\date{}
\newcommand\blfootnote[1]{%
  \begingroup
  \renewcommand\thefootnote{}\footnote{#1}%
  \addtocounter{footnote}{-1}%
  \endgroup
}
\newcommand{\Comment}[1]{\textbf{[#1]}}
\renewcommand{\Comment}[1]{}
\newcommand{\Hide}[1]{}
\newtheorem{theorem}{\bf Theorem}[section]
\newtheorem*{theorem*}{\bf Theorem}
\newtheorem{conjecture}[theorem]{\bf Conjecture}
\newtheorem{definition}[theorem]{\bf Definition}
\newtheorem{fact}[theorem]{\bf Fact}
\newtheorem{claim}[theorem]{\bf Claim}
\newtheorem{lemma}[theorem]{\bf Lemma}
 \title{Notes on embedding   trees in graphs with $O(|T|)$-sized covers}
\date{}
\author{
Alexey Pokrovskiy}
\begin{document} 
\maketitle
\begin{abstract}
This is a companion paper to the paper ``Hyperstability in the Erd\H{o}s-S\'os Conjecture''. In that paper the following rough structure theorem was proved for graphs $G$ containing no copy of a bounded degree tree $T$: from any such $G$, one can delete $o(|G||T|)$ edges in order to get a subgraph each of whose connected components have a vertex cover of order $3|T|$.

This theorem creates an incentive for studying graphs whose connected components have covers of order $O(|T|)$ --- and this is what will be explored here. It turns out that such graphs are amenable to regularity approaches which have been successful in studying dense  $T$-free graphs. In this paper we will follow such an approach from the paper ``On the Erd\H{o}s–S\'os conjecture for trees with bounded degree'' by  Besomi, Pavez-Sign\'e, and Stein and show how it can be adapted from dense graphs to graphs with a small vertex cover.
\end{abstract}
\section{Introduction}
\blfootnote{Department of Mathematics, University College London, Gower Street, London WC1E 6BT, UK. \\ \emph{Email}: \texttt{dralexeypokrovskiy@gmail.com.}}
One of the biggest successes of modern combinatorics has been Szemeredi's Regularity Lemma --- a tool which gives a unified approach for studying all sorts of questions about dense graphs. Here ``dense graph'' means a graph $G$ where $e(G)=\Omega(|G|^2)$.  This lemma provides a way to approximate large dense graphs by small weighted graphs.
A notable example where Szemerédi’s Regularity Lemma plays a key role is in the context of the Erd\H{o}s-S\'os Conjecture. This conjecture predicts  what the Tur\'an number of trees should be: 
\begin{conjecture}[Erd\H{o}s, S\'os, see \cite{erdos1964extremal}]\label{conj:Erdos_Sos}
Every graph $G$ with $> (d-1)n/2$ edges contains a copy of every $d$-edge tree $T$.
\end{conjecture}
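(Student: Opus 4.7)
The plan is to combine the hyperstability theorem of the companion paper with a regularity-style embedding inside the small-cover reduct. Suppose $G$ has $e(G) > (d-1)n/2$ and is $T$-free for some $d$-edge tree $T$. The hyperstability theorem lets us delete $o(n|T|)$ edges to obtain a subgraph $G'$ each of whose components has a vertex cover of size at most $3|T|$. Since $e(G') \geq (d-1)n/2 - o(n|T|)$, an averaging argument produces a component $C$ with a vertex cover $W$ of order $O(|T|)$ whose average degree is essentially $d-1$. This reduces the task to embedding $T$ in a connected graph $C$ whose edges each have at least one endpoint in the $O(|T|)$-sized set $W$.

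Next I would adapt the Besomi--Pavez-Sign\'e--Stein embedding scheme from the dense setting to the small-cover setting. Classify vertices of $T$ as \emph{heavy} (degree at least some threshold $\tau=\tau(|T|)$) or \emph{light}; since $I:=V(C)\setminus W$ is independent, any vertex of $T$ with more than $|W|$ neighbours in $T$ has to be embedded into $W$. For the heavy skeleton one finds a matching-plus-path cover inside $W$, then the light pendant subtrees are extended greedily into $I$ using that each $w\in W$ has sufficient degree to $I$. Regularising the bipartite graph between $W$ and $I$ yields a reduced graph in which $W$ itself plays the role of a built-in ``backbone'', replacing the dense reduced graph used in the original Besomi--Pavez-Sign\'e--Stein argument.

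The hard part --- and the reason the conjecture remains open --- is matching the bound $(d-1)n/2$ \emph{exactly}. The hyperstability reduction loses $o(n|T|)$ edges and produces a cover of size $3|T|$ rather than $|T|$, so one effectively works with density $(d-1)/2 - o(1)$, not the tight density. Closing this slack would require a sharp extremal analysis near the conjectured tight configurations (disjoint unions of $K_d$ and near-balanced bipartite graphs), together with a separate treatment of trees with unbounded maximum degree, where the regularity framework does not apply directly: high-degree vertices of $T$ must be placed inside $W$, and embedding them there amounts to a recursive ``inner'' Erd\H{o}s--S\'os problem on a set of size $O(|T|)$. Without these additional ingredients the strategy above yields only an asymptotic version of the conjecture for bounded-degree trees, which is essentially the content of the present paper.
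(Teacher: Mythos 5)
The statement you were asked about is the Erd\H{o}s--S\'os Conjecture itself, which the paper states as an open conjecture and does not prove; there is no ``paper proof'' to match, so the only question is whether your argument actually establishes the statement --- and it does not, as you yourself concede in your final paragraph. The concrete gaps are exactly the ones you name, and they are fatal to the claim as stated: the hyperstability reduction (Theorem~\ref{main_theorem}) only applies to trees with $\Delta(T)\leq\Delta$ and $d$ sufficiently large, so trees of unbounded maximum degree (e.g.\ stars and spiders, for which the conjectured bound is tight) are entirely outside the scope of your strategy; and the deletion of $\varepsilon e(G)$ edges plus the cover of size $3d$ (or even $(1+\varepsilon)d$ via Theorem~\ref{optimal_theorem}) degrades the hypothesis $e(G)>(d-1)n/2$ to an approximate density statement, so without a genuine stability-plus-extremal analysis you cannot recover the exact threshold. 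A sketch that ends with ``this yields only an asymptotic version for bounded-degree trees'' is an honest assessment, but it is not a proof of the conjecture.

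One point worth correcting in your framing: the exact bounded-degree case is not quite as far out of reach as your last paragraph suggests. The present paper supplies precisely the missing ingredients for that restricted setting --- the stability theorem (Theorem~\ref{thm:stability_main}) saying a $T$-free graph with a $10k$-cover and nearly the extremal number of edges must contain either an almost-complete subgraph on $(1+\alpha)k$ vertices or an almost-complete unbalanced bipartite subgraph, and the two extremal analyses (Theorems~\ref{thm:nonbipartite_extremal} and~\ref{thm:bipartite_extremal}) which embed $T$ whenever such structures are present in a connected graph with suitable minimum degree. Combined with Theorem~\ref{main_theorem}, these give the Erd\H{o}s--S\'os Conjecture exactly for large trees of bounded degree, with the proof carried out in the companion paper~\cite{hyperstability}. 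So the route you outline (hyperstability reduction, then a Besomi--Pavez-Sign\'e--Stein-style embedding in the small-cover graph, then stability and extremal case analysis to remove the slack) is the intended one; what you have written is a roadmap to that bounded-degree result, not a proof of Conjecture~\ref{conj:Erdos_Sos}, which remains open in general.
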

There are several different constructions which are extremal and near-extremal for this conjecture. One is to consider $n/d$ vertex-disjoint complete subgraphs of order $d$ --- this has $(n/d)\binom {d}2=(d-1)n/2$ but no copies of $T$ (since the connected components don't have enough vertices to house $T$). If $T$ is a star, then any $(d-1)$-regular graph contains no $T$ and has $(d-1)n/2$ edges.  If $T$ is balanced (i.e. both parts of its bipartition have order $(d+1)/2$) then consider $G$ be a graph containing $(d-1)/2$ vertices which are joined to everything, and no other edges. Then $e(G)=n(d-1)/2-(d-1)(d-5)/8$, which, for large $n$, shows that $G$ is near-extremal for the Erd\H{o}s-S\'os Conjecture.

Much is known about the conjecture. In 1959, predating the conjecture, Erd\H{o}s and Gallai proved it for paths. For stars, the conjecture is an easy exercise. There are more early results about specific classes of trees~\cite{borowiecki1993erdos}, and about $n=d+1$~\cite{zhou1984note}. 
In the early 1990s, Ajtai, Komlós, Simonovits, and Szemerédi announced a complete proof of the Erdős-Sós Conjecture for sufficiently large $d$. The statement of their result is as follows:
\begin{theorem}[Ajtai, Koml\'os, Simonovits, and Szemer\'edi]  For sufficiently large $d$, every graph $G$ with $> (d-1)n/2$ edges contains a copy of every $d$-edge tree $T$.
\end{theorem}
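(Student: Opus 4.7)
The only viable approach at this edge density is the regularity method. The plan is to apply Szemer\'edi's Regularity Lemma to $G$ with some $\varepsilon \ll 1/d$, obtain a partition $V_0, V_1, \ldots, V_k$ with $|V_i| = m$ for $i \geq 1$, and build the reduced graph $R$ whose vertices are the clusters and whose edges correspond to $\varepsilon$-regular pairs of density $\geq \delta$ for some small $\delta > 0$. A routine count --- using that irregular pairs, sparse pairs and within-cluster edges together carry only $o(dn)$ edges --- transfers the hypothesis $e(G) > (d-1)n/2$ into a lower bound on the weighted average degree of $R$ which, after rescaling by $m$, is essentially $d$. In particular the average cluster-degree in $R$ is $\gtrsim d/m$.

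Before invoking $R$, two preprocessing steps are convenient. First, dispense with the star case $T = K_{1,d}$ directly: $e(G) > (d-1)n/2$ forces some vertex of $G$ to have degree $\geq d$, into which $T$ embeds trivially. Second, iteratively delete vertices of $G$ of degree $\leq d/2$; each deletion removes at most $d/2$ edges, which preserves the inequality $e(G') > (d-1)|G'|/2$, so one can induct on $|G|$ unless the process terminates with $\delta(G) > d/2$. This minimum-degree reduction ensures that the reduced graph $R$ inherits strong connectivity properties and that all embedding targets in $G$ are ``thick''.

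The core argument is then a structural dichotomy on $R$. In one case, $R$ contains either a long path of length $\gtrsim d/m$, or a matching of size $\gtrsim d/(2m)$ together with a connecting edge; split $T$ along a centroid edge into two roughly balanced rooted subtrees and embed $T$ greedily along this skeleton in $R$, using the defect form of regularity at each step to guarantee that every vertex of $T$ finds an unexhausted neighbour in the appropriate cluster. In the opposite case, $R$ decomposes, after removing $o(k)$ edges, into near-cliques on $\leq d/m$ clusters; the hypothesis $e(G) > (d-1)n/2$ then forces one such near-clique to correspond to a $G$-component on $\geq d+1$ vertices, and $T$ embeds into this component using the small-cover tree-embedding technology developed in the body of the paper.

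The main obstacle, as always with Erd\H{o}s--S\'os, is the tightness of the extremal bound: preprocessing cannot afford to lose a constant fraction of edges, and the ``clique-like'' case must succeed exactly at the threshold $d+1$, with no slack. Technically, the greedy embedding in the ``connected'' case is the hardest step --- it requires sequencing the vertices of $T$ (typically via a BFS from a centroid, placing high-degree branching vertices first) and a careful defect-regularity argument to avoid exhausting any cluster prematurely. Handling the near-balanced-tree extremum, where $G$ contains a small ``universal'' set of $(d-1)/2$ high-degree vertices, requires locating and removing this set before the regularity partition, since it otherwise dominates the degree sequence and forces $R$ to look unhelpful.
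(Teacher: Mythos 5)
There is nothing in the paper to compare your proposal against: this theorem is quoted as an \emph{announced} result of Ajtai, Koml\'os, Simonovits, and Szemer\'edi, and the paper explicitly states that its proof has never appeared. So what you have written cannot be measured against ``the paper's proof''; it has to stand on its own as a proof of a statement that has resisted publication for three decades, and it does not. Your very first step already fails in the regime the theorem actually covers. The statement allows $n$ to be arbitrarily large relative to $d$, i.e.\ $G$ may be sparse, with density $\Theta(d/n) = o(1)$. The standard Regularity Lemma discards $\Theta\bigl((\varepsilon+\delta)n^2\bigr)$ edges (irregular pairs, pairs of density below $\delta$, the exceptional set), and this is \emph{not} $o(dn)$ unless $n = O(d)$; for $n \gg d$ every pair is vacuously $\varepsilon$-regular of density below $\delta$ and the reduced graph is empty. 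This is exactly why the announced AKSS argument is reported to use a regularity lemma adapted to sparse graphs, and why the present paper instead routes everything through Theorem 1.3 and cut-density of the small-cover pieces. Separately, taking $\varepsilon \ll 1/d$ makes the number of clusters a tower-type function of $d$, so ``average cluster-degree $\gtrsim d/m$'' is not a usable quantity.

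Beyond that, the two concrete steps you do spell out have errors, and the core step is missing entirely. Deleting a vertex of degree $\leq d/2$ removes up to $d/2$ edges while the threshold $(d-1)n/2$ drops by only $(d-1)/2$, so your induction does not close; the correct reduction deletes vertices of degree $\leq (d-1)/2$ (and even then one only gets $\delta(G) \geq d/2$, not the strong connectivity you invoke). The ``structural dichotomy on $R$'' and the ``greedy embedding along the skeleton'' are the entire content of the theorem: embedding an arbitrary tree --- with no bound on $\Delta(T)$ --- into a regular pair structure exactly at the threshold is precisely what is open. The published results your sketch implicitly leans on (Rozho\v{n}; Besomi, Pavez-Sign\'e, and Stein) require bounded degree and, in the approximate versions, extra slack in the edge count; the recent unrestricted-degree result of Davoodi et al.\ is approximate and for dense graphs. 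Your final paragraph correctly names the obstacles (no slack at the threshold, the near-balanced-tree extremal configuration, exhaustion of clusters) but resolves none of them. As written this is a restatement of the known proof \emph{strategy}, not a proof.
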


Although the proof of this theorem has not yet appeared, several details about its approach are known. The proof strategy involves an adapted version of Szemer\'edi's Regularity Lemma, tailored to sparse graphs, and a careful embedding of parts of the tree into regular pairs of the graph. This method is supplemented by stability and extremal case analysis to achieve an exact result. Over the intervening decades, interest in replicating or extending this proof has grown, particularly given that the Erd\H{o}s-S\'os Conjecture is not an isolated problem but has numerous variants (see surveys like~\cite{stein2020tree}).

For dense graphs, significant progress has been made on understanding the conjecture and the techniques of Ajtai, Koml\'os, Simonovits, and Szemer\'edi. Rozho\v{n}~\cite{rozhon2019local} and Besomi, Pavez-Sign\'e, and Stein~\cite{besomi2021erdHos} independently proved approximate versions of the conjecture for trees with bounded maximum degree. Moreover, Besomi, Pavez-Sign\'e, and Stein\cite{besomi2021erdHos} extended their work to an exact result for dense graphs when the tree has bounded degree. Very recently, a proof of the approximate version of the conjecture has been announced for dense graphs without any degree restrictions by Davoodi, Piguet, {\v{R}}ada,  and Sanhueza-Matamala~\cite{davoodi2023beyond}. See the survey~\cite{stein2020tree} for a more detailed history of the conjecture.

From the above we see that we have a pretty good understanding of the conjecture of dense graphs. But sparse graphs, our understanding of how to apply the techniques of Ajtai, Koml\'os, Simonovits, and Szemer\'edi remains incomplete. 
The author recently proved the following theorem which is tailored specifically to understanding the sparse case of the Erd\H{o}s-S\'os Conjecture and its relatives. Recall that a cover in a graph is a set of vertices $C$ which has the property that all edges have one or both of their vertices in $C$.
\begin{theorem}[\cite{hyperstability}]\label{main_theorem} Let $\Delta \in \mathbb{N}$ and $\varepsilon > 0$, and let $d$ be sufficiently large. Let $T$ be a tree with $d$ edges and $\Delta(T) \leq \Delta$. For any graph $G$ with $e(G) \geq \varepsilon d |G|$ having no copies of $T$, it is possible to delete $\varepsilon e(G)$ edges to obtain a graph $H$, each of whose connected components has a cover of order $\leq 3d$. \end{theorem}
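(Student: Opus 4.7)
The plan is to argue by contradiction: suppose no deletion of $\varepsilon e(G)$ edges yields the desired component structure, and use this to build a copy of $T$ in $G$, contradicting $T$-freeness.

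The first step is a controlled clean-up of $G$. I would iteratively delete edges lying in ``sparse pockets'' (small vertex sets $S$ on which $e(S) \ll \varepsilon d |S|$) as well as edges incident to unusually low-degree vertices (degree below, say, $\varepsilon^2 d$). A direct counting argument caps this deletion at $\tfrac{1}{2}\varepsilon e(G)$ edges. In the residual graph $G_0$, every non-trivial component is ``density-homogeneous'' in the sense that every sub-part on at least $d$ vertices has average degree $\gtrsim \varepsilon d$. By assumption some component $C$ of $G_0$ still satisfies $\tau(C) > 3d$, and then the K\"onig-type bound $\tau \leq 2\nu$ yields a matching $M \subseteq C$ of size $> 3d/2$.

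The heart of the proof is an embedding lemma: any density-homogeneous connected graph with matching number $> 3d/2$ contains every tree $T$ with $d$ edges and $\Delta(T) \leq \Delta$. I would prove it by rooting $T$ at a vertex $r$, processing $V(T)$ in BFS order, and greedily extending a partial embedding into $C$. At each step the current tree-vertex $v$ is placed at an unused neighbour of its already-embedded parent; the bounded degree of $T$ means each extension consumes only $O(\Delta)$ fresh vertices, while the supply of $M$ together with the connectivity (and density-homogeneity) of $C$ guarantees enough unused neighbours to draw from throughout. Applying this lemma to $C$ exhibits a copy of $T$ in $G_0 \subseteq G$, the desired contradiction.

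The main obstacle is the embedding lemma, particularly the case in which the ``current'' parent vertex in the partial embedding has exhausted its free neighbours. The rescue I would attempt is to use unused matching edges of $M$ as anchors, re-routing the embedding through a short path into a previously untouched region of $C$; density-homogeneity keeps these re-routing paths of bounded length, so the overhead per relocation is controlled. The factor $3$ in $\tau(C) > 3d$ reflects the resulting budget: roughly $d$ vertices to host $T$ itself, $d$ matching-vertices to serve as anchors, and a further $d$ of slack to absorb the re-routing overhead.
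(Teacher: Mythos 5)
This statement is not proved in the present paper at all: it is imported from the companion paper \cite{hyperstability} and used here strictly as a black box, so there is no internal proof to compare against. Judged on its own terms, your proposal has a genuine gap at its core.

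The gap is the ``embedding lemma'': that a connected, density-homogeneous graph with matching number $>3d/2$ contains every bounded-degree $d$-edge tree. This is essentially the entire theorem restated, and neither the statement nor the sketched proof holds up. A greedy BFS embedding of a $d$-edge tree requires the host graph to have minimum degree on the order of $d$ at every step (cf.\ Fact~\ref{fact:greedy_tree_embedding}); after your clean-up you only have minimum degree $\varepsilon^2 d$, so the greedy process stalls after embedding $O(\varepsilon^2 d)$ vertices, and a large matching does not rescue it --- a matching certifies many disjoint edges, not the branching structure a tree needs. The ``re-routing through unused matching edges'' step is not a legitimate operation on a partial tree embedding: once a tree vertex is placed, you cannot relocate its unembedded children to a distant region of $C$ without inserting a connecting path, which changes the tree being embedded. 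Concretely, a long chain of cliques of order $\varepsilon d$ joined by single edges is connected, has minimum degree $\approx \varepsilon d$ and enormous matching and cover numbers, yet contains no complete binary tree on $d$ edges (any embedding would force the tree to decompose along a path of $\le\varepsilon d$-sized pieces, which a binary tree does not admit); your clean-up would have to sever all the connector edges to avoid this configuration, and nothing in the proposal shows such a decomposition exists in general with only $\varepsilon e(G)/2$ deletions. Relatedly, the clean-up itself cannot deliver ``every sub-part on $\ge d$ vertices has average degree $\gtrsim \varepsilon d$'' in a sparse graph: that is a quasirandomness-type condition, and the only decomposition tools of this flavour (e.g.\ the cut-density decomposition, Lemma~\ref{lem:cut_dense_decomposition}) cost $\Theta(n^2)$ edges and are therefore confined to the dense regime. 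Converting ``no small cover after any small deletion'' into a tree embedding in a sparse graph is precisely the hard content of \cite{hyperstability}, and it is not recoverable by greedy arguments plus K\H{o}nig--Gallai.
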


This theorem allows us to analyse the Erd\H{o}s-S\'os Conjecture even in the sparse case by reducing the problem to studying graphs with small covers.  While graphs with covers of order $\le 3d$ are not necessarily dense, they still turn out to be amenable to various techniques used for studying dense graphs. See Sections~1.1 -- 1.3 of~\cite{hyperstability} for some simple examples of this. The current paper is concerned with giving more complicated ways of studying graphs  with small covers --- in particular studying them using Szemer\'edi's Regularity Lemma. 
 Our results and their proofs are all heavily inspired by the paper of Besomi, Pavez-Sign\'e, and Stein~\cite{besomi2021erdHos} where the same sorts of results were proved for dense graphs. Our first result is a stability theorem --- a theorem which says that if $G$ has slightly fewer edges than the Erd\H{o}s-S\'os Conjecture asks for, then $G$ is close in structure to one of the near-extremal constructions in that conjecture. 
\begin{theorem}[Stability theorem]\label{thm:stability_main}
Let $\Delta\gg  \alpha \gg \varepsilon\gg  k^{-1}$.
Let $T$ be a $k$-vertex tree with $\Delta(T)\leq \Delta$.
Let $G$ be a graph with $e(G)\geq (1-\varepsilon)kn/2$, and let $D$ be a cover   of order $|D|\leq 10k$. If $G$ has no copy of $T$, then there is a subgraph $H$ of $G$ 
satisfying one of:
\begin{enumerate}[(i)]
\item $\delta(H)\geq (1-\alpha)k$ and $|H|\leq (1+\alpha)k$.
\item  $H$ is bipartite with parts $X,Y$ such that $|X|\leq (1+\alpha)k/2$, $|Y|\geq 6k$, $\delta(X)\geq (1-\alpha)|Y|$, $\delta(Y)\geq (1-\alpha)k/2$.
\end{enumerate}
\end{theorem}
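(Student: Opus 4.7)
The plan is to adapt the stability argument of Besomi, Pavez-Signé, and Stein to the small-cover setting. Since $|D| \le 10k$, we have $e(G[D]) \le \binom{10k}{2} = O(k^2)$, so all but $O(k^2)$ edges of $G$ lie in the bipartite graph between $D$ and $V \setminus D$. In the regime $n \ge Ck$ for a large constant $C = C(\alpha,\varepsilon)$, this bipartite graph carries essentially all edges of $G$; the complementary regime $n \le Ck$ reduces $G$ to a dense graph on $O(k)$ vertices and is handled directly by the dense stability theorem of Besomi--Pavez-Signé--Stein. In the main regime, I first apply Szemer\'edi's regularity lemma to $V \setminus D$ to get $\varepsilon$-regular clusters $U_1, \ldots, U_m$ with well-defined densities $d(v, U_i)$ for every $v \in D$, discarding the negligible contribution of edges in irregular or low-density pairs.

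The key dichotomy is on the set $X_0 := \{v \in D : \deg_G(v) \ge n - n/(100k)\}$ of strongly near-dominating cover vertices. The common neighborhood $N$ of any $X \subseteq X_0$ in $V \setminus D$ has size at least $n - |X|\cdot n/(100k) \ge 9n/10$, and the bipartite graph between $X$ and $N$ is complete. Hence if $|X_0| \ge \lceil k/2 \rceil + 1$, we embed every $k$-vertex tree $T$ by placing its smaller bipartition class in $X_0$ and its larger class in $N$, contradicting $T$-freeness. Thus $|X_0| \le \lceil k/2 \rceil \le (1+\alpha)k/2$, where the second inequality uses $\alpha \gg k^{-1}$.

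If $|X_0| \ge (1-\alpha)k/2$, take $X := X_0$ and $Y \subseteq V \setminus D$ to be the common neighborhood of $X$, after slightly thinning it so that $\deg_Y(v) \ge (1-\alpha)|Y|$ for every $v \in X$. The bound $|Y| \ge 6k$ follows from $|Y| \ge 9n/10$ and $n \ge Ck$; and $\delta(Y) \ge |X| \ge (1-\alpha)k/2$ holds because every $y \in Y$ is adjacent to every vertex of $X$ by construction. This verifies case (ii).

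The main technical obstacle is the case $|X_0| < (1-\alpha)k/2$, which must yield case (i). Here the edge lower bound $e(G) \ge (1-\varepsilon)kn/2$ combined with the scarcity of near-dominating cover vertices forces the edges to accumulate in a localized dense configuration: concretely, I would argue for the existence of a regular pair $(A, U)$ with $A \subseteq D$, $U$ a cluster in $V \setminus D$, $|A| \approx k$, and density close to $1$. The tree-embedding lemma of Besomi, Pavez-Signé, and Stein --- adapted so that one side of each regular pair sits in the cover $D$ --- then forces $G[A]$ to be close to a clique (else $T$ embeds, using the high-density bipartite pair to accommodate the larger bipartition class of $T$ and $G[A]$ for the smaller class). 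The resulting near-clique $G[A]$ provides the $H$ of case (i).
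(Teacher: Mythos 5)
There is a genuine gap, and it sits at the heart of your main regime: the dichotomy on $X_0:=\{v\in D:\deg_G(v)\ge n-n/(100k)\}$ is miscalibrated. Test it on the very configuration that case (ii) is designed to detect: $G=K_{X,Y}$ complete bipartite with $|X|=k/2$, $|Y|=n-k/2$, and $D=X$ as the cover. Every vertex of $X$ has degree $n-k/2$, so it belongs to $X_0$ only when $k/2\le n/(100k)$, i.e.\ $n\ge 50k^2$. Throughout the range $Ck\le n\ll k^2$ (well inside your "main regime"), $X_0=\emptyset$, so this graph lands in your final case $|X_0|<(1-\alpha)k/2$ --- the case you assert "must yield case (i)". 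But no subgraph $H$ of this $G$ satisfies (i): since $H$ is bipartite with one side inside $X$, we get $\delta(H)\le|H\cap X|\le k/2<(1-\alpha)k$. So the claimed conclusion of your final case is false, and the structure you sketch there (a set $A\subseteq D$ with $|A|\approx k$ and $G[A]$ near-complete) cannot exist, as here $|D|=k/2$ and $G[D]$ is empty. The final case must be able to output structure (ii), and it is in any event only a statement of intent rather than an argument.

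The underlying issue is that "domination" has to be measured against the neighbourhood scale of the relevant dense piece of $G[D,V\setminus D]$, not against $n$. That is what the paper does: it first decomposes $G[D,V\setminus D]$ into vertex-disjoint pieces $G_i$, each containing a cut-dense core $D_i\supseteq V(G_i)\cap D$ (Lemma~\ref{lem:cut_dense_decomposition_of_dominated_graphs2}), selects a piece of average degree $\ge(1-o(1))k$, and then (Lemma~\ref{lem:stability_bipartite}) defines the sets $X'$, $Y'$ by degree conditions relative to that piece, using the regularity-based embedding lemma (Lemma~\ref{lem:regularity_tree_embedding_bipartite}) to cap $|X'|$ at $(1+o(1))k/2$. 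The parts of your proposal that do work --- the greedy embedding when $|X_0|>\lceil k/2\rceil$, and the reduction of the $n=O(k)$ regime to dense stability --- are consistent in spirit with the paper's treatment (Lemma~\ref{lem:stability_nonbiparite_no_cut_dense}), but the central dichotomy needs to be rebuilt around a piece-relative notion of degree before the argument can go through.
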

Stability theorems are often paired with extremal case analysis results that prove Conjecture~\ref{conj:Erdos_Sos} when $G$ contains contains the structure provided by the stability theorem. The first of these we prove deals with when $G$ has an almost-complete subgraph as provided by (i) above. 

\begin{theorem}[Almost-complete extremal analysis]\label{thm:nonbipartite_extremal}
Let $\Delta^{-7}/1000\geq \varepsilon\ge 3k^{-1}$.
Let $G$ be a connected graph with $|G|\geq (1+256\Delta^2\sqrt{\varepsilon})k$, $\delta(G)\geq 128\Delta^2\sqrt{\varepsilon} k$ containing a subgraph $K$ with $|K|\leq (1+\varepsilon)k$, $\delta(K)\geq (1-\varepsilon)k$. Then $G$ contains a copy of every $k$-edge tree having $\Delta(T)\leq \Delta$.
\end{theorem}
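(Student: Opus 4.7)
The plan is to embed the bulk of $T$ greedily into the nearly-complete subgraph $K$, and to handle a small residual part of $T$ by combining the leftover vertices of $K$ with the extra vertices of $V(G)\setminus V(K)$, whose abundance and connectivity to $K$ are guaranteed by $|G|-|K|\gtrsim \Delta^2\sqrt{\varepsilon}\,k$ and $\delta(G)\gtrsim \Delta^2\sqrt{\varepsilon}\,k$.

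First I would apply the standard structural dichotomy for bounded-degree trees: either $T$ has at least $k/(4\Delta)$ leaves, or $T$ contains a family of vertex-disjoint bare paths (paths whose interior vertices have degree~$2$ in $T$) with at least $k/(4\Delta)$ interior vertices in total. In the first case, let $L$ be a set of $k/(4\Delta)$ leaves and set $T_0:=T-L$, which is still a tree. In the second case, let $L$ be the set of interior vertices of the bare paths and let $T'$ be the tree obtained from $T$ by contracting each bare path to a single edge. Either way the resulting object has at most $k+1-k/(4\Delta)<(1-\varepsilon) k$ vertices.

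Next I would embed $T_0$ (respectively $T'$) into $K$ in BFS order, greedily placing each child in a free neighbor of the embedded parent. At each step the parent has $\ge(1-\varepsilon)k$ neighbors in $K$, at most $|T_0|-1$ of which are used, leaving $\ge k/(4\Delta)-\varepsilon k\gg\Delta$ free neighbors. Since each vertex of $T$ has at most $\Delta$ children, this embedding succeeds, and the unused portion $U\subseteq V(K)$ satisfies $|U|\ge k/(4\Delta)+\varepsilon k-1$, with every vertex of $U$ adjacent to all but $\varepsilon k$ of the rest of $K$.

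Finally I would reattach $L$. In the leafy case this is a Hall's theorem computation: each parent has $\ge|U|-\varepsilon k-\Delta$ free neighbors in $U$, and at most $\Delta$ leaves compete per parent, so the required matching from leaves to $U$ exists. In the bare-paths case, each deleted bare path of length $\ell$ with endpoints $x,y$ must be realised by a fresh $\phi(x)$--$\phi(y)$ path of length $\ell$ in $G$ using vertices outside those already placed; since the common neighborhood of any two vertices of $K$ has size $\ge|K|-2\varepsilon k$, most interior path-vertices can be placed inside $U$. The \emph{hard part} is the bare-paths case, because long bare paths (for instance if $T$ is itself close to a path) exhaust $U$ and must be routed through $V(G)\setminus V(K)$. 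For this I plan to use $\delta(G)\ge 128\Delta^2\sqrt{\varepsilon}k$ together with connectivity of $G$ to locate short detours into $V(G)\setminus V(K)$, ordered carefully (short paths first, adjusting lengths by inserting common-neighbor detours) so that the modest external reservoir of size $\Omega(\Delta^2\sqrt{\varepsilon}k)$ suffices for the $O(k/\Delta)$ vertices which must leave $K$; the numerical matching here is precisely what forces the parameter choice $\varepsilon\le \Delta^{-7}/1000$.
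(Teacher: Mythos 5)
Both branches of your dichotomy have genuine gaps, and the second one hides the entire difficulty of the theorem. In the leafy case your count of the leftover set $U$ is wrong: the hypotheses only guarantee $|K|\geq(1-\varepsilon)k+1$ (you appear to have used the \emph{upper} bound $(1+\varepsilon)k$ as a lower bound), so $|U|=|K|-|T_0|\geq k/(4\Delta)-\varepsilon k$, not $k/(4\Delta)+\varepsilon k-1$. Since you have $k/(4\Delta)$ leaves to place, $U$ can be too small by $\varepsilon k$ vertices and no matching into $U$ can exist; when $|K|<k+1$ some vertices of $T$ \emph{must} be embedded outside $K$, yet your leaf case never touches $V(G)\setminus V(K)$. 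In the bare-paths case the plan is unexecuted, and the obstruction it would run into is structural: beyond connectivity, the hypotheses give no control over how $V(G)\setminus V(K)$ attaches to $K$. A vertex outside $K$ may have all of its $128\Delta^2\sqrt{\varepsilon}k$ neighbours outside $K$, and the outside may be joined to $K\cup A$ by a single cut edge, so a bare path that detours out of $K$ cannot come back and ``inserting common-neighbour detours'' is unavailable. The numerics are also off: the guaranteed reservoir is only $|G|-|K|\geq 255\Delta^2\sqrt{\varepsilon}k$, which for $\varepsilon$ near its lower bound $3k^{-1}$ is $O(\Delta^2\sqrt{k})$, far smaller than the $k/(4\Delta)$ vertices your decomposition displaces.

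The paper's proof sidesteps all of this with a much lighter decomposition. Lemma~\ref{lem:split_tree_by_edge} splits $T$ along a single edge $sr$ into a small subtree $S$ with $|S|\in[4\Delta\varepsilon k,12\Delta^2\varepsilon k]$ and a large subtree $R$. Then $R$ alone is embedded greedily into $K$ (it fits since $|R|\leq k+1-4\Delta\varepsilon k\leq\delta(K)$), and only $S$ --- one connected piece, attached to $R$ by one edge --- is placed outside: either in the dense bipartite graph between $K$ and the set $A$ of external vertices with many neighbours in $K$, or, if $A$ is small, entirely inside $G\setminus(V(K)\cup A)$, whose minimum degree remains above $|S|$. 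Hanging a single small subtree off a single connecting edge is exactly what survives the single-cut-edge scenario; re-routing many scattered leaves or bare paths does not. To salvage your approach you would need to arrange that the part of $T$ sent outside $K$ is one subtree attached at one vertex, at which point you have essentially rediscovered the paper's argument.
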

The second extremal case analysis result that we prove deals with the case when $G$ has an almost-complete bipartite subgraph as provided by (ii) of Theorem~\ref{thm:stability_main}.

\begin{theorem}[Almost-complete bipartite extremal analysis]\label{thm:bipartite_extremal}
Let $\Delta^{-7}/1000\geq \varepsilon\ge k^{-1}$.
Let $T$ be a $k$-edge tree with $\Delta(T)\leq \Delta$.
Let $G$ be a connected graph with $\delta(G)\geq k/2$. Suppose that $G$ contains a bipartite subgraph $B$ with parts $X, Y$ such that $|X|\leq (1+\varepsilon)k/2$, $|Y|\geq 6k$, $\delta_B(X)\geq (1-\varepsilon)|Y|$, $\delta_B(Y)\geq (1-\varepsilon)k/2$. 
Then $G$ contains a copy of $T$.
\end{theorem}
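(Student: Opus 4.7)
Take the bipartition $V(T) = T_1 \sqcup T_2$ with $|T_1| \le |T_2|$, so $|T_1| \le \lceil (k+1)/2 \rceil$ and $|T_2| \le k$. The plan is to embed $T_1 \to X$ and $T_2 \to Y$, using the fact that $|Y| \ge 6k$ gives comfortable room on the $Y$-side and that the near-completeness of $B$ makes nearly any required adjacency available. The embedding proceeds in three phases: reserve a handful of leaves of $T$ lying in $T_1$, embed the rest via a greedy BFS, and finish by matching the reserved leaves via Hall.

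First I fix a set $L\subseteq T_1$ of leaves of $T$ (whose unique $T$-neighbour lies in $T_2$) with $|L|$ chosen so that $|T_1\setminus L|\le (1-3\varepsilon)k/2$. Next I root $T$ at a non-leaf vertex of $T_2$ and process $V(T\setminus L)$ in BFS order; each new vertex $v$ is mapped to an unused $B$-neighbour of the image of its parent, into $Y$ if $v\in T_2$ and into $X$ if $v\in T_1\setminus L$. This greedy step is feasible throughout, because on the $Y$-side every $X$-vertex has $\ge (1-\varepsilon)|Y| - |T_2|\ge 5k$ available $B$-neighbours in $Y$, and on the $X$-side every $Y$-vertex has $\ge (1-\varepsilon)k/2 - |T_1\setminus L|\ge \varepsilon k$ available $B$-neighbours in $X$. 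Finally, I embed $L$ using Hall's theorem: for each $\ell\in L$ whose parent is at $y_\ell\in Y$ define $A_\ell = N_B(y_\ell)\cap (X\setminus\phi(T_1\setminus L))$ and seek an SDR. Hall's condition is verified by a double-count, since each $y_\ell$ has at most $|X|-(1-\varepsilon)k/2\le \varepsilon k$ non-$B$-neighbours in $X$, so at most $\varepsilon k$ vertices of the available $X$ are missed by every $A_\ell$ in a given subfamily.

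The \emph{main obstacle} is that for the above three-phase scheme to close up one needs $|X|\ge |T_1|+\Omega(\varepsilon k)$, but $|T_1|$ can be as large as $\lceil(k+1)/2\rceil$ while $|X|$ is only guaranteed to be $\ge (1-\varepsilon)k/2$. When $T$ has a balanced bipartition and $|X|<|T_1|$, some of $T_1$ must be embedded outside $X$. The hypothesis $\delta(G)\ge k/2$ saves the day: since $|X|<k/2$, every $Y$-vertex has $\ge k/2-|X| = \Omega(\varepsilon k)$ $G$-neighbours outside $X$ (inside $Y$ itself or in $V(G)\setminus (X\cup Y)$), supplying a reservoir into which the surplus $|T_1|-|X|+O(\varepsilon k)$ leaves of $T_1$ (chosen carefully and matched via a secondary Hall argument) can be overflowed. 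The genuinely hard sub-case is when $T$ is close to a path: then $T_1$ has too few leaves for this reservoir-matching, and one instead has to embed long bare paths of $T$ by tracing long alternating paths in $B$ that occasionally pick up an edge inside $Y$ (or make a detour through $V(G)\setminus (X\cup Y)$) to save an $X$-slot. Interleaving this path-detouring construction with the BFS-plus-Hall scheme above is what I expect to be the technical heart of the proof.
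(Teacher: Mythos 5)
Your overall strategy is the same as the paper's: embed most of $T$ greedily into the near-complete bipartite pair, and compensate for the deficiency $|T_1|-|X|$ (which can indeed be positive, since $|T_1|$ can be $\lceil (k+1)/2\rceil$ while $|X|$ is only $\ge(1-\varepsilon)k/2$) by overflowing some $T_1$-vertices out of $X$, using $\delta(G)\ge k/2$, with a separate treatment of path-like trees via bare paths. You have correctly located the main obstacle. But as written the proposal has a genuine gap: the entire deficiency case is a plan, not a proof, and you explicitly defer its technical heart (``what I expect to be the technical heart of the proof''). Two concrete ingredients are missing. First, your overflow reservoir is quantitatively wrong: when $|X|<k/2$, a vertex of $Y$ has only $\ge k/2-|X|$ $G$-neighbours outside $X$, and this is exactly the deficiency $d$, which can be as small as $1$ --- it is not $\Omega(\varepsilon k)$. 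More importantly, to realise either the leaf-overflow or the bare-path detours you need $\Theta(d)$ \emph{vertex-disjoint} structures (edges or stars) inside the complement of $X$, and extracting these from the minimum-degree condition $\delta\ge d$ requires an argument; the paper does this with a dedicated lemma (minimum degree $d$ in a large graph forces either $d$ disjoint $\Delta$-stars or a matching of size $5d$). Second, the dichotomy this lemma produces creates a case you did not anticipate: if the surplus degree inside $Y$ only yields disjoint \emph{stars} (no large matching), the path-detour idea fails, and the paper instead embeds $d$ whole stars of $T$ centred at $T_1$-vertices into those $Y$-stars. Your many-leaves versus path-like split (the paper's Lemma on leaves or bare paths) is the right second dichotomy, but the bare-path case --- embedding $a\,b\,c\,d\,e$ as $y^-x^-zx^+y^+$ using two matching edges of $Y$ and one common neighbour in $X$, saving one $X$-slot per path --- is exactly the construction you would need to write out and interleave with a uniform completion lemma, which is where all the bookkeeping lives.

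A further point: the paper first enlarges $X$ to $X\cup X'$ and replaces $Y$ by the set $Y'$ of all vertices with $\ge(1-160\Delta^3\varepsilon)k/2$ neighbours in $X$, and splits on whether the residual set $Z=V(G)\setminus(X\cup X'\cup Y')$ is empty. When $Z\neq\emptyset$ the connectivity of $G$ lets one embed a small subtree of $T$ entirely inside $Z$ and attach it across a single edge, which again reduces the $X$-demand; your sketch has no analogue of this case, and without it the claim that every $Y$-vertex sends its surplus degree back into $Y$ (needed for your stars/matching reservoir) does not follow.
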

The combination of these theorems together with Theorem~\ref{main_theorem} allows one to  prove the Erd\H{o}s-S\'os Conjecture for large bounded degree trees. This proof can be found in~\cite{hyperstability}.

Additionally, we prove a stronger version of Theorem~\ref{main_theorem}  which sharpens the constant ``$3$'' to the asymptotically optimal ``$1+\varepsilon$''.
\begin{theorem}\label{optimal_theorem} Let $\Delta, \varepsilon\gg d^{-1}$. Let $T$ be a tree with $k$ edges and $\Delta(T) \leq \Delta$. For any graph $G$ with $e(G) \geq \varepsilon k |G|$ having no copies of $T$, it is possible to delete $\varepsilon e(G)$ edges to obtain a graph $H$, each of whose connected components has a cover of order $\leq (1 + \varepsilon)k$. \end{theorem}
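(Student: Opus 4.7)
The plan is to derive Theorem~\ref{optimal_theorem} by combining Theorem~\ref{main_theorem} with the stability theorem (Theorem~\ref{thm:stability_main}) and the two extremal analyses (Theorems~\ref{thm:nonbipartite_extremal} and \ref{thm:bipartite_extremal}). First I would apply Theorem~\ref{main_theorem} with parameter $\varepsilon/4$ to obtain a subgraph $H_1\subseteq G$ whose components each have a cover of size $\leq 3k$, deleting at most $\varepsilon e(G)/4$ edges. Next I iteratively remove vertices of degree below $\delta_0 k$, where $\delta_0$ is polynomially small in $\varepsilon$ and $\Delta^{-1}$; since this deletes at most $\delta_0 k|G|\leq (\delta_0/\varepsilon)e(G)\leq \varepsilon e(G)/4$ edges (using $e(G)\geq \varepsilon k|G|$), it fits into the budget. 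Call the resulting graph $H_2$; each of its components $C$ satisfies $\delta(C)\geq \delta_0 k$ and inherits a cover of size $\leq 3k$.

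Fix parameters $\alpha\ll \varepsilon$ and $\varepsilon_2\ll \alpha$, chosen so that $256\Delta^2\sqrt{\alpha}\leq\varepsilon$ and $128\Delta^2\sqrt{\alpha}\leq\delta_0$. For each component $C$ of $H_2$: if $|V(C)|\leq(1+\varepsilon)k$ then $V(C)$ itself is the required cover, so assume $|V(C)|>(1+\varepsilon)k$. Provided the edge-density condition $e(C)\geq(1-\varepsilon_2)k|V(C)|/2$ holds, apply Theorem~\ref{thm:stability_main} to $C$ (whose cover is at most $3k\leq 10k$) to obtain either the almost-clique $K$ of (i) or the bipartite subgraph $B$ of (ii). In case (i) the hypotheses of Theorem~\ref{thm:nonbipartite_extremal} are met because $|V(C)|>(1+\varepsilon)k\geq(1+256\Delta^2\sqrt{\alpha})k$ and $\delta(C)\geq \delta_0 k\geq 128\Delta^2\sqrt{\alpha}k$, so $T\subseteq C$, contradicting $T$-freeness; hence case (i) cannot occur. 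In case (ii) the part $X$ has size $\leq(1+\alpha)k/2\leq(1+\varepsilon)k/2$, which is already well inside the target cover budget; I would then argue by embedding arguments analogous to those in the proof of Theorem~\ref{thm:bipartite_extremal} that $Y$ is nearly independent in $C$, that $|V(C)\setminus(X\cup Y)|\leq \varepsilon k/4$, and that $X\cup (V(C)\setminus(X\cup Y))$ is a cover of $C$ of order $\leq(1+\varepsilon)k$ after deleting the few edges witnessing the ``nearly'' qualifiers.

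The two main obstacles are case (ii) and components where the edge-density hypothesis of stability fails. Case (ii) cannot be resolved by a direct application of Theorem~\ref{thm:bipartite_extremal}, since that theorem requires $\delta(C)\geq k/2$, far larger than our $\delta_0 k$; instead, one has to re-run the embedding strategy of that proof using only the local degree conditions from stability to show that any non-trivial excess structure outside $X$ already allows embedding $T$. For components $C$ with $e(C)<(1-\varepsilon_2)k|V(C)|/2$, a direct analysis of the inherited cover $D_C$ is needed: restrict attention to the heavy part $D_C^{\mathrm h}=\{v\in D_C:\deg_C(v)\geq \alpha k\}$, delete the few edges incident to $D_C\setminus D_C^{\mathrm h}$, and use a K\"onig/matching argument combined with $T$-freeness to show $|D_C^{\mathrm h}|\leq(1+\varepsilon)k$. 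The crux of the proof will be the careful global accounting of deletions across these sub-cases within the budget $\varepsilon e(G)$, which is feasible precisely because the hypothesis $e(G)\geq \varepsilon k|G|$ forces sparse components to collectively carry only a small fraction of the edges.
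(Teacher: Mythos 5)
Your route (black-box Theorem~\ref{main_theorem}, then stability plus extremal analysis per component) is genuinely different from the paper's, which instead adds a K\H{o}nig-type regularity argument on top of Theorem~\ref{main_theorem}: it decomposes each component into cut-dense pieces attached to the cover (Lemma~\ref{lem:cut_dense_decomposition_of_dominated_graphs2}), applies the regularity lemma, uses the Besomi--Pavez-Sign\'e--Stein embedding lemma to bound the (fractional) matching number of the reduced graph, and converts that bound into a $(1+\varepsilon)k$-cover via LP duality / K\H{o}nig. Unfortunately your proposal has a genuine gap exactly where that machinery is needed. The stability theorem only applies to components with $e(C)\geq(1-\varepsilon_2)k|C|/2$, i.e.\ components that are already nearly extremal, and your justification for why the remaining components are negligible --- that ``$e(G)\geq\varepsilon k|G|$ forces sparse components to collectively carry only a small fraction of the edges'' --- is false. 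The hypothesis is a global lower bound and says nothing about the distribution of edges among components: if $G$ is a disjoint union of $2\varepsilon k$-regular graphs, then $e(G)=\varepsilon k|G|$ and \emph{every} component has density far below $(1-\varepsilon_2)k/2$ per vertex, so all of the edges land in the case you defer to ``a K\H{o}nig/matching argument combined with $T$-freeness.'' That deferred case is not a boundary case; it is the main content of the theorem, and making the K\H{o}nig/matching argument work there is precisely what the paper's Lemmas~\ref{lem:thm_opt_sparse}--\ref{lem:thm_opt_combine} (via Lemmas~\ref{lem:main_tree_embedding_lemma}, \ref{lem:2k_connected_matching_bound} and \ref{lem:bipartite_regularity_cover}) are for.

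Two further points. First, even in the near-extremal case (ii), you correctly note that Theorem~\ref{thm:bipartite_extremal} is unavailable because it needs $\delta(C)\geq k/2$ while your cleaning step only guarantees $\delta(C)\geq\delta_0 k$ with $\delta_0\leq\varepsilon^2/4$; ``re-running the embedding strategy'' under these much weaker degree conditions is another unproved step of comparable difficulty to the bipartite extremal analysis itself. Second, the stability theorem only produces a subgraph $H\subseteq C$ with the stated structure; it gives no control over the edges of $C$ outside $H$, so showing that $X$ together with a small exceptional set covers all but few edges of the whole component requires an additional argument that the proposal does not supply. In short, the skeleton (use Theorem~\ref{main_theorem} as a black box, then refine the cover) matches the paper, but the refinement step --- the actual theorem --- is missing.
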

This bound can seen to be near-optimal by considering a graph $G$ of vertex disjoint cliques of order $k-1$. This $G$  has no copy of $T$ because its components are too small. But by deleting $\varepsilon e(G)$ edges, at least one of the cliques will have $\geq (1-\varepsilon)\binom{k-1}2$ edges left --- and so has no cover smaller than $(1-2\sqrt{\varepsilon} )k$ (the maximum number of edges that can touch $(1-2\varepsilon )k$ vertices in a subgraph of $K_{k-1}$ is $\binom{k-1}2-\binom{2\sqrt{\varepsilon} k}2< (1-\varepsilon )\binom{k-1}2$).

The proof of this theorem doesn't involve modifying the original proof of Theorem~\ref{main_theorem}. Instead, Theorem~\ref{main_theorem} is used as a black box, and on top of it one adds some regularity-type arguments in the spirit of those used by  Besomi, Pavez-Sign\'e, and Stein.

\section{Preliminary lemmas}
Here we introduce some preliminary lemmas. For convenience, we refer to~\cite{hyperstability} for the proofs of all lemmas that we state without proof (however it is worth emphasising that~\cite{hyperstability} is not the source of any of these lemmas and variants of them are much older).

For a graph $G$ and a set of vertices $S$, we use $G\setminus S$ to   denote $G$ with the vertices of $S$ deleted. For a set of edges $T$,  we use $G\setminus T$ to   denote $G$ with the edges of $T$ deleted.
In particular two graphs $G,H$, we use $G\setminus V(H)$ to denote $G$ with the vertices of $H$ deleted, $G\setminus E(H)$ to denote $G$ with the edges of $H$ deleted, $G\cap H$ to denote the graph with  edge set $E(G)\cap E(H)$ and isolated vertices deleted.
We use $G\setminus H$ to denote the graph with edge set $E(G)\setminus E(H)$ and isolated vertices deleted.
We use $G\cup H$ to denote the graph on $V(G)\cup V(H)$ with edge set $E(G)\cup E(H)$.
For sets, $S,T$ in a graph $G$, we define $\delta_S(T):=\min_{v\in T} |N_G(v)\cap S|$.
Given a family $\mathcal F=\{G_1, \dots, G_t\}$ of graphs, we define the graph $\bigcup \mathcal F:=G_1\cup \dots \cup G_t$. 
An embedding of a graph $G$ into a graph $H$ is an injective homomorphism $f:G\to H$.

For two (possibly overlapping) sets of vertices $S,T$ in a graph $G$, let $e_G(S,T)$ denote the number of edges  in $G$ starting in $S$ and ending in $T$ (counting edges contained in $S\cap T$ only once). Note that for any $S,T$, we have $2e(S,T)\geq \sum_{s\in S} |N(s)\cap (T\cup S)|$, with equality when $S=T$. We will refer to this as the ``handshaking lemma'', since it is a slight variant of the usual lemma.

For non-negative $x,y$, we write $a=x\pm y$ to mean $x-y\leq a\leq x+y$.   
 We use standard notation for hierarchies of constants, writing $x\gg y$ to mean that there is a   function $f : (0,1] \rightarrow (0, 1]$ such that all subsequent statements hold for $x\geq f(y)$.

We'll need the standard fact that every graph $G$ has a subgraph whose minimum degree is at least half the average degree of $G$.
\begin{fact}[\cite{hyperstability}]\label{fact:subgraph_min_degree_e/2}
Every graph $G$ with $e(G)\geq x|G|$ has a subgraph $H$ with $e(H)\geq x|H|$ and minimum degree $\delta(H)\geq \lfloor x\rfloor+1\geq x$.
\end{fact}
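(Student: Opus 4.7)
The plan is to prove this by the standard greedy vertex-deletion procedure: iteratively remove a vertex of small degree until no such vertex remains.

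First I would define a sequence of graphs $H_0 \supseteq H_1 \supseteq \dots$ as follows. Set $H_0 := G$. Given $H_i$, if $H_i$ contains a vertex $v$ with $\deg_{H_i}(v) \leq \lfloor x \rfloor$, then set $H_{i+1} := H_i \setminus \{v\}$; otherwise, stop and set $H := H_i$. Since $|H_{i+1}| = |H_i| - 1$, the process terminates in at most $|G|$ steps.

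Next I would verify, by induction on $i$, that the invariant $e(H_i) \geq x|H_i|$ is preserved throughout the process. Assuming $e(H_i) \geq x|H_i|$, deleting a vertex of degree at most $\lfloor x \rfloor \leq x$ yields
\[
e(H_{i+1}) \geq e(H_i) - \lfloor x \rfloor \geq x|H_i| - x = x(|H_i| - 1) = x|H_{i+1}|,
\]
so the invariant carries over. When the process terminates, the resulting graph $H$ satisfies $e(H) \geq x|H|$ by this invariant and $\delta(H) \geq \lfloor x \rfloor + 1$ by the stopping rule.

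The only thing left to check is that $H$ is nonempty (so that the minimum degree statement is meaningful). If $x \leq 0$ this is immediate since $G$ itself has $\delta \geq 0 \geq x$ and we may take $H = G$ with no deletions (assuming $e(G) > 0$, else the statement is vacuous). If $x > 0$, then $e(H) \geq x|H| > 0$ whenever $|H| \geq 1$, which forces $|H| \geq 2$; and the process cannot reduce $H$ to the empty graph, because the last nonempty graph in the sequence must already satisfy the minimum degree condition (otherwise we would delete its vertex and violate the invariant). No serious obstacle is anticipated; the entire argument is the classical ``delete low-degree vertices'' trick and the only mild subtlety is checking that the invariant $e(H_i) \geq x|H_i|$ together with $x > 0$ prevents termination at the empty graph.
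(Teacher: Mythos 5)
Your proof is correct and is exactly the standard ``repeatedly delete a vertex of degree at most $\lfloor x\rfloor$'' argument that the paper (via the cited reference) relies on, including the key point that the invariant $e(H_i)\geq x|H_i|$ with $x>0$ rules out ever reaching a single-vertex (hence empty) graph. The only cosmetic slip is in the degenerate case $x=0$, where taking $H=G$ need not give $\delta(H)\geq\lfloor x\rfloor+1=1$; but your deletion procedure itself already handles this by stripping isolated vertices, so nothing substantive is missing.
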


We'll need a standard fact that trees can be greedily embedded in graphs of high minimum degree.
\begin{fact}[\cite{hyperstability}]\label{fact:greedy_tree_embedding}
Let $G$ be a graph, $v\in V(G)$ with $\delta(G-v)\geq d$ and $d(v)\geq \Delta$. Let $T$ be  $\leq d$-edge  tree with $\Delta(T)\leq \Delta$. Then $G$ has a copy of $T$ rooted at $v$.
\end{fact}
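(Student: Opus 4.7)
The plan is to embed $T$ one vertex at a time, greedily, after choosing a convenient ordering of $V(T)$ that respects the asymmetric hypotheses on $v$ and on $G-v$.

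First I would root $T$ at some vertex $r$ (which will be mapped to $v$) and list its vertices as $r = t_1, t_2, \ldots, t_{|T|}$, ordered so that (a) $t_2, \ldots, t_{d_T(r)+1}$ are exactly the children of $r$, and (b) for every $i \geq 2$ the vertex $t_i$ has a unique neighbour $p(t_i)$ in $\{t_1,\ldots,t_{i-1}\}$ (its parent in the rooted tree). Any ordering which lists the children of $r$ first and then processes each subtree in BFS order works. I then define an embedding $f$ by setting $f(t_1) = v$ and, for $i \geq 2$, choosing $f(t_i)$ to be any element of $N_G(f(p(t_i))) \setminus \{f(t_1),\ldots,f(t_{i-1})\}$. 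The whole proof reduces to showing that such a choice always exists.

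To verify this, I would split into two cases depending on whether $p(t_i) = r$. For $2 \leq i \leq d_T(r)+1$ the parent's image is $v$, and the previously placed images in $N_G(v)$ are at most the $i-2$ previously embedded children of $r$; since $d(v) \geq \Delta \geq d_T(r)$, we have $d(v) - (i-2) \geq 1$, so an unused neighbour of $v$ exists. For $i > d_T(r)+1$ the parent $p(t_i)$ is not $r$, so its image $u := f(p(t_i))$ lies in $V(G) \setminus \{v\}$, where by hypothesis $d_{G-v}(u) \geq d$. The number of previously placed images in $V(G)\setminus\{v\}$ is $i-2$, and since $|T| \leq d+1$ we have $i-2 \leq d-1$, so $u$ still has an unused neighbour in $V(G) \setminus \{v\}$.

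The only subtlety, and the step I expect a careful reader to look for, is the interplay between the two hypotheses $d(v) \geq \Delta$ and $\delta(G-v) \geq d$: one cannot simply process $T$ in an arbitrary BFS order, because embedding a deep vertex adjacent to $v$ in $G$ would waste a neighbour of $v$ unnecessarily, and the degree $\Delta$ is much smaller than $d$. Processing all children of $r$ before any deeper vertex, as above, avoids this issue, since the used neighbours of $v$ at step $i \leq d_T(r)+1$ consist only of previously embedded children of $r$.
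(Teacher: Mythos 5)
Your proof is correct. The paper states this fact without proof, deferring to the companion paper \cite{hyperstability}, and your argument is exactly the standard greedy embedding one would expect there: the bookkeeping in both cases checks out ($i-2\le d_T(r)-1\le \Delta-1<d(v)$ for children of the root, and $i-2\le |T|-2\le d-1<\delta(G-v)$ for all deeper vertices), and your chosen ordering (all children of the root first, then level by level) correctly handles the asymmetry between the weak degree condition at $v$ and the strong one on $G-v$.
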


We'll need that in any graph there's a connected component which has at least the same average degree as the whole graph. 
\begin{fact}\label{fact:connectected_component_with_same_density_as_G}
Every graph $G$ has a connected component $C$ with $e(C)/|C|\geq e(G)/|G|$.
\end{fact}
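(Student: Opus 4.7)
The plan is to decompose $G$ into its connected components $C_1, \dots, C_t$ and apply a weighted averaging argument. Since the components partition both the vertex set and the edge set, we have $e(G) = \sum_i e(C_i)$ and $|G| = \sum_i |C_i|$. Thus $e(G)/|G|$ is exactly the weighted average of the ratios $e(C_i)/|C_i|$ with weights $|C_i|/|G|$, and the maximum of a weighted average over a set of values is at most the maximum value, giving the desired component.

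Concretely, I would argue by contradiction. Suppose every component $C_i$ satisfies $e(C_i)/|C_i| < e(G)/|G|$, equivalently $e(C_i) < |C_i| \cdot e(G)/|G|$. Summing over $i$ yields
\[
e(G) = \sum_i e(C_i) < \frac{e(G)}{|G|} \sum_i |C_i| = \frac{e(G)}{|G|} \cdot |G| = e(G),
\]
a contradiction. Hence at least one component $C$ satisfies $e(C)/|C| \geq e(G)/|G|$. One minor care point is that the ratio $e(C)/|C|$ is well-defined because every component has at least one vertex, so no division by zero occurs. If $G$ has no edges the statement is trivial (any component works, with both sides equal to $0$).

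There is no real obstacle here: the proof is a one-line averaging argument and essentially the same reasoning is already used in Fact~\ref{fact:subgraph_min_degree_e/2} (where a similar averaging over vertex deletions extracts a high-density subgraph). The fact is stated for later reference so that when a density bound on $G$ is assumed, one may immediately pass to a connected subgraph with at least the same density before invoking greedy embedding or component-level structural results.
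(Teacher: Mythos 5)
Your proof is correct, and it is the standard weighted-averaging/contradiction argument one would expect here; the paper itself states this fact without proof (deferring to the companion paper), so there is nothing it does differently. No gaps.
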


We'll need the fact that in a bounded degree tree both parts of the bipartition are reasonably large. 
\begin{fact}\label{fact:large_bipart_in_bounded_degree_tree}
In any tree on $\geq 2$ vertices, the parts of the bipartition have order $\geq |V(T)|/2\Delta(T)$.
\end{fact}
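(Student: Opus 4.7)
The plan is to use the fact that in a bipartite graph, the sum of degrees on each side equals the number of edges, combined with the degree bound $\Delta(T)$.

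Let $n=|V(T)|$, and let $(A,B)$ denote the bipartition of $T$. Since $T$ is a tree on $n$ vertices, $e(T)=n-1$. Since every edge of $T$ runs between $A$ and $B$, the handshaking identity restricted to one side gives
\[
\sum_{a\in A} d(a)=e(T)=n-1,\qquad \sum_{b\in B} d(b)=e(T)=n-1.
\]
Each vertex has degree at most $\Delta(T)$, so the first equation yields $|A|\cdot \Delta(T)\geq n-1$ and symmetrically $|B|\cdot \Delta(T)\geq n-1$. Hence both parts have order at least $(n-1)/\Delta(T)$.

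It remains to check that $(n-1)/\Delta(T)\geq n/(2\Delta(T))$, which is equivalent to $n\geq 2$. This is exactly the hypothesis of the fact, so we are done.

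There is no serious obstacle here; the argument is a one-line degree count and the only thing to be slightly careful about is the $-1$ in $e(T)=n-1$, which is absorbed by the factor $2$ in the denominator precisely when $n\geq 2$. This is also why the statement fails for a single isolated vertex, where one ``part'' of the bipartition is empty.
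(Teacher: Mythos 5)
Your proof is correct and is essentially identical to the paper's: both count degrees on one side of the bipartition to get $|A|\,\Delta(T)\geq e(T)=n-1$ and then absorb the $-1$ using $n\geq 2$. Nothing to add.
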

\begin{proof}
Letting $X$ be a part of the bipartition, we have $|V(T)|-1=e(T)=\sum_{x\in X}d(x)\leq \Delta(T)|X|$. Rearranging gives $|X|\geq \frac{|V(T)|-1}{\Delta(T)}\geq \frac{|V(T)|}{2\Delta(T)}$.
\end{proof}

We'll need the following simple fact about integer parts.
\begin{fact}\label{fact:ceilfloor}
For any $k\in \mathbb{\mathbb{Z}}$, $\lfloor\frac{k+1}2\rfloor=\lceil\frac{k}{2} \rceil$.
\end{fact}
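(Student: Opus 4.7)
The plan is to do a simple case analysis on the parity of $k$. Both sides of the claimed identity are invariant under shifts of the form $k\mapsto k+2$ (each side increases by exactly $1$), so it will suffice to verify the identity on two consecutive integers, say $k=0$ and $k=1$, but for clarity I would just write out the two parity cases directly.

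\textbf{Case 1: $k$ is even.} Write $k=2m$ with $m\in\mathbb{Z}$. Then $\frac{k+1}{2}=m+\tfrac12$, so $\lfloor\frac{k+1}{2}\rfloor=m$. On the other side, $\frac{k}{2}=m$ is an integer, so $\lceil\frac{k}{2}\rceil=m$ as well.

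\textbf{Case 2: $k$ is odd.} Write $k=2m+1$ with $m\in\mathbb{Z}$. Then $\frac{k+1}{2}=m+1$ is an integer, so $\lfloor\frac{k+1}{2}\rfloor=m+1$. On the other side, $\frac{k}{2}=m+\tfrac12$, so $\lceil\frac{k}{2}\rceil=m+1$.

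In both cases the two sides agree, which completes the proof. There is no real obstacle here: the identity follows immediately from the definitions of $\lfloor\cdot\rfloor$ and $\lceil\cdot\rceil$ once one separates the even and odd cases.
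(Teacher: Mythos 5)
Your proof is correct and follows essentially the same route as the paper: a case split on the parity of $k$, with each side evaluated explicitly in both cases (the paper phrases the evaluation via the characterization of $\lfloor\cdot\rfloor$ and $\lceil\cdot\rceil$ as the largest/smallest integer on the appropriate side, but this is only a cosmetic difference). No issues.
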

\begin{proof}
If $k$ is even, $\lceil\frac{k}{2} \rceil=\frac k2\le\frac{k+1}2$ and $\lceil\frac{k}{2} \rceil+1=\frac k2+1=\frac{k+2}{2}>\frac{k+1}2$. Thus $\lceil\frac{k}{2} \rceil$ is the largest integer $\le \frac{k+1}2$ i.e. $\lfloor \frac{k+1}2\rfloor=\lceil\frac{k}{2} \rceil$.

If $k$ is odd, then $\lfloor\frac{k+1}2\rfloor=\frac{k+1}2\ge \frac k2$ and $\lfloor\frac{k+1}2\rfloor-1=\frac{k+1}2-1=\frac{k-1}{2}< \frac k2$.  Thus $\lfloor\frac{k+1}{2} \rfloor$ is the smallest integer $\ge \frac{k}2$ i.e. $\lceil\frac{k}{2} \rceil=\lfloor \frac{k+1}2\rfloor$.
\end{proof}

\subsection{Cut-density}
A lot of our intermediate lemmas will be stated via the notion of cut-density, a concept which was first introduced by Conlon, Fox, and Sudakov~\cite{conlon2014cycle}.
\begin{definition}
A graph $G$ is a $q$-cut-dense if every partition $V(G)=A\cup B$ has $e_G(A,B)\geq q|A||B|$.
\end{definition}

Roughly speaking, cut-density  combines two other properties of graphs --- namely connectivity and having high minimum degree. The latter comes from the following: 
\begin{fact}[\cite{hyperstability}] \label{fact:cut_dense_min_degree} 
Let $G$ be a $p$-cut-dense order $n\geq 10$ graph. Then $\delta(G)\geq p(n-1)\geq 0.9pn$ and $e(G)\geq pn^2/4$
\end{fact}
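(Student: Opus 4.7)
The plan is to prove both bounds directly from the definition by choosing suitable partitions and then invoking handshaking.

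First I would establish the minimum degree bound. Pick any vertex $v \in V(G)$ and apply the cut-density condition to the partition $A = \{v\}$, $B = V(G) \setminus \{v\}$. Then $e_G(A,B) = d(v)$ and $|A||B| = n-1$, so $d(v) \geq p(n-1)$. Since this holds for every $v$, we get $\delta(G) \geq p(n-1)$. The inequality $p(n-1) \geq 0.9pn$ is equivalent to $n \geq 10$, which is our hypothesis.

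Next I would deduce the edge bound from the minimum degree bound via the handshaking lemma:
\[
2e(G) = \sum_{v \in V(G)} d(v) \geq n \cdot p(n-1) \geq n \cdot 0.9 pn = 0.9 p n^2,
\]
so $e(G) \geq 0.45 pn^2 \geq pn^2/4$, as required.

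There is no real obstacle here; the only subtlety is that one must use $n \geq 10$ to pass from the clean bound $p(n-1)$ to the form $0.9pn$, and correspondingly to derive $pn^2/4$ from $np(n-1)/2$. Both follow immediately from $n \geq 10$.
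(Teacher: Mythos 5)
Your proof is correct and is the natural argument: singleton cuts give the degree bound, and handshaking converts it to the edge bound, with $n\geq 10$ used exactly where you say. The paper states this fact without proof (citing its companion paper), and your argument matches the standard derivation one would expect there.
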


The following two lemmas explain how to get larger cut-dense subgraphs from smaller ones.
\begin{lemma}[\cite{hyperstability}] \label{lem:cut_dense_union} 
 Let $G_1,G_2$ be  $q$-cut-dense. Then $G_1\cup G_2$ is $\frac{q|V(G_1)\cap V(G_2)|}{4|V(G_1)\cup V(G_2)|}$-cut-dense.
\end{lemma}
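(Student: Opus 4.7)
The plan is to fix any partition $V(G_1 \cup G_2) = A \cup B$, restrict it to each $G_i$, and combine the $q$-cut-density hypotheses on $G_1$ and $G_2$ on those restricted cuts. Write $V_i = V(G_i)$, $W = V_1 \cap V_2$, $w = |W|$, $n = |V_1 \cup V_2|$, $A_i = A \cap V_i$, $B_i = B \cap V_i$, $A_W = A \cap W$, $B_W = B \cap W$. The key observation is that every $G_i$-edge between $A_i$ and $B_i$ is a $G_1\cup G_2$-edge between $A$ and $B$, so
\[ e_{G_1\cup G_2}(A,B) \;\geq\; \max_{i\in\{1,2\}} e_{G_i}(A_i,B_i) \;\geq\; \tfrac{1}{2}\bigl(e_{G_1}(A_1,B_1) + e_{G_2}(A_2,B_2)\bigr) \;\geq\; \tfrac{q}{2}\bigl(|A_1||B_1| + |A_2||B_2|\bigr), \]
using $q$-cut-density of each $G_i$ in the final step.

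It remains to show $|A_1||B_1| + |A_2||B_2| \geq \tfrac{w|A||B|}{2n}$. By the symmetry of the target conclusion in $A$ and $B$, I may swap them if needed and assume $|A_W| \geq w/2$. Since $A_W \subseteq A \cap V_i = A_i$ for both $i$, this gives $|A_i| \geq w/2$ uniformly. Inclusion--exclusion on the $B$-side yields $|B_1| + |B_2| = |B| + |B_W| \geq |B|$, so
\[ |A_1||B_1| + |A_2||B_2| \;\geq\; \tfrac{w}{2}\bigl(|B_1| + |B_2|\bigr) \;\geq\; \tfrac{w}{2}|B| \;\geq\; \tfrac{w|A||B|}{2n}, \]
the last inequality using $|A| \leq n$. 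Combining with the first display finishes the proof.

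The argument is essentially routine; the only mildly clever step is the case-split on which of $|A_W|, |B_W|$ is at least $w/2$, which lets me uniformly lower-bound $|A_i|$ (or $|B_i|$) by $w/2$ in both graphs simultaneously. Without this move the contributions from the two cut-density inequalities would be asymmetrical and harder to combine; I do not foresee any deeper obstacle.
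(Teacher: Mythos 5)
Your proof is correct: restricting the cut to each $G_i$ gives genuine partitions of $V(G_i)$, the max-versus-average step and the inclusion--exclusion identity $|B_1|+|B_2|=|B|+|B_W|$ are both valid, and the case split on whether $|A_W|\geq w/2$ or $|B_W|\geq w/2$ cleanly yields the stated constant $\frac{qw}{4n}$. The paper defers this lemma's proof to the companion paper \cite{hyperstability}, but your argument is exactly the natural one for this statement, so there is nothing further to reconcile.
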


\begin{lemma}[\cite{hyperstability}] \label{lem:cut_dense_add_vertices} 
Let $\delta \leq 1$.
Let $G$ be  $q$-cut-dense and $H$ a graph containing $G$ such that  $|N_H(v)\cap V(G)|\geq \delta|G|$ for all $v\in V(H)\setminus V(G)$. 
 Then $H$ is   $(\frac{q\delta|G|^2}{4|H|^2})$-cut-dense.
\end{lemma}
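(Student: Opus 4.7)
The plan is to fix an arbitrary partition $V(H)=A\cup B$ and show $e_H(A,B)\geq \tfrac{q\delta|G|^2}{4|H|^2}|A||B|$. Write $A_G=A\cap V(G)$, $B_G=B\cap V(G)$, $A_H=A\setminus V(G)$, $B_H=B\setminus V(G)$, and set $a=|A_G|$, $b=|B_G|$, $a'=|A_H|$, $b'=|B_H|$; assume WLOG $a\leq b$, so $b\geq |G|/2$. There are two natural lower bounds to exploit: first, the cut-density of $G$ gives $e_H(A,B)\geq e_G(A_G,B_G)\geq qab$; second, every $v\in A_H$ has $|N_H(v)\cap B_G|\geq \delta|G|-a$, which gives the strengthened bound $e_H(A,B)\geq qab+a'(\delta|G|-a)$. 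The idea is to split into two cases depending on whether $a$ is comparable to $\delta|G|$ or not, using the first bound when it is and the second when it isn't.

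In the first case, suppose $a\geq \delta|G|/2$. Then $ab\geq (\delta|G|/2)(|G|/2)=\delta|G|^2/4$, so $e_H(A,B)\geq q\delta|G|^2/4$. On the other hand, by AM-GM applied to $(a+a')+(b+b')=|H|$, one has $|A||B|=(a+a')(b+b')\leq |H|^2/4$, so the required bound $\tfrac{q\delta|G|^2}{4|H|^2}|A||B|\leq q\delta|G|^2/16$ is comfortably met.

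In the second case, suppose $a<\delta|G|/2$. Then each vertex of $A_H$ has at least $\delta|G|-a>\delta|G|/2$ neighbours in $B_G$, yielding $e_H(A,B)\geq qab+a'\delta|G|/2$. Since $(b+b')\leq|H|$, the target is at most $\tfrac{q\delta|G|^2}{4|H|}(a+a')$, so it suffices to verify the two pieces $qab\geq \tfrac{q\delta|G|^2}{4|H|}a$ and $a'\delta|G|/2\geq \tfrac{q\delta|G|^2}{4|H|}a'$ separately. The first reduces to $b|H|\geq \delta|G|^2/4$, which holds since $b\geq |G|/2$, $|H|\geq|G|$, and $\delta\leq 1$; the second reduces to $|H|\geq q|G|/2$, which holds since $|H|\geq|G|$ and $q\leq 1$.

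The only subtle point — the ``main obstacle'' if there is one — is that when $a$ is small the cut-density of $G$ alone is useless (a tiny $A_G$ forces $qab$ small), and one must genuinely use the hypothesis on $V(H)\setminus V(G)$ to recover a bound via edges from $A_H$ into the bulk of $V(G)$ (which is guaranteed to sit inside $B_G$ precisely because $a$ is so small). Beyond that, the proof is a routine bookkeeping calculation driven by the case split at the threshold $a=\delta|G|/2$.
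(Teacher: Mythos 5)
Your proof is correct. The paper itself states this lemma without proof (deferring to the cited companion paper), so there is nothing in-text to compare against, but your argument is the natural one: the case split at $a=\delta|G|/2$, using cut-density of $G$ when $A\cap V(G)$ is large and the degree hypothesis on $V(H)\setminus V(G)$ when it is small, goes through with the constants as you compute them. The only point worth making explicit is your use of $q\leq 1$ in the second case; this is harmless since $q$-cut-density of any graph on at least two vertices forces $q\leq 1$ (consider the partition isolating a single vertex).
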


All dense graphs can be approximately decomposed into cut-dense components.
\begin{lemma}[\cite{hyperstability}]\label{lem:cut_dense_decomposition} 
Let $G$ be a graph. It is possible to delete $q n^2$ edges from $G$ to get a subgraph all of whose components are $q$-cut-dense.
\end{lemma}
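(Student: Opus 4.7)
My plan is to prove this by a greedy iterative refinement: maintain a partition $\mathcal{P}$ of $V(G)$, initially $\mathcal{P}=\{V(G)\}$, and while there exists some $S\in \mathcal{P}$ for which $G[S]$ is not $q$-cut-dense, witnessed by a nontrivial partition $S=A\cup B$ with $e_{G[S]}(A,B)<q|A||B|$, replace $S$ in $\mathcal{P}$ by the two sets $A$ and $B$. Since every split strictly increases $|\mathcal{P}|$ and $|\mathcal{P}|\leq n$, the process terminates. Once it halts, every piece $G[S]$ is $q$-cut-dense by construction, and deleting all the between-piece edges (which are precisely the edges removed during the splits) yields a subgraph whose connected components are contained in the pieces; and since a $q$-cut-dense graph on $\geq 2$ vertices is connected (an isolated component $C\subsetneq S$ would produce a partition with no crossing edges), each component of the resulting subgraph is itself $q$-cut-dense.

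The remaining task is to bound the number of deleted edges by $qn^2$. The natural tool is the potential function $\phi(\mathcal{P})=\sum_{S\in\mathcal{P}}\binom{|S|}{2}$. A single split of $S$ into $A\cup B$ satisfies $\binom{|S|}{2}-\binom{|A|}{2}-\binom{|B|}{2}=|A||B|$, so the potential decreases by exactly $|A||B|$ in that step. Meanwhile the number of edges deleted at that step is strictly less than $q|A||B|$. Summing over all splits, the total number of deleted edges is strictly less than $q$ times the total drop in potential, which is at most the initial potential $\binom{n}{2}\leq n^2/2$. Hence at most $qn^2/2\leq qn^2$ edges are removed.

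I do not anticipate a serious obstacle: termination is immediate from the bound $|\mathcal{P}|\le n$, the potential calculation is a one-line identity, and the fact that cut-density forces connectivity follows from testing the partition that isolates one connected component. The only mildly delicate point is handling the degenerate pieces of size $0$ or $1$ (which are vacuously $q$-cut-dense and contribute $0$ to the potential), and ensuring that the witnessing partition $S=A\cup B$ in each split step is genuinely nontrivial so that both $|A|$ and $|B|$ are positive—both handled by the convention that the defining inequality $e_G(A,B)\geq q|A||B|$ is only ever violated on nontrivial partitions.
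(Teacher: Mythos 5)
Your proof is correct and is the standard argument for this lemma (the paper defers the proof to~\cite{hyperstability}, but the argument there is exactly this iterative splitting of non-cut-dense parts, charged against the potential $\sum_{S\in\mathcal P}\binom{|S|}{2}$, in the style of Conlon–Fox–Sudakov). All the details check out, including the connectivity of $q$-cut-dense pieces and the handling of singleton parts.
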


A consequence of this is that all dense graphs  contain a large cut-dense subgraph.
\begin{lemma}\label{lem:find_one_cutdense_nonexact} 
Let $G$ be a graph with $e(G)\geq 2qn^2$. Then $G$ has a $q$-cut-dense subgraph of order $\geq qn$.
\end{lemma}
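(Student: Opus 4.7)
The plan is to use the cut-dense decomposition of Lemma~\ref{lem:cut_dense_decomposition} as a black box and then do a simple counting argument to locate a large component. Concretely, I apply Lemma~\ref{lem:cut_dense_decomposition} to $G$ to obtain a subgraph $G'\subseteq G$, from which at most $qn^2$ edges have been deleted and each of whose connected components is $q$-cut-dense. Since $e(G)\geq 2qn^2$, we still have $e(G')\geq qn^2$.

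Next I need to find a component $C$ of $G'$ with $|C|\geq qn$, because such a $C$ is automatically $q$-cut-dense and thus witnesses the conclusion. I argue by contradiction: suppose every component $C$ of $G'$ satisfies $|C|<qn$. Then for each such $C$, using the trivial bound $e(C)\le \binom{|C|}{2}\le \frac{|C|\cdot qn}{2}$ and summing over components,
\[
e(G') \;=\; \sum_C e(C)\;\le\; \frac{qn}{2}\sum_C |C| \;\le\; \frac{qn}{2}\cdot n \;=\; \frac{qn^2}{2},
\]
contradicting $e(G')\geq qn^2$. Hence some component has order at least $qn$, and we take that component as the required $q$-cut-dense subgraph.

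There is no real obstacle here beyond deciding the right counting bound; the only moving part is that $e(G)\geq 2qn^2$ needs to be large enough to survive the $qn^2$-edge deletion from Lemma~\ref{lem:cut_dense_decomposition} and still leave enough edges to force a component of order $\geq qn$, which the factor of $2$ in the hypothesis provides with room to spare.
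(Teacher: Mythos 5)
Your proof is correct and follows essentially the same route as the paper: apply Lemma~\ref{lem:cut_dense_decomposition}, note that $e(G')\geq qn^2$ edges survive, and then find a component of order $\geq qn$. The only cosmetic difference is the final counting step --- the paper finds a vertex of degree $\geq 2qn$ by pigeonhole, whereas you bound the total number of edges that small components could contain; both are valid.
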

\begin{proof}
Apply Lemma~\ref{lem:cut_dense_decomposition} to get a subgraph $G'$ with $e(G')\geq q n^2$, all of whose components are $q$-cut-dense. By the handshaking lemma, $2qn^2=\sum_{v\in V(G)} d_{G'}(v)$, and so, by the Pigeonhole Principle, some vertex $v$ has degree $\geq 2qn$. This vertex is in a $q$-cut-dense component of order $\geq qn$ as required.
\end{proof}

One weakness of cut-density is that it only makes sense for dense graphs. When working with Theorem~\ref{main_theorem}, one would like to have an analogous notion of connectivity of sparse graphs (that have a small cover).  One way of creating such a notion is to ask that the small cover is contained in a cut-dense graph (which forces various connectivity notions on the whole graph). The following two lemmas show that graphs that have small covers can approximately be decomposed into vertex-disjoint subgraphs like this.
\begin{lemma}\label{lem:cut_dense_decomposition_of_dominated_graphs}
Let $p\le 1/1000$ and $k\ge 10^6$. 
Let $G$ be bipartite with parts $X$, $Y$ with $|X|\leq 10k$.  There are  vertex-disjoint subgraphs $G_1, \dots, G_t$ covering all but $\leq 200p kn$ edges of $G$ such that each $G_i$ has a subgraph $D_i$ with $p^4k\le |D_i|\leq   420 p^{-8}k$, $D_i$ $p^{10^8p^{-7}}$-cut-dense, and $|N(v)\cap V(D_i)|\geq p^{10^6p^{-1}}k$ for all $v\in G_i$.
\end{lemma}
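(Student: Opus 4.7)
The plan is iterative: in each round I extract one pair $(G_i, D_i)$ with the required properties, remove $V(G_i)$ from $G$, and continue until the remaining graph has $\le 100\,pkn$ edges, keeping total uncovered within the $200\,pkn$ budget. A single round proceeds in four steps.

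\emph{Cleanup and codegree graph.} First delete every $v\in Y$ with $d_G(v)<pk$; this removes at most $pk\,|Y|\le pkn$ edges. I may assume the remaining graph still has $\gg pkn$ edges, else the round is unnecessary. Since $|X|\le 10k$, I form the auxiliary \emph{codegree graph} $H$ on vertex set $X$, with $xx'\in E(H)$ iff $|N_G(x)\cap N_G(x')|\ge p^{c_1}k$ for an exponent $c_1=c_1(p)$ chosen below. Double-counting $\sum_{v\in Y}\binom{d_G(v)}{2}$, combined with $d_G(v)\ge pk$ and $\binom{|X|}{2}\le 50k^2$, forces $e(H)=\Omega(p^2 k^2)$, so $H$ is dense on its $\le 10k$ vertices.

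\emph{Cut-dense core and lifting to $G$.} Apply Lemma~\ref{lem:find_one_cutdense_nonexact} to $H$ to obtain a $q$-cut-dense subgraph $D^0\subseteq X$ of order $\ge q|X|$, with $q$ a small polynomial in $p$. Since $D^0$ carries no $G$-edges (bipartiteness), I lift it to an honest bipartite block by setting $B^0:=\{v\in Y : |N_G(v)\cap V(D^0)|\ge \beta|D^0|\}$ for a suitable $\beta$, and $D_i:=G[V(D^0)\cup B^0]$. Cut-density of $D_i$ is inherited from $H[D^0]$: for any bipartition $V(D_i)=A\sqcup B$, each $H$-edge crossing $A\cap D^0$ and $B\cap D^0$ has $\ge p^{c_1}k$ common $G$-neighbours in $Y$, and a positive fraction of these lie in $B^0$ and produce actual $G$-edges across the cut. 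If $|V(D_i)|$ is too large, prune $B^0$ down, using Lemma~\ref{lem:cut_dense_add_vertices} in reverse to keep control of the cut-density constant; if the crude lift is too weak, invoke Lemma~\ref{lem:cut_dense_union} to amalgamate several subpieces.

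\emph{Extension and iteration.} Finally set
\[
 G_i \;:=\; V(D_i)\,\cup\,\bigl\{v\in V(G) : |N_G(v)\cap V(D_i)|\ge p^{10^6 p^{-1}}k\bigr\},
\]
so the extension condition is built in for vertices outside $V(D_i)$. Vertices $v\in B^0$ satisfy it because $|N_G(v)\cap V(D^0)|\ge \beta|D^0|\ge \beta\cdot p^4 k$, which with the choice of $\beta$ exceeds $p^{10^6 p^{-1}}k$; vertices $x\in D^0$ satisfy it by a last averaging step showing that a typical $G$-neighbour of $x$ lies in $B^0$ (using the minimum degree of $D^0$ in $H$ guaranteed by Fact~\ref{fact:cut_dense_min_degree} together with the codegree threshold). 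Then delete $V(G_i)$ from $G$ and repeat. The cleanup losses across all rounds sum to at most $pk\cdot\sum_i|G_i|\le pkn$; an analogous charging handles the edges discarded when forming the codegree graph, keeping totals well under $200\,pkn$.

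The subtle step is the lift from $H$-cut-density on the scaffolding $X$ to bipartite $G$-cut-density of $D_i$: when $|B^0|$ greatly exceeds $|D^0|$ the ratio $|A\cap D^0||B\cap D^0|/(|A||B|)$ can collapse, so the size of $B^0$ and the threshold $\beta$ must be tuned very carefully, and several layers of merging (Lemma~\ref{lem:cut_dense_union}) and re-inflation (Lemma~\ref{lem:cut_dense_add_vertices}) will be nested. This layering is exactly what degrades the cut-density constant from a polynomial in $p$ down to the tower-like $p^{10^8 p^{-7}}$ and produces the $p^{-8}$ blow-up in the vertex-count upper bound.
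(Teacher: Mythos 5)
Your high-level plan (find cut-dense cores, inflate them by high-degree vertices, account for leftovers) is in the right spirit, but two steps fail as written. First, the lift from the codegree graph $H$ to the bipartite graph $D_i$ does not go through. For a cut $V(D_i)=A\sqcup B$ that splits $D^0$, an $H$-edge $xx'$ crossing the cut guarantees $\ge p^{c_1}k$ common $G$-neighbours of $x,x'$ in $Y$, and each such common neighbour does yield a $G$-edge across the cut --- but only if that neighbour lies in $B^0=V(D_i)\cap Y$. Nothing forces this: a common neighbour of $x$ and $x'$ may have only two neighbours in $D^0$ in total and so fail the $\beta|D^0|$ threshold, so ``a positive fraction of these lie in $B^0$'' is unjustified and in general false. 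Relatedly, the lemma demands $|N(v)\cap V(D_i)|\ge p^{10^6p^{-1}}k$ for \emph{every} $v\in G_i$, including every $x\in D^0$; your ``last averaging step'' gives this only for a typical $x$, and you cannot delete the bad $x$'s without destroying the cut-density of $D^0$. (The pruning of $B^0$ ``using Lemma~\ref{lem:cut_dense_add_vertices} in reverse'' is likewise not a defined operation, and it is exactly where the upper bound $|D_i|\le 420p^{-8}k$ has to come from.)

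Second, the greedy round-by-round extraction does not respect the $200pkn$ budget. When you delete $V(G_1)$ and recurse, every edge between $V(G_1)$ and $V(G)\setminus V(G_1)$ is permanently uncovered: it is not in $G_1$, and by vertex-disjointness it cannot lie in any later $G_i$. A vertex $y\in Y\cap(V(G_1)\setminus V(D_1))$ joins $G_1$ because it has $\ge p^{10^6p^{-1}}k$ neighbours in $D_1$, but it may have up to $10k$ further neighbours in $X\setminus V(G_1)$; summed over $Y$ this alone can cost $\Theta(kn)$ edges, swamping $200pkn$. The paper avoids both problems by working inside $G$ from the start: it takes a \emph{maximal} family of vertex-disjoint $p^2$-cut-dense bipartite subgraphs of $G$ of order $\ge p^2k$ (no auxiliary codegree graph), repeatedly merges pairs of them that sit inside a common cut-dense subgraph while tracking the degradation of the cut-density parameter, and only then defines all the attachment sets $V_i$ simultaneously. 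Maximality of the family bounds the edges touching the residual set $V_0$, and the impossibility of further merging bounds the overlaps $V_i\cap(V_j\cup D_j)$ --- which is precisely what controls the cross edges your one-at-a-time scheme silently discards. You would need analogues of both of these global arguments to close the gaps.
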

\begin{proof}
Let $\varepsilon=p^{10^6p^{-1}}$ and let $n:= |V(G)|$. 
Let $H_1, \dots, H_t$ be a maximal collection of vertex-disjoint $p^2$-cut-dense subgraphs in $G$ of order $\ge  p^2k$. Since each $H_i$ has $\ge\delta(H_i)\overset{\text{F.~\ref{fact:cut_dense_min_degree}}}{\geq}   0.5p^2|H_i|\ge 0.5p^2\cdot p^2k\ge 0.5p^6k$ vertices in $X$, we have that $t\leq \frac{|X|}{0.5p^6k}\le  \frac{10k}{0.5p^6k}= 20p^{-6}$. Since vertices in $Y$ have degree $\le |X|\le 10k$, we have $10k\geq \delta(H_i)\geq 0.5p^2|H_i|$, giving $|H_i|\le 20p^{-2}k$. 
For a graph $H$ define $\kappa(H)$ to be the maximum $\kappa$ for which $H$ is $\kappa$-cut-dense. Repeat the following: as long as there are two graphs $H^-, H^+$ in the collection contained in some graph $H\subseteq  V(G)\setminus\bigcup_{H'\neq H^-, H^+} V(H')$ (i.e. vertex-disjoint from the rest of the collection) with $\kappa(H)\geq \varepsilon^2p^{22}\min(\kappa(H^-),\kappa(H^+))$ and $|H|\leq |H^-\cup H^+|+k$, then replace $H^-, H^+$ in the collection with $H$. 
Let $D_1, \dots, D_s$ be the final collection of graphs we get. Note that since the joining operation happens $\leq t$ times, we have $\kappa(D_i)\geq (\varepsilon^2p^{22})^t p^2\ge(\varepsilon^2p^{22})^{20p^{-6}} p^2  = p^{(10^6p^{-1}+22)20p^{-6}+2}>p^{10^8p^{-7}}$ and $|D_i|\leq t( 20p^{-2}k+k)\le 20p^{-6}\cdot 21p^{-2}k= 420 p^{-8}k$ for all $i$. Also note that $D_1, \dots, D_s$ are vertex-disjoint (the initial $H_1, \dots, H_k$ were vertex-disjoint and at every step of the joining operation, disjointness was preserved).
For each $i$, let $V_i=\{y\in V(G)\setminus  \bigcup V(D_i): |N(y)\cap V(D_i)|\geq \varepsilon k\}$.  Let $V_0=V(G)\setminus \bigcup V_i\cup D_i$. 

\begin{claim}
 $|V_i\cap (V_j\cup D_j)|\leq \varepsilon k$ for all $i\neq j$. 
\end{claim}
\begin{proof} 
 Suppose otherwise, and let $U\subseteq V_i\cap (V_j\cup D_j)$ be a subset of order $\varepsilon k$. By Lemma~\ref{lem:cut_dense_add_vertices} $D_i\cup U$ and $D_j\cup U$ are $\ge \frac{\min(\kappa(D_i), \kappa(D_j))(\varepsilon k/420 p^{-8}k) (p^2k)^2}{4(p^2k+\varepsilon k)^2}\geq \varepsilon p^{13}\min(\kappa(D_i), \kappa(D_j))$-cut-dense. By Lemma~\ref{lem:cut_dense_union}, $D_i\cup D_j\cup U$ is $\frac{\varepsilon p^{13}\min(\kappa(D_i), \kappa(D_j))(\varepsilon k)}{4(2(420 p^{-8}k)+\varepsilon k)}\geq \varepsilon^2 p^{22}\min(\kappa(D_i), \kappa(D_j))$-cut-dense. Since $D_i\cup D_j\cup U\subseteq  V(G)\setminus \bigcup_{r\neq i,j}D_r$, this contradicts the fact that we can't join $D_i, D_j$. 
\end{proof} 
 The above claim implies that there are $\leq \varepsilon kn$ edges touching $V_i\cap (V_j\cup D_j)$.

\begin{claim}
There are $\leq 100p k n$ edges touching $V_0$. 
\end{claim}
\begin{proof}
Suppose otherwise, and let $G'=G[V_0, V(G)\setminus \bigcup D_i]$. Using the definition of $V_i$, for every  $v\in V_0\subseteq V(G)\setminus (V_i\cup \bigcup D_j)$ there are $<\varepsilon k$ edges from $v$ to $D_i$. This shows that there are $\leq \varepsilon t k |V_0|<\varepsilon(20p^{-6})kn<pkn$ edges from $V_0$ to $\bigcup D_i$, giving $e(G')\geq 99pkn$.  Note that $G'$ has a vertex in $X$ of degree $\geq e(G')/|X|\geq 99pkn/n= 99pk$, and so $|V(G')\cap Y|\ge 99pk$. Select a set $\hat Y$ of $99pk\le k$ vertices of $G'\cap Y$ uniformly at random to get a graph $G''=G'[\hat Y\cup (G'\cap X)]$ with $\le 11k$ vertices and expected number of edges $\geq 99 pk n(99pk/n)\geq  3p^2(11k)^2\geq 3p^2|G''|^2$. Fix one such graph.  Lemma~\ref{lem:find_one_cutdense_nonexact} gives a new $p^2$-cut-dense subgraph vertex-disjoint from the original $H_1, \dots, H_t$ with $|H_i|\ge p^2k$ (contradicting maximality).
\end{proof}

For $i=1, \dots, t$, let $G_i=D_i\cup V_i\setminus (\bigcup_{j\neq i} D_j\cup V_j)$, noting that this ensures that $G_1, \dots, G_t$ are vertex-disjoint. Edges outside $\bigcup G_i$ must touch $V_0\cup \bigcup V_i\cap (V_j\cup D_j)$, --- and we've established that there are $\leq 100pkn+ \varepsilon knt\leq 100pkn+\varepsilon(20p^{-6})kn\leq 200pkn$ such edges.
\end{proof}

The following lemma is a slight strengthening of the previous one. that makes the $D_i$s contain all the $X$-vertices of the corresponding $G_i$.
\begin{lemma}\label{lem:cut_dense_decomposition_of_dominated_graphs2}
Let $p\le 1/1000$ and $k\ge 10^{6}$. 
Let $G$ be bipartite with parts $X$, $Y$ with $|X|\leq 10k$.  There are  vertex-disjoint subgraphs $G_1, \dots, G_t$ covering all but $\leq 200p kn$ edges of $G$ such that each $G_i$ has a subgraph $D_i$ with $p^4k\le |D_i|\leq   430 p^{-8}k$, $D_i$ $p^{10^9p^{-8}}$-cut-dense, $|N(v)\cap V(D_i)|\geq p^{10^6p^{-1}}k$ for all $v\in G_i$, and $V(G_i)\cap X\subseteq V(D_i)$.
\end{lemma}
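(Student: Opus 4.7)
The plan is to invoke Lemma~\ref{lem:cut_dense_decomposition_of_dominated_graphs} with the same parameters $p, k$ to obtain vertex-disjoint subgraphs $G_1, \dots, G_t$ with cut-dense cores $D_1, \dots, D_t$, covering all but $\le 200pkn$ edges of $G$. These already satisfy every conclusion we need, except possibly the final inclusion $V(G_i) \cap X \subseteq V(D_i)$. For each $i$ we repair this by enlarging $D_i$ to
\[
D_i' \;:=\; G_i\bigl[V(D_i) \cup (V(G_i) \cap X)\bigr].
\]
The $G_i$ are left unchanged, so they remain vertex-disjoint and still cover all but $\le 200pkn$ edges.

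Four of the five required properties of $D_i'$ come for free. The containment $V(G_i) \cap X \subseteq V(D_i')$ is immediate from the construction; the lower bound $|D_i'| \ge |D_i| \ge p^4 k$ and the degree condition $|N(v) \cap V(D_i')| \ge |N(v) \cap V(D_i)| \ge p^{10^6 p^{-1}} k$ for $v \in G_i$ both follow from $V(D_i) \subseteq V(D_i')$. The upper bound $|D_i'| \le |D_i| + |X| \le 420 p^{-8} k + 10 k \le 430 p^{-8} k$ uses $p \le 1/1000$, which gives $p^{-8} \ge 10$.

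The only point requiring work is re-establishing cut-density, which is done with one application of Lemma~\ref{lem:cut_dense_add_vertices}. Each added vertex $v \in (V(G_i) \cap X) \setminus V(D_i)$ has $|N_{G_i}(v) \cap V(D_i)| \ge p^{10^6 p^{-1}} k$ by the degree condition inherited from Lemma~\ref{lem:cut_dense_decomposition_of_dominated_graphs}, so $\delta := p^{10^6 p^{-1}}k / |D_i|$ is admissible. Taking $q := p^{10^8 p^{-7}}$, the lemma gives that $D_i'$ is $\frac{q\delta |D_i|^2}{4|D_i'|^2}$-cut-dense. Using $|D_i| \ge p^4 k$ and $|D_i'| \le 430 p^{-8} k$, this lower bound simplifies to
\[
\frac{p^{10^8 p^{-7} + 10^6 p^{-1}} \cdot k \cdot |D_i|}{4|D_i'|^2} \;\ge\; \frac{p^{10^8 p^{-7} + 10^6 p^{-1} + 20}}{4 \cdot 430^2},
\]
whose exponent (after absorbing the numerical constant at the cost of an additive $O(1)$ in the exponent) is of the form $10^8 p^{-7} + 10^6 p^{-1} + O(1)$. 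Since $10^9 p^{-8} \ge 10 \cdot 10^8 p^{-7}$ for $p \le 1/1000$, this comfortably beats $10^9 p^{-8}$, yielding the required cut-density $\ge p^{10^9 p^{-8}}$.

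The entire argument is a routine strengthening of the previous lemma; the only slightly delicate point is confirming that the loss incurred by Lemma~\ref{lem:cut_dense_add_vertices} can be absorbed into the slack between $p^{10^8 p^{-7}}$ and the weaker target $p^{10^9 p^{-8}}$. Conceptually, the weakening of the cut-density and size bounds in the statement is designed exactly to leave room for adding at most $|X| \le 10k$ vertices to each $D_i$.
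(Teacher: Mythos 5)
Your proposal is correct and follows essentially the same route as the paper: apply Lemma~\ref{lem:cut_dense_decomposition_of_dominated_graphs}, absorb $V(G_i)\cap X\setminus V(D_i)$ into $D_i$, and restore cut-density with a single application of Lemma~\ref{lem:cut_dense_add_vertices}, noting that the weakened exponent $10^9p^{-8}$ leaves ample room for the loss. The numerical estimates match the paper's up to the cosmetic choice of $\delta$.
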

\begin{proof}
Apply Lemma~\ref{lem:cut_dense_decomposition_of_dominated_graphs} to get families $G_1, \dots, G_t$ and $D_1, \dots, D_t$. For each $i$, let $X_i=G_i\cap X\setminus D_i$ and let $D_i':=G_i[V(D_i)\cup X_i]$. Note  $p^4k\le |D_i|\le |D_i'|\le |D_i|+10k\le 420 p^{-8}k+10k\le430 p^{-8}k$. By Lemma~\ref{lem:cut_dense_add_vertices}, $D_i'$ is  $\frac{(p^{10^8p^{-7}})(p^{10^6p^{-1}}k/420 p^{-8}k)(p^4 k)^2}{4(430 p^{-8}k)^2}\geq  p^{10^9p^{-8}}$-cut-dense. We have $|N(v)\cap V(D_i')|\ge |N(v)\cap V(D_i)|\geq p^{10^6p^{-1}}k$ for all $v\in G_i$,  and $V(G_i)\cap X=X_i\cup (D_i\cap X)\subseteq D_i'$ as required.
\end{proof}

\section{Stability analysis}
In this section, we prove Theorem~\ref{thm:stability_main}. We start by introducing the fundamentals of Szemer\'edi's Regularity Lemma  in the next section. Afterwards we go over the techniques of Besomi, Pavez-Sign\'e, and Stein and extract some useful lemmas from there. Then we use these to prove some stability lemmas, culminating in Theorem~\ref{thm:stability_main}.
\subsection{Regularity preliminaries}
We begin by introducing the basics of the regularity method. 
A pair of disjoint vertex sets $(A,B)$ in a graph $G$ is said to be $\varepsilon$-regular if for all subsets $A'\subseteq A, B'\subseteq B$ having $|A'|\ge \varepsilon|A|, |B'|\geq \varepsilon|B|$, we have $e_G(A', B')=(1\pm \varepsilon)|A'||B'|\frac{e_G(A,B)}{|A||B|}$. 
An $(\varepsilon, \eta)$-regularity partition of $G$ is a partition $V(G)=V_1, \dots, V_l$ with:
\begin{itemize}
\item $|V_1|=\dots=|V_l|$.
\item $V_i$ is independent in $G$.
\item For $i\neq j$, the pair $(V_i,V_j)$ is $\varepsilon$-regular with density either $\geq \eta$ or $=0$.
\end{itemize}
The $(\varepsilon, \eta)$-reduced graph of the partition is the graph $R$ on $V(R)=\{V_1, \dots, V_l\}$ with $V_iV_j$ an edge whenever $d(V_i, V_j)\geq \eta$. We now state the degree form of the regularity lemma (see~\cite{komlos1995szemeredi}, Theorem 1.10).  

\begin{theorem}[Szemer\'edi's Regularity Lemma]\label{thm:regularity_lemma}
Let $\varepsilon, m^{-1}\gg M^{-1}$. Every graph $G$ has a subgraph $G'$ such that $|G|-|G'|\leq \varepsilon n$, every vertex $v\in G'$ has $d_G(v)-d_{G'}(v)\leq (\varepsilon+\eta)n$, and $G'$ has an $(\varepsilon, \eta)$-regular partition where the number of parts is in $[m,M]$.
\end{theorem}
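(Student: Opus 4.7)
The plan is to derive this degree-form statement from the classical Szemer\'edi Regularity Lemma in its equitable-partition form, which guarantees for every $\varepsilon'>0$ and every $m$ an $\varepsilon'$-regular equitable partition $V_0,V_1,\dots,V_l$ with $m\le l\le M_0(\varepsilon',m)$, $|V_0|\le\varepsilon' n$, $|V_1|=\dots=|V_l|$, and at most $\varepsilon' l^2$ non-regular pairs $(V_i,V_j)$. I would choose $\varepsilon'$ polynomially small in $\varepsilon$ and $\eta$ (say $\varepsilon'=\varepsilon^2\eta^2/10^4$), apply the classical lemma with lower bound $\max(m,\varepsilon'^{-1})$, and set $M$ to be the resulting $M_0$.

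Starting from this partition, I would build $G'$ in three cleaning steps. First, discard all vertices of $V_0$ (contributing at most $\varepsilon' n$ to $|G|-|G'|$) and delete all edges inside each class $V_i$ (costing each vertex at most $|V_i|-1\le n/l\le\varepsilon' n$). Second, delete all edges lying in pairs $(V_i,V_j)$ of density less than $\eta$; each vertex loses at most $l\cdot\eta\cdot(n/l)=\eta n$ this way. The subtle part is the third step: delete all edges in non-regular pairs. The total number of such edges is at most $\varepsilon' l^2(n/l)^2=\varepsilon' n^2$, but I need a \emph{per-vertex} bound, not just a global one. By averaging, the set $B$ of vertices with more than $\sqrt{\varepsilon'}\,n$ edges into non-regular pairs satisfies $|B|\le 2\sqrt{\varepsilon'}\,n$, and I would move $B$ into the discarded set.

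Every surviving vertex $v$ then loses at most $\varepsilon' n+\eta n+\sqrt{\varepsilon'}\,n\le(\varepsilon+\eta)n$ edges, and $|G|-|G'|\le\varepsilon' n+2\sqrt{\varepsilon'}\,n\le\varepsilon n$. The main obstacle, and the reason for choosing $\varepsilon'$ much smaller than $\varepsilon$, is precisely this per-vertex control: the classical lemma gives only a global bound on edges in non-regular pairs, so one must absorb the small set of vertices that violate the per-vertex bound into the discarded set, at the cost of one extra polynomial factor in the hierarchy. A final routine step restores equitability by trimming each surviving $V_i$ to a common size $\lfloor(n-|V_0|-|B|)/l\rfloor$, pushing at most $l\le M$ further vertices into the discarded set; this is safely absorbed into the $\varepsilon n$ budget since $M^{-1}\ll\varepsilon$. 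What remains is the desired $(\varepsilon,\eta)$-regular partition on $G'$.
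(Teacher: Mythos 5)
The paper does not prove this statement at all --- it is quoted as the standard ``degree form'' of the Regularity Lemma with a citation to Koml\'os--Simonovits (Theorem 1.10 there), so there is no in-paper argument to compare against. Your overall strategy (apply the classical equitable form with a much smaller parameter $\varepsilon'$, then clean up internal edges, sparse pairs, and irregular pairs, absorbing the problematic vertices into the discarded set) is exactly the standard derivation, and you correctly identify the one non-trivial point: the classical lemma only bounds the \emph{total} number of edges in irregular pairs, whereas the degree form needs per-vertex control.

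However, your implementation of that key step has a genuine gap. You remove the set $B$ of vertices with more than $\sqrt{\varepsilon'}\,n$ edges into irregular pairs, with $|B|\le 2\sqrt{\varepsilon'}\,n$, and then claim equitability can be restored by trimming each surviving class to a common size at the cost of ``at most $l\le M$ further vertices''. This fails because each class has size $n/l\le\varepsilon' n$ (you imposed $l\ge\varepsilon'^{-1}$), which is far smaller than $|B|\approx 2\sqrt{\varepsilon'}\,n$; so $B$ can swallow several classes entirely, or leave the classes with wildly different sizes. You cannot then trim every class to the \emph{average} remaining size $\lfloor(n-|V_0|-|B|)/l\rfloor$, since some classes may have fewer vertices than that left; trimming to the \emph{minimum} remaining size could discard almost everything. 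You also do not verify that the trimmed pairs are still $(\varepsilon,\eta)$-regular with density $\ge\eta$ (a sub-pair of an $\varepsilon'$-regular pair only inherits regularity with a controlled parameter when each side retains a bounded-below fraction of its vertices, which is exactly what may fail here). The standard fix is to do the absorption at the \emph{cluster} level rather than the vertex level: since at most $\varepsilon' l^2$ pairs are irregular, at most $2\sqrt{\varepsilon'}\,l$ clusters lie in more than $\sqrt{\varepsilon'}\,l$ irregular pairs; discard those clusters wholesale (at most $2\sqrt{\varepsilon'}\,n$ vertices), after which every surviving vertex loses at most $\sqrt{\varepsilon'}\,l\cdot(n/l)=\sqrt{\varepsilon'}\,n$ edges to irregular pairs, the remaining classes are untouched and still equal-sized, and no trimming or re-verification of regularity is needed.
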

We say that a partition $V_1, \dots , V_k$ \emph{refines} another partition $U_1, \dots, U_m$ if for all $i$, we have $V_i\subseteq U_j$ for some $j$. It is a standard fact that in Theorem~\ref{thm:regularity_lemma}, one specify some initial partition $V(G)=U_1\cup \dots \cup U_m$, and then ensure that the partition produced by the lemma refines this. 

Note that in this theorem $e(G)-e(G')\leq \sum_{v\in V(G)}(d_G(v)-d_{G'}(v))\leq (|G|-|G'|)n+\sum_{v\in V(G')}(d_G(v)-d_{G'}(v))\leq \varepsilon n^2+\sum_{v\in V(G')}(\varepsilon+\eta)n\leq (2\varepsilon+\eta)n^2$.

\begin{fact}\label{fact:cut_dense_preserved_by_regularity} 
Let $G$ be an $n$-vertex, $q$-cut-dense graph and  $G'$ a subgraph such that every vertex $v\in G'$ has $d_G(v)-d_{G'}(v)\leq \alpha n$. Then $G'$ is $(q-2\alpha)$-cut-dense.
\end{fact}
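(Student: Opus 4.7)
The plan is to fix an arbitrary partition $V(G') = A \cup B$ and lower-bound $e_{G'}(A,B)$ by comparing it to $e_G(A,B)$. Since $G'$ is a subgraph of $G$ on the same vertex set, this same partition is a partition of $V(G)$, so the $q$-cut-density of $G$ immediately gives $e_G(A, B) \geq q|A||B|$.

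The next step is to bound the number of $A$--$B$ edges in $G$ that are missing from $G'$, i.e. the difference $e_G(A,B) - e_{G'}(A,B)$. Each such missing edge has an endpoint $v \in A$ contributing to $d_G(v) - d_{G'}(v)$, so $e_G(A,B) - e_{G'}(A,B) \leq \sum_{v \in A}(d_G(v)-d_{G'}(v)) \leq \alpha n|A|$. The symmetric argument from the $B$ side yields the bound $\alpha n|B|$, and combining these gives $e_G(A,B) - e_{G'}(A,B) \leq \alpha n\,\min(|A|,|B|)$.

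To finish, I would convert $\alpha n\,\min(|A|,|B|)$ into $2\alpha|A||B|$ using $|A|+|B| = n$: indeed $n\min(|A|,|B|) = (|A|+|B|)\min(|A|,|B|) \leq 2\max(|A|,|B|)\min(|A|,|B|) = 2|A||B|$. Putting everything together yields $e_{G'}(A,B) \geq q|A||B| - 2\alpha|A||B| = (q-2\alpha)|A||B|$, which is the desired cut-density bound.

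The only subtle step is the last one. Using a single one of the two handshaking bounds only gives $e_{G'}(A,B) \geq q|A||B| - \alpha n|A|$, which is weaker than $(q-2\alpha)|A||B|$ whenever $|B| < n/2$. Taking the minimum of the two bounds and then applying the elementary inequality $n\min(|A|,|B|) \leq 2|A||B|$ (which rests crucially on $|A|+|B|=n$) is what makes the constants line up so that exactly $2\alpha$ is lost.
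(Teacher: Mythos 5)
Your core computation is right and matches the paper's: bound the missing cut edges by the degree losses summed over one side, then convert $\alpha n\min(|A|,|B|)$ into $2\alpha|A||B|$. However, there is a genuine gap at the very first step: you assert that $G'$ is "a subgraph of $G$ on the same vertex set," but the statement only says $G'$ is a subgraph, and in every application in the paper (Fact~\ref{fact:cut_dense_preserved_by_regularity} is invoked on the subgraph produced by Theorem~\ref{thm:regularity_lemma}) we have $|G|-|G'|\leq \varepsilon n$ with strict inequality possible, so $V(G')\subsetneq V(G)$. Your argument then breaks in two places. First, a partition $V(G')=A\cup B$ is not a partition of $V(G)$, so the cut-density of $G$ does not directly give $e_G(A,B)\geq q|A||B|$. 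Second, the step you yourself flag as crucial, $n\min(|A|,|B|)\leq 2|A||B|$, uses $|A|+|B|=n$; if $|A|+|B|=|V(G')|<n$ this inequality can fail outright (e.g.\ $|A|=|B|=m$ with $2m<n$ gives $nm>2m^2$).

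The missing idea, which is how the paper proceeds, is to pad the larger side: with $|U|\leq|V|$, set $V'=V\cup(V(G)\setminus V(G'))$ so that $U,V'$ partitions $V(G)$ and $|V'|\geq n/2$. Cut-density of $G$ gives $e_G(U,V')\geq q|U||V'|$, and every edge of $G$ from $u\in U$ to $V'$ that is absent from $G'$ (including all edges from $u$ to the deleted vertices $V(G)\setminus V(G')$) is counted in $d_G(u)-d_{G'}(u)\leq\alpha n$. Hence $e_{G'}(U,V)=e_{G'}(U,V')\geq q|U||V'|-\alpha n|U|\geq (q-2\alpha)|U||V'|\geq(q-2\alpha)|U||V|$. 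With this adjustment your proof is correct; without it, it only proves the statement for spanning subgraphs, which is not the case the paper needs.
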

\begin{proof}
Consider a partition $V(G')=U\cup V$. Without loss of generality $|U|\leq |V|$ giving $|V|\geq n/2$.  Let $V'=V\cup (V(G)\setminus V(G'))$ --- so now $U, V'$ is a partition of $V(G)$. We have $e_{G'}(U,V)=e_{G'}(U, V')=\sum_{u\in U}|N_{G'}(u)\cap V'|=\sum_{u\in U}(|N_{G}(u)\cap V'|-|(N_G(u)\setminus N_{G'}(u))\cap V'|)\geq \sum_{u\in U}|N_{G}(u)\cap V'|-|N_G(u)\setminus N_{G'}(u)| =  e_G(U, V')-\sum_{u\in U}(d_G(u)-d_{G'}(u))\geq q|U||V'|-\sum_{u\in U}\alpha n= q|U||V|-\alpha |U|n\geq q|U||V|-2\alpha |U||V|=(q-2\alpha)|U||V|$.
\end{proof}
This fact is useful in combination with the following.

\begin{lemma}\label{lem:cut_dense_connected_cluster_graph}
Let $q>\eta$.
Let $G$ be $q$-cut-dense, and $R$ a $(\eta, \varepsilon)$-reduced graph of $G$. Then $R$ is connected.
\end{lemma}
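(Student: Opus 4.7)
The plan is a direct contradiction argument, based only on the $\eta$-threshold defining edges of $R$ (the $\varepsilon$-regularity plays no role). Suppose $R$ is disconnected, and let $A$ be one connected component of $R$ with $B := V(R)\setminus A$. Both $A$ and $B$ are nonempty, and by construction there is no edge of $R$ between $A$ and $B$, meaning $d_G(V_i,V_j) < \eta$ for every $V_i\in A$ and $V_j\in B$.

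Set $U_A := \bigcup_{V_i\in A} V_i$ and $U_B := \bigcup_{V_j\in B} V_j$. Since the partition $V_1,\dots,V_l$ covers $V(G)$ and the parts are disjoint, $(U_A,U_B)$ is a partition of $V(G)$ into two nonempty sets. The parts are independent in $G$, so every edge of $G$ lies between two distinct parts, and in particular
\[
e_G(U_A,U_B) \;=\; \sum_{V_i\in A,\, V_j\in B} e_G(V_i,V_j) \;<\; \sum_{V_i\in A,\, V_j\in B} \eta |V_i||V_j| \;=\; \eta |U_A||U_B|.
\]
On the other hand, by $q$-cut-density of $G$ applied to the partition $(U_A,U_B)$, we have $e_G(U_A,U_B) \geq q |U_A||U_B|$. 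Combining these and using $|U_A|,|U_B|>0$ gives $q \leq \eta$, contradicting $q>\eta$.

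I do not anticipate any obstacle: the statement is essentially unpacking the definitions. The one thing to double-check is that the partition in the definition of an $(\varepsilon,\eta)$-regularity partition really covers all of $V(G)$ (so that $U_A\cup U_B = V(G)$ gives a genuine cut of $G$ we can feed to the cut-density hypothesis) — and this is how it is set up earlier in the section.
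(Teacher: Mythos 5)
Your proof is correct and is essentially identical to the paper's: both lift a disconnection of $R$ to a vertex partition of $G$, bound the crossing edges by $\eta|U_A||U_B|$ using that non-edges of $R$ correspond to pairs of density below $\eta$, and contradict $q$-cut-density. No issues.
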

\begin{proof}
Let $V_1, \dots, V_t$ be the regularity partition corresponding to $R$. 
If $R$ is disconnected then we can partition $\{V_1, \dots, V_t\}=X\cup Y$ into non-empty sets so that there are no edges of $R$ from $X$ to $Y$. Let $X'=\bigcup_{V_i\in X} V_i$, $Y'=\bigcup_{V_j\in Y} V_j$ be the corresponding subsets of $G$. For $V_i\in X, V_j\in Y$, since $V_iV_j\not\in E(R)$, we have $e_G(V_i, V_j)<\eta|V_i||V_j|$. This gives $e_G(X', Y')=\sum_{V_i\in X}\sum_{V_j\in Y} e_G(V_i,V_j)< \sum_{V_i\in X}\sum_{V_j\in Y}\eta |V_i||V_j|=\eta|X'||Y'|$ (using that $\sum_{V_j\in Y}|V_j|=|Y'|$, $\sum_{V_i\in X}|V_j|=|X'|$ for the last equation). But by $q$-cut-density, we have $e_G(X', Y')\geq q|X'||Y'|> \eta |X'||Y'|$, a contradiction.
\end{proof}

We'll use a standard regularity inheritance lemma.
\begin{fact}\label{fact:regularity_inheritance} 
Let $\varepsilon<1/8$.
Let $(A,B)$ be an $(\varepsilon, \eta)$-regular pair, and $A'\subseteq A, B'\subseteq B$ be subsets of size $|A'|=|A|/2, |B'|=|B|/2$. Then $(A', B')$ is $(3\varepsilon, \eta/2)$-regular.
\end{fact}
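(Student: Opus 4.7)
The plan is to reduce $3\varepsilon$-regularity of the smaller pair directly to $\varepsilon$-regularity of the bigger pair, using the fact that halving only shrinks allowable test sets by a factor of two, which is comfortably absorbed by the slack $3\varepsilon/\varepsilon>1$. Let $d:=e_G(A,B)/(|A||B|)$ be the original density, and let $d':=e_G(A',B')/(|A'||B'|)$ be the new one.

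First, I would establish the density bound. Since $|A'|=|A|/2\ge\varepsilon|A|$ and $|B'|=|B|/2\ge\varepsilon|B|$ (using $\varepsilon<1/8<1/2$), the pair $(A',B')$ is itself a valid witness pair in the regularity condition on $(A,B)$. Hence $e_G(A',B')=(1\pm\varepsilon)|A'||B'|d$, so $d'=(1\pm\varepsilon)d$. In particular $d'\ge(1-\varepsilon)\eta\ge\tfrac{7}{8}\eta\ge\eta/2$, giving the $\eta/2$ density lower bound.

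Next, the regularity condition for $(A',B')$. Take any $A''\subseteq A'$, $B''\subseteq B'$ with $|A''|\ge 3\varepsilon|A'|$ and $|B''|\ge 3\varepsilon|B'|$. Then $|A''|\ge 3\varepsilon|A|/2\ge\varepsilon|A|$ and similarly $|B''|\ge\varepsilon|B|$, so $\varepsilon$-regularity of $(A,B)$ yields
\[
e_G(A'',B'')=(1\pm\varepsilon)|A''||B''|d.
\]
I would then convert the factor $(1\pm\varepsilon)d$ into a factor of $d'$: since $d=d'/(1\pm\varepsilon)$, the relevant multiplier lies in the interval $[(1-\varepsilon)/(1+\varepsilon),(1+\varepsilon)/(1-\varepsilon)]$, which for $\varepsilon<1/8$ is contained in $[1-3\varepsilon,1+3\varepsilon]$ (the upper bound follows from $(1+\varepsilon)\le(1+3\varepsilon)(1-\varepsilon)$, i.e. $3\varepsilon\le 1$; the lower bound is even easier). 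Therefore
\[
e_G(A'',B'')=(1\pm 3\varepsilon)|A''||B''|d',
\]
which is precisely the $3\varepsilon$-regularity condition.

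There is no serious obstacle here: the argument is a short two-step calculation (halving-is-still-a-large-fraction, then ratio of $d$ to $d'$). The only point needing a mild numerical check is the inclusion $[(1-\varepsilon)/(1+\varepsilon),(1+\varepsilon)/(1-\varepsilon)]\subseteq[1-3\varepsilon,1+3\varepsilon]$, which is where the hypothesis $\varepsilon<1/8$ is used.
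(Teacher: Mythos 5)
Your proposal is correct and follows essentially the same route as the paper's proof: use the half-sized sets as test sets to relate the densities $d$ and $d'$, observe that test sets for $(A',B')$ are still large enough to be test sets for $(A,B)$, and combine the two multiplicative errors into a $1\pm 3\varepsilon$ factor. The only cosmetic difference is that the paper verifies the condition for all $|A''|\ge 2\varepsilon|A'|$ (slightly more than needed) and bounds $d=(1\pm 3\varepsilon/2)d'$ rather than working with the ratio $(1+\varepsilon)/(1-\varepsilon)$ directly, but the arithmetic is equivalent.
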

\begin{proof}
By $\varepsilon$-regularity, we have $e(A',B')/|A'||B'|=(1\pm \varepsilon)e(A,B)/|A||B|$ which gives $e(A,B)/|A||B|=(1\pm 3\varepsilon/2)e(A',B')/|A'||B'|$ and $e(A',B')/|A'||B'|\geq (1-\varepsilon)\eta\geq \eta/2$. Let $A''\subseteq A', B''\subseteq B'$ have $|A''|\geq 2\varepsilon |A'|\geq \varepsilon |A|, |B''|\geq 2\varepsilon |B'|\geq \varepsilon|B|$. By $(\varepsilon, \eta)$-regularity, we have $e(A'', B'')/|A''||B''|=(1\pm \varepsilon)e(A,B)/|A||B|=(1\pm \varepsilon)(1\pm 3\varepsilon/2)e(A',B')/|A'||B'|=(1\pm 3\varepsilon)e(A',B')/|A'||B'|$.
\end{proof}

 A fractional matching in a graph is a function $f:E(G)\to[0,1]$ for which $\sum_{y\in N(v)}f(vy)\leq 1$ for all vertices $v$.  A fractional cover in a graph is a function $g:V(G)\to[0,1]$ for which $g(u)+g(v)\geq 1$ for all edges $uv$.
 The fractional matching number $\nu_f(G):=\max_{f \text{ fractional matching of G}} f(E(G))$ and the fractional cover number $\nu_f(G):=\max_{\text{$g$ fractional cover of G}} g(V(G))$ are known to be equal via linear programming duality. Moreover, it is known that optimal fractional matching/covers always exist whose values are always in the set $\{0, 1/2, 1\}$:
 \begin{theorem}[see \cite{scheinerman2013fractional}, Theorems~2.1.5 and~2.1.6]\label{thm:fractional_matching}
Let $G$ be a graph. There exists a fractional matching $f:E(G)\to \{0, 1/2, 1\}$ and a fractional cover $g:V(G)\to \{0, 1/2, 1\}$ so that $f(E(G))=\nu_f(G)=\tau_f(G)=g(V(G))$.
\end{theorem}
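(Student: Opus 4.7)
The plan is to derive both assertions from linear programming theory. Write $A$ for the edge--vertex incidence matrix of $G$. The fractional matching number $\nu_f(G)$ equals the optimum of the LP $\max\{\mathbf{1}^\top f : Af\le \mathbf{1},\, f\ge 0\}$, and its LP-dual is $\min\{\mathbf{1}^\top g : A^\top g\ge \mathbf{1},\, g\ge 0\}$. Any optimal dual solution may be truncated to $[0,1]$ without loss (if $g(v)>1$, resetting $g(v):=1$ preserves feasibility and weakly decreases the objective), so the dual optimum equals $\tau_f(G)$, and strong LP duality yields $\nu_f(G)=\tau_f(G)$.

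For the half-integrality I would show that every extreme point of the fractional matching polytope $P=\{f\ge 0 : Af\le \mathbf{1}\}$ has all coordinates in $\{0,1/2,1\}$. Let $f$ be an extreme point and let $H$ be the subgraph of edges $e$ with $0<f(e)<1$. If some edge of $H$ has $f(e)\neq 1/2$, the aim is to construct a perturbation $\delta:E(G)\to\mathbb{R}$ supported on $H$, not identically zero, with $f\pm \varepsilon \delta$ feasible for all small $\varepsilon>0$ --- contradicting extremality. A non-saturated vertex (one with $\sum_{e\ni v}f(e)<1$) can absorb arbitrary perturbation locally, so one may reduce to the case that every vertex of $H$ is saturated. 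Then non-trivial structures are ruled out one by one: an even cycle admits an alternating $\pm 1$ perturbation; two odd cycles joined by a path admit a theta-type perturbation with the two odd cycles traversed in compensating directions; any leaf of $H$ forces its endpoint to be non-saturated, returning to the previous case. What remains is that each component of $H$ is a vertex-disjoint union of odd cycles, on each of which the saturation equations force every edge weight to equal exactly $1/2$, as required.

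For the cover half-integrality I would either repeat the analogous extreme-point argument on the fractional cover polytope, or use complementary slackness with the half-integral $f$ produced above to pin down a half-integral optimal $g$. The main obstacle is the perturbation case analysis in the second step: exhibiting a feasible non-zero $\delta$ for every possible non-half-integral component structure. The key observation that makes the analysis tractable is that any slack in a saturation inequality at a vertex absorbs local perturbations, so one only has to rule out fractional supports in which every vertex of $H$ is tight --- a much smaller set of configurations, governed entirely by the cycle/tree structure of $H$.
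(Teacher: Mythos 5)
The paper does not prove this statement at all --- it is imported as a black box from the cited book of Scheinerman and Ullman (Theorems~2.1.5 and~2.1.6), so there is no in-paper argument to compare against; your proposal is the standard textbook proof (LP duality plus half-integrality of the extreme points of the two polyhedra), and as a plan it is sound. The duality half is fine: your truncation remark is correct, the matching polytope is bounded so the maximum is attained at a vertex, and the cover polyhedron is pointed so the minimum is too. Two places need more care than your sketch suggests. First, ``a non-saturated vertex can absorb arbitrary perturbation locally, so one may reduce to the case that every vertex of $H$ is saturated'' is not a reduction you can make up front: any perturbation supported on $H$ must be balanced at \emph{every} saturated vertex it meets, so a single non-saturated vertex buys you nothing by itself. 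What the argument actually needs is that each component of $H$ contains either an even closed walk or a path joining two ``absorbers'', where an absorber is a non-saturated vertex or an odd cycle (on which an alternating $\pm\varepsilon/2$ assignment changes the sum at exactly one vertex and nowhere else); the case analysis then closes because a connected graph with minimum degree $\ge 2$ and at most one cycle is a single cycle, and saturation around an odd cycle forces every weight to equal $1/2$. Second, the complementary-slackness route to a half-integral cover does not work as directly as you suggest: slackness forces $g(u)+g(v)=1$ on every support edge of $f$, which does pin $g\equiv 1/2$ on the odd cycles, but on an edge with $f(uv)=1$ it only gives $g(u)+g(v)=1$, not half-integrality of $g(u)$ and $g(v)$ separately. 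The clean way is the direct extreme-point argument for the cover polytope: if $g$ is optimal and not half-integral, set $V^{+}=g^{-1}((1/2,1))$ and $V^{-}=g^{-1}((0,1/2))$ and check that $g\pm\varepsilon(\chi_{V^{+}}-\chi_{V^{-}})$ are both feasible, one of them without increasing the objective. With these repairs your outline becomes a complete and correct proof of the cited theorem.
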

When working with regularity, it is well known that fractional matchings are as good as normal matchings.
\begin{lemma}\label{lem:matching_from_fractional_matching} 
Let $\varepsilon<1/2$.
Let $G$ be a graph and $R$ a $(\varepsilon, \eta)$-reduced graph of $G$ with $|R|\leq M_0$ in which the size of each part is even. Let $M$ be a fractional matching in $R$. Then there is a $(3\varepsilon, \eta/2)$-reduced graph $R'$ of $G$ with $|R|'=2|R|\leq 2M_0$, which has a matching $M'$ with $e(M')/|R'|=e(M')/2|R|=\nu_f(R)/|R|$.
\end{lemma}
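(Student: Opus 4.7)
The plan is to exploit the half-integral structure of optimal fractional matchings, together with the regularity-inheritance principle (Fact~\ref{fact:regularity_inheritance}), to convert half-integer weights into genuine matching edges by doubling the partition. First I apply Theorem~\ref{thm:fractional_matching} to replace $M$ by a half-integral fractional matching $f\colon E(R)\to\{0,1/2,1\}$ with $f(E(R))=\nu_f(R)$. A standard fact (see~\cite{scheinerman2013fractional}) is that the support of such an optimal $f$ may be taken to decompose into two vertex-disjoint pieces: a matching $N_1$ of the $f$-value-$1$ edges, and a disjoint union of odd cycles $C_1,\dots,C_s$ consisting of the $f$-value-$1/2$ edges. (Any $1/2$-weighted path or even cycle in the support could be replaced by an integer matching of equal or greater weight without affecting optimality, so we may assume none occur.) Hence $\nu_f(R)=|N_1|+\tfrac12\sum_{r=1}^{s}|V(C_r)|$.

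Next, because each $V_i$ has even size, I split it into two equal halves $V_i^1,V_i^2$. By Fact~\ref{fact:regularity_inheritance}, every resulting pair $(V_i^a,V_j^b)$ is $(3\varepsilon,\eta/2)$-regular, with density $\geq\eta/2$ whenever $V_iV_j\in E(R)$ and density $0$ otherwise. So the split partition is a $(3\varepsilon,\eta/2)$-regular partition with $2l$ equal parts, and its reduced graph $R'$ satisfies $|R'|=2|R|\leq 2M_0$; moreover, for every edge $V_iV_j$ of $R$, all four split-edges $V_i^aV_j^b$ with $a,b\in\{1,2\}$ lie in $R'$.

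Finally I assemble $M'$ from the structure of $f$. For each edge $V_iV_j\in N_1$, I place both $V_i^1V_j^1$ and $V_i^2V_j^2$ into $M'$. For each odd cycle $C_r=V_{i_1}V_{i_2}\cdots V_{i_{2k_r+1}}V_{i_1}$ I observe that
\[
V_{i_1}^1\,V_{i_2}^1\,\cdots\,V_{i_{2k_r+1}}^1\,V_{i_1}^2\,V_{i_2}^2\,\cdots\,V_{i_{2k_r+1}}^2\,V_{i_1}^1
\]
is an even cycle of length $4k_r+2$ in $R'$ (each consecutive pair corresponds to an edge of $C_r$ in $R$, and so the corresponding split-edge lies in $R'$), and I add a perfect matching of this cycle to $M'$, contributing $2k_r+1$ edges. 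The pieces are vertex-disjoint by construction, so $M'$ is a matching, and $e(M')=2|N_1|+\sum_{r}(2k_r+1)=2\nu_f(R)$, giving $e(M')/|R'|=\nu_f(R)/|R|$ as required. The one conceptual trick — more a point of recognition than a technical obstacle — is that doubling each cluster turns the parity-obstructed odd cycles of the fractional optimum into even cycles of twice the length, absorbing the $1/2$-loss; the rest is routine bookkeeping with Fact~\ref{fact:regularity_inheritance}.
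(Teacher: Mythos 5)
Your proposal is correct, and its skeleton matches the paper's: halve each cluster, invoke Fact~\ref{fact:regularity_inheritance} to get the reduced graph $R'$ on $2|R|$ clusters, take an optimal half-integral fractional matching via Theorem~\ref{thm:fractional_matching}, and convert it into an integral matching of twice the weight. The one step you do differently is the conversion. The paper observes only that the $1/2$-weighted edges form a subgraph of maximum degree $2$ (a disjoint union of paths and cycles), orients these paths and cycles arbitrarily, doubles each weight-$1$ edge into two opposite arcs, and then matches $V_i^-$ to $V_j^+$ for each arc $V_iV_j$; in- and out-degree at most $1$ guarantees a matching. You instead first reduce to the case where the half-support is a disjoint union of odd cycles (replacing half-weighted paths and even cycles by integral matchings of no smaller weight -- a valid reduction, since the half-support has maximum degree $2$ and is vertex-disjoint from the weight-$1$ edges, though it is an extra step the paper does not need), and then lift each odd cycle to an even cycle of twice the length in $R'$ and take its perfect matching. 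Both arguments are sound and give $e(M')=2\nu_f(R)$; the paper's orientation trick is slightly more economical in that it handles paths and even cycles directly, while yours makes the parity phenomenon (odd cycles being the sole obstruction to integrality) more visible.
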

\begin{proof}
Let $V_1, \dots, V_l$ be the partition giving rise to $R$. For each $i$, fix some partition $V_i=V_i^-\cup V_i^+$ into sets of equal size. Using Fact~\ref{fact:regularity_inheritance}, the partition of $G$ into   $V_1^-, V_1^+, \dots, V_l^-, V_l^+$ is $(3\varepsilon, \eta/2)$-regular, and hence we can define a corresponding $(3\varepsilon, \eta/2)$-reduced graph $R'$, noting that $|R'|=2l=2|R|$. Use Theorem~\ref{thm:fractional_matching} to get a fractional matching $f:E(R)\to \{0,1/2,1\}$    with $f(E(R))=\nu_f(R)$. Let $A=f^{-1}(1)$ and $B=f^{-1}(1/2)$, noting that $e(A)+e(B)/2=f(E(R))=\nu_f(R)$. Note that the condition ``$\sum_{y\in N(v)}f(vy)\leq 1$ for all vertices $v$'' ensures that in the graph $A\cup B$, vertices in $V(A)$ touch precisely one edge of $A$, while vertices in $V(B)$ touch at most two edges of $B$. This shows that $A\cup B$ is a vertex-disjoint union of a matching $A$ and a maximum degree $2$ subgraph $B$. Hence $B$ must be a union of paths and cycles. Construct a directed graph $D$ by having two copies of each edge in $A$ (one in each direction), and orienting the path/cycles in $B$ arbitrarily. Note that all vertices in $D$ have indegree/outdegree $\leq 1$ and $e(D)=2e(A)+e(B)$.
Let $M=\{V_i^-V_j^+: V_iV_j\in E(D)\}$, noting that this is a matching in $R'$ (there cannot be two edges through any $V_i^-$ since $V_i$ has outdegree $\leq 1$, and there cannot two edges through any $V_j^+$ since $V_j$ has indegree $\leq 1$). Also $e(M)=e(D)=2e(A)+e(B)=2f(E(R))=2\nu_f(R)$, and so $M$ satisfies the lemma.
\end{proof}
 
\subsection{The method of Besomi, Pavez-Sign\'e, and Stein}
Our tool for embedding trees in regularity partitions will be the following lemma of Besomi, Pavez-Sign\'e, and Stein. 
\begin{lemma}[Besomi, Pavez-Sign\'e, Stein, \cite{besomi2019degree}]\label{lem:stein_tree_embedding}
Let $M_0^{-1}, \Delta, \varepsilon\gg n^{-1}, k^{-1}$.
Let $T$ be a tree with $\Delta(T)\leq \Delta$ and parts of size $k_1, k_2$ having $k_1+k_2=k$.
Let $G$ be a graph and $R$ a $(\varepsilon, 5\sqrt{\varepsilon})$-reduced graph of $G$ with $|R|\leq M_0$. Suppose that $R$ is connected and satisfies one of:
\begin{enumerate}[(1)]
\item $R$ is bipartite with parts $X,Y$ and there is a subset $X'\subseteq X$ such that $|X'|\geq (1+100\sqrt{\varepsilon})k_1|R|/n$ and $\delta(X')\geq  (1+100\sqrt{\varepsilon})k_2|R|/n$. 
\item $R$ is non-biparite and has a matching with $\geq  (1+100\sqrt{\varepsilon})k|R|/n$ vertices.
\end{enumerate}
Then $G$ as a copy $T$.
\end{lemma}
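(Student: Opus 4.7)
The plan is to prove both parts using the standard greedy embedding strategy in regular partitions, where one processes vertices of $T$ in breadth-first order and, at each step, picks an image from a candidate set that is the common neighborhood of images of previously embedded parents. I would first pass to subsets $V_i^* \subseteq V_i$ of each cluster consisting of ``typical'' vertices, namely those whose neighborhoods in each neighboring cluster of $R$ behave as $\varepsilon$-regularity guarantees; this discards at most an $\varepsilon$-fraction of each cluster, and afterwards every candidate set arising during the embedding shrinks by only a controlled factor at each step. Since $\Delta(T)\le \Delta$ and the density threshold is $5\sqrt{\varepsilon}$, iterating this shrinkage keeps candidate sets of size $\Omega((5\sqrt{\varepsilon})^{\Delta}\cdot n/|R|)$ throughout the process, which is enormous compared to anything we actually need.

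In case (1), I would root $T$ at a vertex of the bipartition class of size $k_1$ and assign all tree vertices of that class to be embedded into clusters of $X'$ and the other class into clusters of $Y$. The bound $|X'|\ge (1+100\sqrt{\varepsilon})k_1|R|/n$ gives total available capacity $\ge (1+100\sqrt{\varepsilon})k_1$ inside $X'$, which comfortably accommodates $k_1$ images even after the $\varepsilon$-loss from typical-vertex cleanup. The minimum degree condition $\delta(X')\ge (1+100\sqrt{\varepsilon})k_2|R|/n$ plays the symmetric role for $Y$: every $X'$-cluster used in the embedding has enough $Y$-neighbor clusters to house the required $k_2$-side vertices attached via $T$-edges. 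A pre-computed fractional assignment of tree vertices to clusters (performed before the greedy embedding begins) keeps the bookkeeping precise and ensures no cluster is ever overloaded.

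In case (2), I would fix a matching $M$ in $R$ covering at least $(1+100\sqrt{\varepsilon})k|R|/n$ cluster vertices, and cut $T$ into a small root portion plus a collection of ``fragments'' of bounded size (using $\Delta(T)\le \Delta$ and standard tree decomposition). The fragments are embedded across matched regular pairs in an alternating fashion, with the occasional ``connector'' edge routed between matching edges via a walk in the connected reduced graph $R$. Here the non-bipartite hypothesis on $R$ is essential: if the natural $2$-coloring induced by a walk in $R$ disagrees with $T$'s bipartition at a fragment boundary, we reroute the walk through an odd cycle in $R$ to correct the parity, which increases walk length by at most $|R|\le M_0$ and so costs a negligible amount of cluster capacity. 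The $(1+100\sqrt{\varepsilon})k$ total matching capacity covers both bipartition classes of $T$ with room to spare.

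The main obstacle, in my view, is the second case — specifically, the combinatorial planning of which matching edges absorb which portions of $T$ and in what order, while respecting both parity (bipartition) and capacity constraints simultaneously. Once this allocation is settled, the actual embedding is a routine regularity argument: the $(1+100\sqrt{\varepsilon})$ slack dominates every lower-order error (typical-vertex cleanup, parity-correction walks, and the $\varepsilon$-regularity loss incurred at each greedy step), while the bound $\Delta(T)\le \Delta$ keeps every candidate set polynomially large in the cluster size throughout.
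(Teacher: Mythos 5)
The paper does not prove this lemma at all: it is imported verbatim as a black box from Besomi, Pavez-Sign\'e and Stein \cite{besomi2019degree}, so there is no in-paper argument to compare yours against. Judged on its own, your proposal is a reasonable outline of the standard regularity embedding scheme (typical vertices, greedy extension through common neighbourhoods, tree decomposition into fragments, parity correction via an odd cycle), but it stops short of the actual content of the lemma. The step you defer --- ``a pre-computed fractional assignment of tree vertices to clusters'' in case (1) and ``the combinatorial planning of which matching edges absorb which portions of $T$'' in case (2) --- is not bookkeeping; it is the theorem. With only a $(1+100\sqrt{\varepsilon})$ multiplicative slack, the clusters must be filled to within an $O(\sqrt{\varepsilon})$ fraction of their capacity, and one must show that the two colour classes of $T$ (which may be very unbalanced, $k_1\neq k_2$) can be distributed over the matching edges, respectively over $X'$ and the union of the neighbourhoods $N_R(X')$, so that no single cluster is overloaded and every tree vertex lands in a cluster adjacent to its parent's cluster. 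In case (1) in particular there is no symmetric degree condition from $Y$ back to $X'$ and no lower bound on $|Y|$ beyond what $\delta(X')$ implies, so the $Y$-side capacity available to a given branch of $T$ can be exactly $(1+100\sqrt{\varepsilon})k_2$ with no room for a lossy greedy allocation; a Hall-type or flow argument is genuinely required.

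A second, related gap is your claim that candidate sets of size $\Omega\bigl((5\sqrt{\varepsilon})^{\Delta} n/|R|\bigr)$ are ``enormous compared to anything we actually need.'' When a cluster is filled to a $(1-O(\sqrt{\varepsilon}))$ fraction of its capacity, a candidate set of density $5\sqrt{\varepsilon}$ inside that cluster need not contain any unused vertex, so the naive greedy step can fail precisely in the endgame. The standard repairs (choosing each image to be typical with respect to the \emph{current} free sets of the clusters that will host its children, and reserving leaves or bare paths of $T$ to be embedded last by a separate matching argument) are exactly what makes the cited lemma nontrivial, and they interact with the allocation problem above. So while your high-level plan points in the right direction, it does not yet constitute a proof of the statement.
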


We'll reframe this lemma in two ways in order to fit better with our approach of studying dense graphs via cut-density. 
The following reframing concerns part 2 of Lemma~\ref{lem:stein_tree_embedding}
\begin{lemma}\label{lem:regularity_tree_embedding_bipartite} 
Let $\Delta^{-1}, q, \gg \varepsilon\gg  k^{-1}$.
Let $T$ be a tree with $\Delta(T)\leq \Delta$ and parts of size $k_1, k_2$ having $k_1+k_2=k$.
Let $G$ be a  bipartite graph which is $q$-cut-dense with parts $X,Y$ such that $X$ contains $\geq k_1+400\sqrt{\varepsilon}k$  vertices of degree  $\geq  k_2+400\sqrt{\varepsilon}k$. Then $G$ contains a copy of $T$. 
\end{lemma}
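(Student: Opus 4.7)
My plan is to derive this from Lemma~\ref{lem:stein_tree_embedding}(1) via a straightforward regularization of $G$; the only subtlety is that we must first bound $n$ in terms of $k$ to keep the regularity error terms small compared with the slack $400\sqrt\varepsilon k$. Specifically, if $\delta(G)\geq k$, then picking any $v^*\in X$ with $d_G(v^*)\geq k_2+400\sqrt\varepsilon k\geq \Delta(T)$ (which exists by hypothesis, using $k\gg\Delta^{-1}$), the minimum degree of $G-v^*$ is $\geq k-1=|E(T)|$ and Fact~\ref{fact:greedy_tree_embedding} directly produces the desired copy of $T$. So I may assume $\delta(G)<k$; combined with $\delta(G)\geq 0.9qn$ from Fact~\ref{fact:cut_dense_min_degree}, this gives $n\leq 2k/q$.

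Next, choose $\varepsilon'$ with $k^{-1}\ll\varepsilon'\leq q^2\varepsilon/10^6$, and set $\eta':=5\sqrt{\varepsilon'}$. I would apply Theorem~\ref{thm:regularity_lemma} with parameters $\varepsilon',\eta'$ and initial partition $\{X,Y\}$, obtaining a subgraph $G'\subseteq G$ with $d_G(v)-d_{G'}(v)\leq(\varepsilon'+\eta')n$ for all $v\in V(G')$ and an $(\varepsilon',\eta')$-regularity partition $V_1,\dots,V_l$ refining $\{X,Y\}$. Since each $V_i\subseteq X$ or $V_i\subseteq Y$, the reduced graph $R$ is bipartite with parts $X_R:=\{V_i\subseteq X\}$ and $Y_R:=\{V_i\subseteq Y\}$; by Fact~\ref{fact:cut_dense_preserved_by_regularity}, $G'$ remains $(q-2(\varepsilon'+\eta'))$-cut-dense, and so Lemma~\ref{lem:cut_dense_connected_cluster_graph} ensures $R$ is connected.

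To build the subset required by Lemma~\ref{lem:stein_tree_embedding}(1), let $X^*\subseteq X$ be the $\geq k_1+400\sqrt\varepsilon k$ vertices of degree $\geq k_2+400\sqrt\varepsilon k$ in $G$, and set $X':=\{V_i\in X_R:V_i\cap X^*\cap V(G')\neq\emptyset\}$. Each cluster has size $|V(G')|/l\leq n/l$, so $|X'|\geq(|X^*|-\varepsilon' n)\cdot l/|V(G')|$; moreover for any $V_i\in X'$ and $v\in V_i\cap X^*\cap V(G')$, the neighbours of $v$ in $G'$ lie in clusters adjacent to $V_i$ in $R$, giving $d_R(V_i)\geq d_{G'}(v)\cdot l/|V(G')|\geq\bigl(k_2+400\sqrt\varepsilon k-(\varepsilon'+\eta')n\bigr)\cdot l/|V(G')|$. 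Using $n\leq 2k/q$ and $\varepsilon'+\eta'\leq 6\sqrt{\varepsilon'}$, the loss $(\varepsilon'+\eta')n\leq 12\sqrt{\varepsilon'}k/q$ is dwarfed by the slack $400\sqrt\varepsilon k$, so both $|X'|$ and the minimum degree from $X'$ into $Y_R$ comfortably exceed the threshold $(1+100\sqrt{\varepsilon'})k_j\cdot l/|V(G')|$ demanded by Lemma~\ref{lem:stein_tree_embedding}(1), and that lemma then produces a copy of $T$ in $G'\subseteq G$. The main technical point is exactly this calibration: we need $(\varepsilon'+\eta')n\ll\sqrt\varepsilon k$ even though $n$ may be as large as $2k/q$, which forces $\sqrt{\varepsilon'}$ to be of order $q\sqrt\varepsilon$; the initial greedy-case dichotomy is precisely what makes such a choice of $\varepsilon'$ possible.
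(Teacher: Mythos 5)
Your proposal is correct and follows essentially the same route as the paper's proof: dispose of the case where $G$ is large via Fact~\ref{fact:cut_dense_min_degree} and greedy embedding (so that $n=O(k/q)$ and the regularity error terms are dominated by the $400\sqrt{\varepsilon}k$ slack), then regularize, use cut-density to get a connected bipartite reduced graph, and verify the hypotheses of Lemma~\ref{lem:stein_tree_embedding}(1). The only differences are cosmetic bookkeeping — the paper refines the partition $\{X',X\setminus X',Y\}$ and works with the original $\varepsilon$ rather than selecting clusters meeting $X^*$ with an auxiliary $\varepsilon'\le q^2\varepsilon/10^6$ — and both calibrations are valid.
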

\begin{proof}
Let $\Delta^{-1}, q, \gg \varepsilon\gg m^{-1}\gg M^{-1}\gg  k^{-1}$.
Let $n_0:=|G|$, noting $n_0=|X|+|Y|\geq |X|+\Delta(X)\geq k_1+400\sqrt{\varepsilon}k+k_2+400\sqrt{\varepsilon}k= k(1+800\sqrt{\varepsilon})$. If $k\leq qn_0/4$, then note that Fact~\ref{fact:cut_dense_min_degree} gives $\delta(G)\geq 0.9qn$, and so we can greedily find $T$ using Fact~\ref{fact:greedy_tree_embedding}. Thus suppose $k> qn_0/4$. 
Let $X'\subseteq X$ be the set of $\geq k_1+400\sqrt{\varepsilon}k$  vertices of degree  $\geq  k_2+400\sqrt{\varepsilon}k$. 
Use Szemer\'edi's Regularity Lemma to get a subgraph $G'$ of $G$ and a $(\varepsilon, 5\sqrt{\varepsilon})$-regularity partition with $s+t\in [m,M]$ parts  $X_1, \dots, X_s, Y_1, \dots, Y_t$  refining the partition $\{X', X\setminus X', Y\}$, labelled so that $X_1, \dots, X_r\subseteq X', X_{r+1}, \dots, X_s\subseteq X\setminus X', Y_1, \dots, Y_t\subseteq Y$. Let $n:=|G'|$, noting that $n\geq(1-\varepsilon)n_0\geq (1-\varepsilon)(1+800\sqrt{\varepsilon})k\geq k$, which in particular gives $\Delta^{-1}, q, \varepsilon, m^{-1}, M^{-1}\gg n^{-1}$.
Let $R$ be the reduced graph of this partition. By Fact~\ref{fact:cut_dense_preserved_by_regularity}, $G'$ is $(q-2(\varepsilon+5\sqrt{\varepsilon}))\geq q/2$-cut-dense, and so by Lemma~\ref{lem:cut_dense_connected_cluster_graph}, $R$ is connected. Note that all $X_iX_j$ and $Y_iY_j$ must be non-edges in this partition because there are no edges between these pairs in $G'$ (since there are none in $G$). 
We have that $s\geq (|X'|-\varepsilon n_0)|R|/n\geq  (k_1+400\sqrt{\varepsilon}k-\varepsilon n_0)|R|/n\overset{k> qn_0/4}{\geq}  (k_1+400\sqrt{\varepsilon}k-4\varepsilon k/q)|R|/n {\ge} (k_1+(400\sqrt{\varepsilon}-4\varepsilon /q) k_1 )|R|/n \geq (1+100\sqrt{\varepsilon})k_1|R|/n$.
For $i\leq s$, note that $|N_R(X_i)|(n/|R|)^2+|R\setminus N_R(X_i)|(5\sqrt {\varepsilon})(n/|R|)^2\geq e_{G'}(X_i, V(G))$. Using ``$d_G(v)-d_{G'}(v)\leq \varepsilon n$'', we also have $e_{G'}(X_i, V(G))= \sum_{v\in X_i}d_{G'}(v)\geq \sum_{v\in X_i}(d_G(v)-\varepsilon n)\geq \sum_{v\in X_i}( k_2+400\sqrt{\varepsilon}n-\varepsilon n)\geq k_2n/|R|+399\sqrt{\varepsilon}n^2/|R|$.
Combining gives $|N_R(X_i)|\geq \frac{k_2n/|R|+399\sqrt{\varepsilon}n^2/|R|- |R\setminus N_R(X_i)|(5\sqrt{\varepsilon})(n/|R|)^2}{(n/|R|)^2} = k_2|R|/n+399\sqrt{\varepsilon}|R|-|R\setminus N_R(X_i)|(5\sqrt{\varepsilon}) \geq  k_2|R|/n+(399\sqrt{\varepsilon}-5\sqrt{\varepsilon})|R|\overset{n\ge k_2}{\geq} (1+100\sqrt{\varepsilon})k_2|R|/n$.
Now, Lemma~\ref{lem:stein_tree_embedding} (1) applies to give a copy of $T$.
\end{proof}

The following is the second reframing of the Besomi, Pavez-Sign\'e, Stein lemma that we'll use. 
\begin{lemma}\label{lem:regularity_tree_embedding_nonbipartite} 
Let $\Delta, q, \delta\gg \varepsilon\gg   k^{-1}$.
Let $T$ be a tree with $\Delta(T)\leq \Delta$ and parts of size $k_1, k_2$ having $k_1+k_2=k$.
Let $G$ be a order $n_0$ graph which is $q$-cut-dense which has no copy of $T$. Then one can delete $\delta n_0^2$ edges from $G$ to get  graph $H$ satisfying one of:
\begin{enumerate}[(i)]
\item $H$ is bipartite and $q/2$-cut-dense.
\item There are subsets $C_1, C_2\subseteq V(G)$ with $2|C_1|+|C_2|\leq (1+100\sqrt{\varepsilon})k$ such that all edges of $H$ either touch $C_1$ or are contained in $C_2$.
\end{enumerate}
\end{lemma}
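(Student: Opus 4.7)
The strategy is to apply Szemer\'edi's Regularity Lemma (Theorem~\ref{thm:regularity_lemma}) to $G$ with auxiliary parameters $\varepsilon_0\ll \varepsilon,q,\delta$ and density threshold $\eta_0:=5\sqrt{\varepsilon_0}$, obtaining a subgraph $G'\subseteq G$ on $n\geq(1-\varepsilon_0)n_0$ vertices, an $(\varepsilon_0,\eta_0)$-regular partition $V_1,\dots,V_l$, and reduced graph $R$. By Fact~\ref{fact:cut_dense_preserved_by_regularity}, $G'$ stays $(q-O(\eta_0))\geq q/2$-cut-dense, and then Lemma~\ref{lem:cut_dense_connected_cluster_graph} makes $R$ connected. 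The proof splits on whether $R$ is bipartite.

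\textbf{Case~(ii): $R$ non-bipartite.} Apply Lemma~\ref{lem:matching_from_fractional_matching} to obtain a $(3\varepsilon_0,\eta_0/2)$-reduced graph $R'$ of $G$ with $|R'|=2|R|$ and a matching $M'$ of exactly $2\nu_f(R)$ edges. Splitting each cluster of $R$ in two preserves any odd cycle of $R$ (by picking one of the two copies at each step), so $R'$ is also non-bipartite. If $\nu_f(R) > (1+100\sqrt{3\varepsilon_0})k|R|/(2n)$, then $M'$ has more than $(1+100\sqrt{3\varepsilon_0})k|R'|/n$ vertices and Lemma~\ref{lem:stein_tree_embedding}(2), applied to $R'$ with its $\varepsilon$-parameter equal to $3\varepsilon_0$, would embed $T$ into $G$ --- contradicting $T\not\subseteq G$. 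Thus $\nu_f(R)\leq (1+100\sqrt{3\varepsilon_0})k|R|/(2n)$. Theorem~\ref{thm:fractional_matching} then furnishes a fractional cover $g\colon V(R)\to\{0,\tfrac12,1\}$ with $g(V(R))=\nu_f(R)$. Setting $C_1^R=g^{-1}(1)$, $C_2^R=g^{-1}(\tfrac12)$, and lifting to $C_1=\bigcup_{V_i\in C_1^R}V_i$, $C_2=\bigcup_{V_j\in C_2^R}V_j$ in $V(G)$, the equal cluster sizes yield $2|C_1|+|C_2| = 2g(V(R))(n/|R|)\leq (1+100\sqrt{3\varepsilon_0})k\leq (1+100\sqrt{\varepsilon})k$ for $\varepsilon_0$ small enough. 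Let $H$ be $G'$ restricted to edges whose endpoints lie in some pair $(V_i,V_j)\in E(R)$; the deleted edges consist of those lost from $G$ to $G'$ (at most $(2\varepsilon_0+\eta_0)n_0^2$) and those of non-$R$-pairs in $G'$ (at most $\eta_0 n_0^2/2$), comfortably below $\delta n_0^2$. Finally, for any edge $uv\in H$ with $u\in V_i$, $v\in V_j$, half-integrality and $g(V_i)+g(V_j)\geq 1$ force either $V_i\in C_1^R$, or $V_j\in C_1^R$, or $V_i,V_j\in C_2^R$; in each case $uv$ touches $C_1$ or is contained in $C_2$.

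\textbf{Case~(i): $R$ bipartite.} Let $X_R,Y_R$ be the bipartition of $R$ and $X,Y$ the corresponding unions of clusters in $V(G')$. All edges of $G'$ inside $X$ or inside $Y$ arise from non-$R$-pairs, hence number at most $\eta_0 n_0^2$. Call a vertex \emph{atypical} if it has more than $2\sqrt{\eta_0}\,n_0$ edges to its own side; Markov's inequality bounds the atypical count by $\sqrt{\eta_0}\,n_0$. Let $H$ be the bipartite subgraph of $G'$ obtained by deleting the atypical vertices and keeping only $X$--$Y$ edges. The total number of deleted edges is at most $(2\varepsilon_0+\eta_0)n_0^2 + \sqrt{\eta_0}\,n_0^2 + \eta_0 n_0^2 \leq \delta n_0^2$. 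For the $q/2$-cut-density of $H$: given any partition $V(H)=A\cup B$ with $|A|\leq|B|$, extend by placing the missing vertices arbitrarily to obtain a partition of $V(G)$; $q$-cut-density of $G$, the smallness of $V(G)\setminus V(H)$, and the per-vertex loss bound $d_G(v)-d_H(v)=O(\sqrt{\eta_0})\,n_0$ for typical $v$ combine to give $e_H(A,B)\geq q|A||B|-O(\sqrt{\eta_0})\,n_0|A|$. Since $|B|\geq |V(H)|/2\geq n_0/4$, this is at least $(q/2)|A||B|$ as required.

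\textbf{Main obstacle.} The delicate step is the cut-density of $H$ in Case~(i). Regularity controls same-side edges only in aggregate, so a handful of abnormally-connected vertices could otherwise destroy the minimum-degree hypothesis of Fact~\ref{fact:cut_dense_preserved_by_regularity}; the atypical-vertex cleanup is precisely what averts this. Case~(ii) is more mechanical, the only subtle point being that the regularity parameters of $R'$ coming out of Lemma~\ref{lem:matching_from_fractional_matching} must be verified to fit the hypotheses of Lemma~\ref{lem:stein_tree_embedding} after renaming that lemma's $\varepsilon$ to $3\varepsilon_0$.
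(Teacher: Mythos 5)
Your proposal follows essentially the same route as the paper: regularity lemma, cut-density of $G'$ via Fact~\ref{fact:cut_dense_preserved_by_regularity}, connectivity of $R$ via Lemma~\ref{lem:cut_dense_connected_cluster_graph}, then the dichotomy between a large fractional matching (converted to a genuine matching in the doubled reduced graph $R'$ and fed into Lemma~\ref{lem:stein_tree_embedding}(2)) and a small fractional cover (lifted to $C_1,C_2$ via Theorem~\ref{thm:fractional_matching}). Your reorganisation of the case split --- branching first on whether $R$ is bipartite, and observing that the $2$-blow-up $R'$ inherits any odd cycle of $R$ --- is valid and arguably cleaner than the paper's (which only discovers bipartiteness after producing the matching). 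Two remarks: first, under this paper's definition of a regularity partition the parts are independent and non-edges of $R$ correspond to pairs of density exactly $0$, so in your Case~(i) the graph $G'$ is already bipartite and the atypical-vertex cleanup is redundant (though harmless); second, the $(3\varepsilon_0,\eta_0/2)$ versus $(\varepsilon',5\sqrt{\varepsilon'})$ parameter mismatch you flag is real but is equally present in the paper's own invocation, so it is not a defect specific to your argument.

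There is one concrete gap: you apply the Regularity Lemma and Lemma~\ref{lem:stein_tree_embedding} unconditionally, but the hypotheses here only give $\varepsilon\gg k^{-1}$, not $\varepsilon\gg n_0^{-1}$, so $G$ may be far too small for any of this machinery (e.g.\ $n_0$ bounded while $k\to\infty$); Lemma~\ref{lem:stein_tree_embedding} explicitly requires $\varepsilon\gg n^{-1}$, and a partition into $[m,M]$ parts needs $n_0\geq m$. The paper dispatches this up front: if $n_0\leq(1+100\sqrt{\varepsilon})k$ then taking $C_1=\emptyset$, $C_2=V(G)$ satisfies (ii) with no deletions, and otherwise $n_0>k$ so the hierarchy $\varepsilon\gg k^{-1}\geq n_0^{-1}$ holds and the regularity argument is legitimate. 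You should add this reduction at the start; with it, your proof goes through.
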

\begin{proof}
Pick $\Delta, q\gg \varepsilon\gg m^{-1}\gg M^{-1}\gg  k^{-1}$.
If $n_0\leq (1+100\sqrt{\varepsilon})k$, then setting $C_2=V(G)$ gives the structure for (ii), so suppose $n_0> (1+100\sqrt{\varepsilon})k$.
Apply the Szemeredi's Regularity Lemma in order to get a subgraph $G'$ and a $(\varepsilon/6, 4\sqrt{\varepsilon})$-regularity partition with $t\in [m,M]$ parts $V_1, \dots, V_t$. Denote by $R$ the corresponding $(\varepsilon/4, 4\sqrt{\varepsilon})$-reduced graph with $|R|=t$. From the remark after the statement of the regularity lemma, we have $e(G\setminus G')\leq (2\varepsilon/6+4\sqrt{\varepsilon})n_0^2\ll \delta n_0^2$. Let $n:=|V(G')|\in[(1-\varepsilon)n_0, n_0]$, noting that each part has $|V_i|=n/|R|$ and that  $n\geq(1-\varepsilon)n_0\geq (1-\varepsilon)(1+100\sqrt{\varepsilon})k\ge  k$, giving $\Delta, q\gg \varepsilon, m^{-1}, M^{-1}\gg n^{-1}$.   By Fact~\ref{fact:cut_dense_preserved_by_regularity}, $G'$ is $(q-2(4\sqrt{\varepsilon}+\varepsilon/4))\geq (q-9\sqrt{\varepsilon})\geq q/2$-cut-dense. By Lemma~\ref{lem:cut_dense_connected_cluster_graph}, $R$ is connected (since $q\gg 9\sqrt{\varepsilon}$).  

Suppose that $\nu_f(R)\geq (1+100\sqrt{\varepsilon})k|R|/2n$. Apply Lemma~\ref{lem:matching_from_fractional_matching} to get a $(\varepsilon/2, 2\sqrt{\varepsilon})$-reduced graph $R'$ of $G'$ which has a matching with $|V(M)|=2\nu(R')\geq 2|R'|\frac{\nu_f(R)}{|R|}\geq (1+100\sqrt{\varepsilon})k|R'|/n$. If $R'$ is bipartite, then so is $G'$ (if $V(R')=X\cup Y$ is a bipartition of $R'$, then $X':=\bigcup_{U\in X}U, Y'=\bigcup_{U\in Y}U$ gives a bipartition of $G'$) and so we are in (i). Otherwise, note that by Lemma~\ref{lem:cut_dense_connected_cluster_graph}, $R'$ is again connected, and so Lemma~\ref{lem:stein_tree_embedding} gives a copy of $T$.

Suppose that $R$  has $\nu_f(R)< (1+100\sqrt{\varepsilon})k|R|/2n$.  By Theorem~\ref{thm:fractional_matching}, there is a fractional cover $g:V(R)\to \{0, 1/2, 1\}$ with $g(V(R))=\tau_f(R)=\nu_f(R)\leq (1+100\sqrt{\varepsilon})k|R|/2n$. Let $D_1=g^{-1}(1)$ and $D_2=g^{-1}(1/2)$, noting that $|D_1|+|D_2|/2=g(V(G))\leq(1+100\sqrt{\varepsilon})k|R|/2n$. Also note that every edge $xy$ of $R$ must either touch $D_1$ or be contained in $D_2$ (by definition of fractional cover $g(x)+g(y)\geq 1$, meaning that either $g(x)=g(y)=1/2$ or at least one of $g(x)$/$g(y)$ equals $1$).
   Let $C_1=\bigcup_{V_i\in D_1} V_i$, $C_2=\bigcup_{V_j\in D_2} V_j$ be the corresponding subsets of $G'$. 
Since every edge $xy\in G'$  is contained in some pair $(V_i,V_j)$ with $V_iV_j\in E(R)$, we have that $xy$ either touches $C_1$ or is contained in $C_2$. Note that $2|C_1|+|C_2|=(2|D_1|+|D_2|)n/|R|\leq (1+100\sqrt{\varepsilon})k$.
\end{proof}

\subsection{Stability lemmas}
We'll need the following auxiliary lemma.
\begin{lemma}\label{lem:find_good_graph} 
Let $G$ be a bipartite graph with parts $X,Y$ having $|Y|\geq C^2k$ and $e(G)\geq k|Y|$. There is a subgraph $H$ with $|H\cap X|= k-|X|/C$,  $|H\cap Y|= C^2k$, and $\delta_H(H\cap X)\geq Ck/4$.
\end{lemma}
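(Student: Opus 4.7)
The plan is a standard two-step argument: first sample a random subset $Y' \subseteq Y$ of the prescribed size, then throw away $X$-vertices whose degree into $Y'$ is too small, and argue that not too many $X$-vertices are lost.

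First I would pick $Y'\subseteq Y$ with $|Y'|=C^2k$ uniformly at random (which is possible since $|Y|\ge C^2k$). Each edge of $G$ is retained in $G[X,Y']$ with probability $|Y'|/|Y|$, so by linearity of expectation
\[
\mathbb{E}[e_G(X,Y')]\;=\;\frac{C^2k}{|Y|}\,e(G)\;\ge\;\frac{C^2k}{|Y|}\cdot k|Y|\;=\;C^2k^2.
\]
Fix a choice of $Y'$ achieving at least this expectation.

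Next I would set $X^*:=\{x\in X:|N_G(x)\cap Y'|\ge Ck/4\}$, i.e.\ the vertices of $X$ with large degree into $Y'$. Edges between $X\setminus X^*$ and $Y'$ number at most $|X|\cdot Ck/4$, so $e_G(X^*,Y')\ge C^2k^2-Ck|X|/4$. On the other hand each vertex of $X^*$ contributes at most $|Y'|=C^2k$ edges, which gives
\[
|X^*|\;\ge\;\frac{C^2k^2-Ck|X|/4}{C^2k}\;=\;k-\frac{|X|}{4C}\;\ge\;k-\frac{|X|}{C}.
\]
Pick any $X''\subseteq X^*$ of size exactly $k-|X|/C$ (rounding where necessary and noting that the conclusion is vacuous if $k-|X|/C<0$), and set $H:=G[X''\cup Y']$. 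Then $|H\cap X|=k-|X|/C$ and $|H\cap Y|=C^2k$ by construction, and for each $x\in X''\subseteq X^*$ we have $|N_H(x)|=|N_G(x)\cap Y'|\ge Ck/4$, yielding $\delta_H(H\cap X)\ge Ck/4$.

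There isn't really a ``hard part'' here — the only point to be careful about is the arithmetic in the lower bound on $|X^*|$: the $1/(4C)$ coming from the deletion step must beat the $1/C$ in the target, which it does comfortably. Everything else is bookkeeping (integrality of $C^2k$ and $k-|X|/C$, and the implicit assumption $k\ge |X|/C$) of the kind typically absorbed into the hierarchy conventions of the paper.
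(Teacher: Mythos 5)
Your proposal is correct and follows essentially the same route as the paper: sample a uniform random $Y'$ of size $C^2k$, fix one attaining the expected edge count $\ge C^2k^2$, and then bound from below the number of $X$-vertices with at least $Ck/4$ neighbours in $Y'$ by a degree count. Your accounting even yields the marginally stronger bound $k-|X|/(4C)$ before truncating, but the argument is the same as the paper's.
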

\begin{proof} 
Let $Y'\subseteq Y$ be a random subset of order $C^2k$, noting that $\mathbb E(e(X, Y'))=e(G)|Y'|/|Y|\geq k|Y'|$. Fix some set $Y'$ for which this happens. Let $X'=\{x\in X: |N(x)\cap Y'|\geq Ck/4\}$. Since $|X'|C^2k+ |X|Ck\geq |X'|C^2k+ (|X|-|X'|)Ck/4=|X'||Y'|+ (|X|-|X'|)Ck/4\geq e(X,Y')\geq k|Y'|=C^2k^2$, we get that $|X'|\geq k-|X|/C$.
\end{proof}

We now come to the first stability lemma that we'll use --- this one finds structure in $T$-free bipartite graphs that are sufficiently unbalanced.
\begin{lemma}\label{lem:stability_bipartite}
Let $1\gg \Delta^{-1}, \varepsilon, q\gg \gamma\gg  k^{-1}$.
Let $T$ be a $k$-edge tree with $\Delta(T)\leq \Delta$. 
Let $G$ be a bipartite graph with parts $X,Y$, $|Y|\geq 400\varepsilon^{-1/2}k$,   $\delta(G)\geq (1-\varepsilon)k/2$, and let $D$ be a $q$-cut-dense  bipartite graph with parts $X_D, Y_D$ having $X\cap V(G)\subseteq X_D$, $|X_D|\le 10k$. If $G\cup D$ has no copy of $T$, then there is a subgraph $H$ of $G$ satisfying:
\begin{itemize} 
\item $H$ is bipartite with parts $X',Y'$ such that $|X'|\leq (1+20\gamma)k/2$, $|Y'|\geq (1-8\varepsilon^{1/32})|Y|\geq \varepsilon^{-1/8}k/2$, $\delta(X')\geq (1-8\varepsilon^{1/32})|Y'|$, $\delta(Y')\geq (1-8\varepsilon^{1/32})k/2$.  
\end{itemize}
\end{lemma}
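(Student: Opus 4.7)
I'll argue by contradiction: assume no such $H$ exists and produce $T\subseteq G\cup D$. Setting $\alpha:=\varepsilon^{1/32}$, I work with the candidate sets
\[
X':=\{x\in X:d_G(x)\geq(1-7\alpha)|Y|\},\qquad Y':=\{y\in Y:|N_G(y)\cap X'|\geq(1-8\alpha)k/2\},
\]
and $H:=G[X',Y']$. The condition $\delta_H(Y')\geq(1-8\alpha)k/2$ is built into the definition of $Y'$. Once the two size bounds $|X'|\leq(1+20\gamma)k/2$ and $|Y\setminus Y'|\leq 8\alpha|Y|$ are established, a short computation (using $\alpha<1/64$ together with $|N_H(x)|\geq d_G(x)-|Y\setminus Y'|\geq(1-7\alpha)|Y|-8\alpha|Y|$ for each $x\in X'$) yields $\delta_H(X')\geq(1-8\alpha)|Y'|$. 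So it suffices to establish the two size bounds.

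\textbf{Bounding $|X'|$.} Assume $|X'|>(1+20\gamma)k/2$. Fact~\ref{fact:cut_dense_min_degree} applied to $D$ forces $|V(D)|\leq 12k/q$, and the hypothesis $|Y|\geq 400\varepsilon^{-1/2}k$ allows a uniformly random $\hat Y\subseteq Y\setminus V(D)$ with $|\hat Y|=M:=2k$ to be chosen. A standard Chernoff and union-bound over the $\leq 10k$ vertices of $X'$ yields (with positive probability) a choice of $\hat Y$ for which every $x\in X'$ has $|N_G(x)\cap\hat Y|\geq(1-8\alpha)M$. Fix such $\hat Y$ and put $H^*:=D\cup G[X_D,\hat Y]$; since $\hat Y$ is disjoint from $V(D)$ and $X\subseteq X_D$, $H^*$ is bipartite with parts $X_D$ and $Y_D\cup\hat Y$. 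Each $v\in\hat Y$ has $\geq(1-\varepsilon)k/2$ neighbours in $X\subseteq V(D)$, so Lemma~\ref{lem:cut_dense_add_vertices} shows $H^*$ is $q^*$-cut-dense with $q^*=\Omega(q^3)$. Applying Lemma~\ref{lem:regularity_tree_embedding_bipartite} with $\varepsilon_L:=\gamma^2/1600$ (so that $400\sqrt{\varepsilon_L}=10\gamma$) embeds $T$: the $X_D$-side of $H^*$ contains more than $k_1+10\gamma k$ vertices each of $H^*$-degree at least $(1-8\alpha)M\geq k_2+10\gamma k$, contradicting the hypothesis that $G\cup D$ is $T$-free.

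\textbf{Bounding $|Y\setminus Y'|$ --- the main obstacle.} Suppose $|Y^-|:=|Y\setminus Y'|>8\alpha|Y|$. Then each $y\in Y^-$ has $|N_G(y)\cap(X\setminus X')|>3\alpha k$, and, crucially, each $x\in X'$ has $|N_G(x)\cap Y^-|\geq\alpha|Y|\gg k$ (using $d_G(x)\geq(1-7\alpha)|Y|$ together with $|Y'|<(1-8\alpha)|Y|$). The natural plan mirrors the previous step, replacing the random $\hat Y$ by a carefully chosen $\hat Y^-\subseteq Y^-\setminus V(D)$, forming $H^{**}:=D\cup G[X_D,\hat Y^-]$, using Lemma~\ref{lem:cut_dense_add_vertices} to obtain cut-density, and then applying Lemma~\ref{lem:regularity_tree_embedding_bipartite}. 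The obstacle is that Lemma~\ref{lem:regularity_tree_embedding_bipartite} demands roughly $k/2+o(k)$ vertices of high $H^{**}$-degree on the $X_D$-side, while the only obvious high-$Y^-$-degree vertices come from $X'$ --- and we only have an \emph{upper} bound on $|X'|$. I therefore expect the technical work to lie in producing enough second-tier high-degree vertices from $X\setminus X'$: the total edge count $e(X\setminus X',Y^-)>3\alpha k|Y^-|$ is large, so one can apply Lemma~\ref{lem:find_good_graph} to $G[X,Y^-]$ to extract a subgraph with large minimum degree on the $X$-side, and then glue it to $D$ via Lemma~\ref{lem:cut_dense_add_vertices}. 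Balancing $|\hat Y^-|$ against the resulting cut-density, and arranging that the extracted and original $X'$ vertices together provide the required high-degree set, is where the main parameter juggling should occur.
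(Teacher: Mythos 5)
The proposal is incomplete at the step that carries essentially all of the content of the lemma, and I do not believe it can be completed along the lines you sketch. Your first step (bounding $|X'|$ by randomly sparsifying $Y$ to $\hat Y$, gluing $\hat Y$ onto $D$ with Lemma~\ref{lem:cut_dense_add_vertices}, and invoking Lemma~\ref{lem:regularity_tree_embedding_bipartite}) is sound and close in spirit to what the paper does. But the second step is not a proof --- you explicitly leave it open --- and with your definition $X'=\{x: d_G(x)\geq(1-7\alpha)|Y|\}$ it is as hard as the whole lemma. The hypotheses on $G$ alone permit $X'=\emptyset$: take $|X|=k$ and a quasirandom bipartite $G$ in which every $x\in X$ has degree about $|Y|/2$ and every $y\in Y$ has degree about $k/2$. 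Then $Y'=\emptyset$ as well, so the bound $|Y\setminus Y'|\leq 8\alpha|Y|$ must be extracted entirely from $T$-freeness, in a global way. The local route you propose cannot deliver this: from $|Y^-|>8\alpha|Y|$ you only get $e(X\setminus X',Y^-)\gtrsim\alpha k|Y^-|$, and Lemma~\ref{lem:find_good_graph} applied with $k'\approx 3\alpha k$ returns at most $k'=O(\alpha k)$ vertices of large degree --- an order of magnitude short of the $\geq k_1+400\sqrt{\varepsilon'}k\geq k/2\Delta$ high-degree vertices that Lemma~\ref{lem:regularity_tree_embedding_bipartite} requires on the $X$-side, and there is no second source of such vertices since $|X'|$ may be $0$.

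The paper resolves exactly this difficulty by choosing the core $X$-set differently: not by a degree threshold in $G$, but as the $X$-side of a \emph{maximal} family of vertex-disjoint ``good'' subgraphs, each having $\geq\varepsilon^{1/8}k/2$ vertices of $X$ with degree $\geq 2k$ into a $400\varepsilon^{-1/8}k$-sized piece of $Y$. Maximality together with Lemma~\ref{lem:find_good_graph} forces all but $\varepsilon^{1/8}k|Y|$ edges of $G$ to meet this set, which does double duty: (a) if the set had $\geq k_1+\gamma k$ vertices, Lemma~\ref{lem:regularity_tree_embedding_bipartite} applied to its union with $D$ would embed $T$ (its vertices already have degree $\geq 2k$ there, so no Chernoff step is needed); and (b) since each $y\in Y$ has $\geq(1-\varepsilon)k/2$ neighbours, the small number of edges avoiding the set immediately bounds the number of $y$ with many neighbours outside it, which is precisely the bound on $|Y\setminus Y'|$ you are missing. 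A final cleaning step (removing the few $X$-vertices with atypically small degree into $Y'$) then yields the stated $H$. If you want to salvage your write-up, you should replace your threshold definition of $X'$ with a maximal-good-subgraph construction of this kind; as it stands, the argument has a genuine gap.
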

\begin{proof}
Let $\Delta, \varepsilon,q\gg \gamma\gg   k^{-1}$.  Let the parts of $T$ have size $k_1, k_2$ with $k_1\le k_2$. 
 If $|D|\geq 2q^{-1} k$, then by Fact~\ref{fact:cut_dense_min_degree}, $\delta(D)\geq (0.9 q)(2q^{-1} k)\ge 1.8k$ and Fact~\ref{fact:greedy_tree_embedding} gives a copy of $T$.  So assume  $|D|< 2q^{-1} k$.  
 Say a subgraph $H$ of $G$ is good if $|V(H)\cap Y|= 400\varepsilon^{-1/8} k$, $|V(H)\cap X|\geq \varepsilon^{1/8} k/2$, and  $\delta_H(X\cap H)\geq 2k$. Let $H_1, \dots, H_s$ be  a maximal collection of vertex-disjoint good subgraphs. Let  $H=\bigcup H_i$. 
 Let $J$ be the subgraph on $V(G)$ consisting of edges disjoint from $V(H)$. 

We claim that $e(J)\leq \varepsilon^{1/8} k|Y|$. Indeed otherwise apply Lemma~\ref{lem:find_good_graph} to $J$ with $X=X, Y=Y, k'= \varepsilon^{1/8} k, C'=20\varepsilon^{-1/8}$ in order to get a subgraph with $|V(H)\cap Y|= (20\varepsilon^{-1/8})^2\varepsilon^{1/8}k=400\varepsilon^{-1/8}k$, $|V(H)\cap X|\geq \varepsilon^{1/8} k- |X|/(20\varepsilon^{-1/8})\overset{{|X|\leq 10k}}{\geq} \varepsilon^{1/8} k/2$, and  $\delta_H(X\cap H)\geq (20\varepsilon^{-1/8})\varepsilon^{1/8} k/4\geq 2k$. This is a good subgraph, contradicting maximality.
 
Note that since $|X|\leq 10k$  and $H_1, \dots, H_s$ are vertex disjoint, each having $\varepsilon^{1/8} k/2$ vertices in $X$, we have $s\le |X|/(\varepsilon^{1/8} k/2)\le 10k/(\varepsilon^{1/8} k/2)= 20\varepsilon^{-1/8}$ and so $H:=H_1\cup \dots \cup H_s$ has $|H\cup D|\leq |D|+(400\varepsilon^{-1/8} k)s \leq 2q^{-1}k+ (20\varepsilon^{-1/8})(400\varepsilon^{-1/8}k)=  (2q^{-1}+8000\varepsilon^{-1/4})k$. Also vertices in $H\setminus D\subseteq H\setminus X\subseteq Y$ have $\delta(G)\geq (1-\varepsilon)k/2\geq q|D|/2$ neighbours in $D$. 
Note $|D|\geq |X|\geq \delta(G)\geq k/3$.
By  Lemma~\ref{lem:cut_dense_add_vertices}, we get that $H\cup D$ is $\frac{q(q/2)|D|^2}{4|H\cup D|^2}\geq \frac{q(q/2)(k/3)^2}{4(2q^{-1}+8000\varepsilon^{-1/4})^2k^2}\geq \varepsilon q^4$-cut-dense. 

 If $|H\cap X|\geq k_1+\gamma k$, then Lemma~\ref{lem:regularity_tree_embedding_bipartite} (applied to $H\cup D$ with $\varepsilon'=(\gamma/200)^2$) gives a copy of $T$, so we can assume that $|H\cap X|< k_1+\gamma k \leq k/2+10\gamma k=(1+20\gamma)k/2< (1+\varepsilon)k/2$. 
Let $Y'=\{y\in Y:|N(y)\cap H\cap X|\geq (1-4\varepsilon^{1/16})k/2 \}$. 
We have   $(|Y|-|Y'|-20\cdot 400\varepsilon^{-1/8}\cdot \varepsilon^{-1/8}k)\varepsilon^{1/16} k\leq (|Y|-|Y'|-s400\varepsilon^{-1/8}k)\varepsilon^{1/16} k = (|Y|-|Y'|-|Y\cap H|)\varepsilon^{1/16} k\le |Y\setminus (H\cup Y')|\varepsilon^{1/16} k = \sum_{y\in Y\setminus (H\cup Y')}((1-2\varepsilon^{1/16})k/2-(1-4\varepsilon^{1/16})k/2) )\leq \sum_{y\in Y\setminus (H\cup Y')} (\delta(G)-(1-4\varepsilon^{1/16})k/2)\leq \sum_{y\in Y\setminus (H\cup Y')} (|N(y)|-|N(y)\cap X\cap H|) = \sum_{y\in Y\setminus (H\cup Y')} |N(y)\cap X\setminus H)|= e(X\setminus H, Y\setminus (H\cup Y'))\leq e(J)\leq \varepsilon^{1/8}k|Y|$, which rearranges to $|Y'|\geq |Y|-20\cdot 400\varepsilon^{-1/4}k-\varepsilon^{1/16}|Y|\geq (1-\frac{20\cdot 400\varepsilon^{-1/4}k}{400\varepsilon^{-1/2}k}-\varepsilon^{1/16})|Y|\geq (1-2\varepsilon^{1/16})|Y|$.
 In particular, $Y'\neq \emptyset$, which shows $k_1+10\gamma k\geq |H\cap X|\geq (1-4\varepsilon^{1/16})k/2$, and so $k_1, k_2\geq (1-4\varepsilon^{1/16})k/2-10\gamma k\geq (1-5\varepsilon^{1/16})k/2$.   Let $\hat X=\{x\in H\cap X: |N(x)\cap Y'|\leq (1-6\varepsilon^{1/32})|Y'|\}$. Since $(1+20\gamma)|Y'|k/2-|\hat X||Y'|6\varepsilon^{1/32} \geq |H\cap X||Y'|-6\varepsilon^{1/32}|\hat X||Y'|\overset{\hat X\subseteq H\cap X}{=}(|\hat X|+|H\cap X\setminus \hat X|)|Y'|-6\varepsilon^{1/32}|\hat X||Y'|= |\hat X|(1-6\varepsilon^{1/32})|Y'|+ |H\cap X\setminus \hat X||Y'|\geq e(H\cap X, Y')\geq |Y'|(1-5\varepsilon^{1/16})k/2$ we have $|\hat X|\leq \frac{(1+20\gamma)|Y'|k/2-|Y'|(1-5\varepsilon^{1/16})k/2}{6\varepsilon^{1/32}|Y'|}= \frac{(5\varepsilon^{1/16}+20\gamma)k} {2\cdot 6\varepsilon^{1/32}}\leq \varepsilon^{1/32}k$. 
 
Now the lemma holds with $H=H\setminus \hat X$,  $X':=H\cap X\setminus \hat X$ and $Y'$: indeed, we have  $|X'|\le |H\cap X|\le (1+20\gamma )k/2$, $|Y'|\geq (1-2\varepsilon^{1/16})|Y|\ge (1-8\varepsilon^{1/32})|Y|\geq \varepsilon^{-1/8}k/2$, $\delta(X')\geq (1-6\varepsilon^{1/32})|Y'|\ge (1-8\varepsilon^{1/32})|Y'|$, and $\delta(Y')\geq (1-4\varepsilon^{1/16})k/2-|\hat X|\geq (1-4\varepsilon^{1/16})k/2-\varepsilon^{1/32}k\geq (1-8\varepsilon^{1/32})k/2$.
\end{proof}

The second stability lemma that we'll use finds structure in $T$-free bipartite graphs that are dense.
\begin{lemma}\label{lem:stability_nonbiparite} 
Let $\Delta^{-1}\gg  q, \varepsilon \gg  k^{-1}$ and $n\leq \varepsilon^{-1/4}k$.
Let $T$ be a tree with $\Delta(T)\leq \Delta$. 
Let $G$ be a $n$-vertex graph which is $q$-cut-dense,  has no copy of $T$, and has $e(G)\geq (1-\varepsilon)kn/2$. Then there is a subgraph $H$ of $G$ with $e(H)\geq e(G)(1- 16\varepsilon^{1/8})$ satisfying:
\begin{itemize}
\item $\delta(H)\geq (1-16\varepsilon^{1/8})k$ and $|H|\leq (1+16\varepsilon^{1/8})k$. 
\end{itemize}
\end{lemma}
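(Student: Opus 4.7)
The plan is to apply Lemma~\ref{lem:regularity_tree_embedding_nonbipartite} to $G$ with parameters $\varepsilon_{\mathrm{reg}},\delta$ chosen so that $100\sqrt{\varepsilon_{\mathrm{reg}}}\le\varepsilon^{1/8}/2$ and $\delta n^2\le\varepsilon^{1/8}e(G)/4$ (exploiting $n\le\varepsilon^{-1/4}k$). This produces $H_0\subseteq G$ obtained by deleting at most $\delta n^2$ edges, falling into Case~(i) ($H_0$ bipartite and $q/2$-cut-dense) or Case~(ii) (sets $C_1,C_2\subseteq V(G)$ with $2|C_1|+|C_2|\le(1+\varepsilon^{1/8}/2)k$, every edge of $H_0$ touching $C_1$ or lying inside $C_2$). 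I would rule out Case~(i) by showing that the parts $A,B$ of $H_0$ (with $|A|\le|B|$) satisfy $|A|,|B|\ge(1-O(\varepsilon))k/2$ via AM--GM applied to $|A||B|\ge e(H_0)\ge(1-O(\varepsilon))kn/2$, and an averaging argument using max degree $\le|B|$ supplies enough high-degree vertices in $A$ to apply Lemma~\ref{lem:regularity_tree_embedding_bipartite}, yielding $T\subseteq H_0\subseteq G$, a contradiction to $T$-freeness.

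In Case~(ii), write $S:=V(G)\setminus(C_1\cup C_2)$. I would first argue $|C_1|\le\varepsilon^{1/8}k$: if not, cut-density of $G$ gives each $c\in C_1$ at least $0.9qn-|C_1|$ neighbours in $V\setminus C_1$ (which, by the parameter hierarchy, exceeds $k_2+O(\sqrt{\varepsilon_{\mathrm{reg}}})k$), and Lemma~\ref{lem:regularity_tree_embedding_bipartite} applied to the bipartite subgraph $(C_1,V\setminus C_1)$ produces $T\subseteq G$, contradiction. Given $|C_1|\le\varepsilon^{1/8}k$, combining $e(H_0)\le|C_1|n+\binom{|C_2|}{2}$ with $e(H_0)\ge(1-O(\varepsilon))kn/2$ and $|C_2|\le(1+\varepsilon^{1/8}/2)k$ forces $n\le(1+O(\varepsilon^{1/8}))k$, $|C_2|\ge(1-O(\varepsilon^{1/8}))k$, and consequently $|S|\le O(\varepsilon^{1/8})k$.

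I take $H:=G[C_1\cup C_2]$ and iteratively remove low $H$-degree vertices. Immediately $|H|\le(1+\varepsilon^{1/8}/2)k\le(1+16\varepsilon^{1/8})k$, and the initial edge loss is $e(G)-e(H)\le|S||C_1|+2\delta n^2\le O(\varepsilon^{1/4})k^2+\varepsilon^{1/8}e(G)/2\le\varepsilon^{1/8}e(G)$. For $\delta(H)\ge(1-16\varepsilon^{1/8})k$, I iteratively remove vertices with $d_H(v)<(1-15\varepsilon^{1/8})k$; by Markov on $e(C_2,S)\le\delta n^2$, all but $O(\varepsilon^{1/2})k$ vertices of $C_2$ satisfy $|N_G(v)\cap S|\le\varepsilon^{1/8}k$, so for these we get $d_H(v)\ge d_G(v)-\varepsilon^{1/8}k$. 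The total-deficiency bound $\sum_v\max(0,k-d_G(v))\le\varepsilon kn+\sum_v\max(0,d_G(v)-k)\le O(\varepsilon^{1/8})k^2$ (using the pinned bounds on $n$ and the identity $\sum d_G=2e(G)$) should then control the count of $G$-low-degree vertices.

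The main obstacle I anticipate is this last iterative removal: even with the deficiency bound, a naive argument still permits up to $\Theta(k)$ vertices with $d_G(v)<(1-14\varepsilon^{1/8})k$, whose removal from $H$ could cost $\Theta(k^2)=\Theta(e(G))$ edges in the worst case. Overcoming this requires a finer analysis exploiting that (a) the non-edges inside $G[C_2]$ are concentrated (so most low-$H$-degree removals are cheap, losing only $d_H(v)\ll\tau$ edges rather than the threshold $\tau$) and (b) the cut-density together with $T$-freeness constrains the degree distribution in $G[C_2]$ more tightly than generic averaging gives---specifically, ruling out the coexistence of many ``hub'' vertices with $d_G\gg k$ alongside many low-degree vertices via Fact~\ref{fact:greedy_tree_embedding}. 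Combining these should pin the final removal count at $O(\varepsilon^{1/8})k$ and the cumulative edge loss at $O(\varepsilon^{1/8})e(G)$.
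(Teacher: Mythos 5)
Your overall plan (apply Lemma~\ref{lem:regularity_tree_embedding_nonbipartite} and analyse the two outcomes) matches the paper's, but there are two genuine gaps. First, the step you use to bound $|C_1|$ does not work: to invoke Lemma~\ref{lem:regularity_tree_embedding_bipartite} on the pair $(C_1,V\setminus C_1)$ you need $C_1$ to contain at least $k_1+400\sqrt{\varepsilon'}k$ vertices of degree at least $k_2+400\sqrt{\varepsilon'}k$. The hypothesis $|C_1|>\varepsilon^{1/8}k$ gives nothing like $|C_1|\geq k_1+400\sqrt{\varepsilon'}k$ (here $k_1$ can be as large as $k/2$), and the cut-density degree bound $0.9qn$ need not exceed $k_2\geq k/2\Delta$ at all when $n=\Theta(k)$ and $q$ is a small constant ($q$ is not assumed large relative to $1$, only $q\gg k^{-1}$). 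The paper bounds $|C_1|$ by pure edge counting instead: $e(H)\leq(1+\gamma)kn/2-|C_1|^2/2-|C_2|(n-|C_2|)/2$, so $|C_1|\geq 2\sqrt{\varepsilon}n$ already contradicts $e(G)\geq(1-\varepsilon)kn/2$.

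Second, and more seriously, the endgame you describe in Case~(ii) --- taking $H=G[C_1\cup C_2]$ and iteratively deleting low-degree vertices --- is exactly the part you admit you cannot close, and it is not closable in the form stated: with $|C_2|$ only known to be $\geq(1-O(\varepsilon^{1/8}))k$, the edge deficit of $G[C_2]$ relative to a complete graph is only $O(\varepsilon^{1/8})k^2$, and the Markov-type cleanup then yields minimum degree $(1-O(\varepsilon^{1/16}))k$, which falls short of the claimed $(1-16\varepsilon^{1/8})k$. The paper avoids this entirely by observing that the same edge-count inequality forces $|C_2|>(1-2\sqrt{\varepsilon})n$ and hence $n\leq(1+4\sqrt{\varepsilon})k$, i.e.\ Case~(ii) simply cannot occur once $n>(1+4\sqrt{\varepsilon})k$; and the regime $n\leq(1+4\sqrt{\varepsilon})k$ is handled up front \emph{without} regularity, since there $G$ itself has $\geq\binom{n}{2}-8\sqrt{\varepsilon}n^2$ edges and a single (non-iterative) deletion of the $\leq 8\varepsilon^{1/4}n$ vertices of degree $\leq(1-8\varepsilon^{1/4})n$ produces $H$; the $O(\sqrt{\varepsilon})n^2$ deficit is what makes the exponent $1/8$ attainable. (Case~(i) is likewise refuted by an edge count using $e(H)\leq nk'/2-(k_1')^2$ and $k_1\geq k/2\Delta$, rather than producing a subgraph.) You would need to restructure your argument along these lines; the "finer analysis" you defer to is the actual content of the proof.
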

\begin{proof}
Pick $\Delta^{-1}\gg  q, \varepsilon \gg \gamma \gg n^{-1}, k^{-1}$.
Suppose that $n\leq (1+4\sqrt{\varepsilon})k$.  Then $e(G)\geq  (1-\varepsilon)kn/2\ge  (1-\varepsilon)\frac{n}{(1+4\sqrt{\varepsilon})}n/2\geq (1-6\sqrt{\varepsilon})n^2/2\geq \binom n2-8\sqrt{\varepsilon}n^2$, there are $\leq 8\varepsilon^{1/4}n$ vertices with degree $\leq n-1-4\varepsilon^{1/4}n\le (1-8\varepsilon^{1/4})n$. Hence there are $\leq 8\varepsilon^{1/4}n(n-1)= 16\varepsilon^{1/4}\binom n2\leq 16\varepsilon^{1/4}e(G)$ edges through such vertices. Delete all such vertices in order to get a graph $H$.  We have $|H|\le n\le  (1+4\sqrt{\varepsilon})k\le (1+16\varepsilon^{1/8})k$ and $\delta (H)\ge (1-8\varepsilon^{1/4})n-|G\setminus H|\geq (1-8\varepsilon^{1/4})n-8\varepsilon^{1/4}n=(1-16\varepsilon^{1/8})k$ --- and so $H$ satisfies the lemma. 
Thus we can assume $n> (1+4\sqrt{\varepsilon})k$. 
Apply Lemma~\ref{lem:regularity_tree_embedding_nonbipartite} with $\varepsilon'=(\gamma/100)^2$ to get a subgraph $H$ with $e(G)-e(H)\leq \varepsilon n^2\leq\varepsilon e(G)$. 

Suppose that $H$ satisfies (ii). Then $e(H)\leq |C_1|(n-|C_1|)+\binom {C_1}2+\binom{C_2}2\leq  |C_1|(n-|C_1|)+|C_1|^2/2+|C_2|^2/2= (|C_1|+|C_2|/2)n-|C_1|^2/2-|C_2|(n-|C_2|)/2\le (1+100\sqrt{\varepsilon'})\frac k2 n-|C_1|^2/2-|C_2|(n-|C_2|)/2 = (1+ \gamma)kn/2-|C_1|^2/2-|C_2|(n-|C_2|)/2$. If $2\sqrt{\varepsilon} n\leq |C_2|\leq  (1-2\sqrt{\varepsilon})n$ or $|C_1|\geq 2\sqrt{\varepsilon} n$ or $|C_1|+2|C_2|\leq k-2\sqrt{\varepsilon} n$, then this is $\leq kn/2-2\varepsilon n^2$, contradicting  $e(G)\geq (1-\varepsilon)kn/2$. 

Thus we have $|C_1|\leq 2\sqrt{\varepsilon} n$, $|C_1|+2|C_2|\geq k- 2\sqrt{\varepsilon} n$, which imply that $|C_2|\geq k/2-\sqrt{\varepsilon}n-\sqrt{\varepsilon}n>\varepsilon^{1/4}n-2\sqrt{\varepsilon}n\geq 2\sqrt{\varepsilon}n$. Since we don't have $2\sqrt{\varepsilon}n\leq |C_2|\leq  (1-2\sqrt{\varepsilon})n$, this gives $|C_2|> (1-2\sqrt{\varepsilon})n$. Since $|C_2|\leq (1+\gamma)k$ (from Lemma~\ref{lem:regularity_tree_embedding_nonbipartite}), we get $n\leq (1+4\sqrt{\varepsilon})k$, contradicting the first paragraph.

Suppose that $H$ satisfies (i). 
Let the parts of $T$ have sizes $k_1\leq k_2$. 
We have that $H$ is bipartite with parts $X,Y$ --- suppose these are labelled so that $|X|\leq |Y|$. Note $k_1\geq k/2\Delta$ by Fact~\ref{fact:large_bipart_in_bounded_degree_tree}. Set $k_1':=(1+\gamma)k_1, k_2':=(1+\gamma)k_2, k':=k'_1+k'_2=(1+\gamma)k$. 
Let $X_1=\{x\in X: d_H(x)\geq k_2'\}$ and $X_2=\{x\in X: d_H(x)< k_2'\}$.
 From Lemma~\ref{lem:regularity_tree_embedding_bipartite} applied with $\varepsilon'=(\gamma/200)^2$, we have $|X_1|\leq k_1'$, which gives $|X_1|(n-|X_1|-|X_2|)\leq k_1'(n-k_1'-|X_2|)$ (this inequality rearranges to $(n-|X_2|-|X_1|-k_1)(k_1-|X_1|)\geq 0$. This holds because $k_1'\geq |X_1|$, $n/2> (1+4\sqrt{\varepsilon})k/2\geq (1+4\sqrt{\varepsilon})k_1\geq k_1'$, and  $n/2\ge (|Y|+|X|)/2\geq |X|= |X_1|+|X_2|$). 
We have $e(H)\leq |X_1||Y|+|X_2|k_2'=|X_1|(n-|X_1|-|X_2|)+|X_2|k_2'\leq k_1'(n-|X_2|-k_1')+|X_2|(k'-k_1')=nk'/2-(n-2|X_2|)(k'/2-k_1')-(k_1')^2\leq nk'/2-(k_1')^2\leq nk'/2-(k/2\Delta)^2\leq (1+\gamma)nk/2-\varepsilon^{1/4} nk/4\Delta^2$. This contradicts $e(G)\geq (1-\varepsilon)nk/2$.
\end{proof}

\begin{lemma}\label{lem:stability_nonbiparite_no_cut_dense} 
Let $\Delta^{-1}\gg  \varepsilon \gg  k^{-1}$ and $n\leq \varepsilon^{-1/4}k$.
Let $T$ be a tree with $\Delta(T)\leq \Delta$. 
Let $G$ be a $n$-vertex graph, that  has no copy of $T$, and has $e(G)\geq (1-\varepsilon)kn/2$. Then there is a subgraph $H$ of $G$ satisfying:
\begin{itemize}
\item $\delta(H)\geq (1-32\varepsilon^{1/8})k$ and $|H|\leq (1+32\varepsilon^{1/8})k$.
\end{itemize}
\end{lemma}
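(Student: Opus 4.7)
The plan is to reduce to Lemma~\ref{lem:stability_nonbiparite} by first decomposing $G$ into cut-dense components, and then finding a single component that still contains essentially all the edge density of $G$.

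First I would fix a small auxiliary parameter $q$ (for concreteness $q := \varepsilon^2$) satisfying $\Delta^{-1} \gg q, \varepsilon \gg k^{-1}$. Apply Lemma~\ref{lem:cut_dense_decomposition} to $G$ with this value of $q$ to obtain a subgraph $G' \subseteq G$ with $e(G) - e(G') \leq q n^2$, all of whose connected components are $q$-cut-dense. Using $n \leq \varepsilon^{-1/4}k$, the deleted edges contribute at most $q n^2 \leq \varepsilon^2 \cdot \varepsilon^{-1/4} kn = \varepsilon^{7/4} kn$, so $e(G') \geq (1-\varepsilon)kn/2 - \varepsilon^{7/4} kn \geq (1 - 3\varepsilon)kn/2$.

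Next, by Fact~\ref{fact:connectected_component_with_same_density_as_G} there exists a connected component $C$ of $G'$ with $e(C)/|C| \geq e(G')/|G'| \geq e(G')/n \geq (1-3\varepsilon)k/2$, hence $e(C) \geq (1-3\varepsilon)k|C|/2$. This component $C$ inherits the following properties: (a) it is $q$-cut-dense since it is a component of $G'$; (b) it contains no copy of $T$, since $C \subseteq G$; and (c) $|C| \leq n \leq \varepsilon^{-1/4}k \leq (3\varepsilon)^{-1/4}k$.

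Finally, since $\Delta^{-1} \gg q, 3\varepsilon \gg k^{-1}$, I would apply Lemma~\ref{lem:stability_nonbiparite} to $C$ with $\varepsilon$ replaced by $3\varepsilon$ to get a subgraph $H \subseteq C \subseteq G$ with $|H| \leq (1 + 16(3\varepsilon)^{1/8}) k \leq (1 + 32\varepsilon^{1/8}) k$ and $\delta(H) \geq (1 - 16(3\varepsilon)^{1/8}) k \geq (1 - 32\varepsilon^{1/8}) k$, as required. There is no real obstacle here beyond choosing $q$ small enough that the decomposition loses few enough edges (so the dense component $C$ still has density close to $k/2$), yet large enough that $C$ is cut-dense with a parameter compatible with the hypothesis of Lemma~\ref{lem:stability_nonbiparite}; the choice $q = \varepsilon^2$ is comfortably in the acceptable range.
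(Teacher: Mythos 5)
Your proposal is correct and takes essentially the same route as the paper: delete $qn^2$ edges via Lemma~\ref{lem:cut_dense_decomposition} so that every component becomes $q$-cut-dense, select a component of at least average density via Fact~\ref{fact:connectected_component_with_same_density_as_G}, and apply Lemma~\ref{lem:stability_nonbiparite} with a slightly inflated error parameter (the paper uses $\varepsilon'=2\varepsilon$ and keeps $q$ in the hierarchy rather than fixing $q=\varepsilon^2$, but this is immaterial). Your step $\varepsilon^{-1/4}k\leq(3\varepsilon)^{-1/4}k$ is backwards as written, but the paper's own application with $\varepsilon'=2\varepsilon$ has the identical harmless slack in the size condition, so this is not a gap relative to the paper's argument.
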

\begin{proof}
Let $\Delta^{-1}\gg  \varepsilon \gg q\gg  k^{-1}$
Apply Lemma~\ref{lem:cut_dense_decomposition} to delete $q n^2$ edges in order to get a graph $H$ whose connected components are $q$-cut-dense. We have $e(H)\geq (1-\varepsilon)kn/2-qn^2\geq (1-\varepsilon)kn/2-q\varepsilon^{-1/4}nk\ge  (1-2\varepsilon)kn/2$. By Fact~\ref{fact:connectected_component_with_same_density_as_G}, one of the connected components of $H$ has $e(C_i)\geq (1-2\varepsilon)k|C_i|/2$.
Now Lemma~\ref{lem:stability_nonbiparite} applies to $C_i$ with $\varepsilon'=2\varepsilon$, $q=q$ to give  a subgraph $H$ with  $\delta(H)\geq (1-16(2\varepsilon)^{1/8})k\ge(1-32\varepsilon^{1/8})k$ and $|H|\leq (1+16(2\varepsilon)^{1/8})k\le (1+32\varepsilon^{1/8})k$.
\end{proof}

Our previous two stability lemmas are combined in the following one.
\begin{lemma}\label{lem:stability_main_proof}
Let $\Delta\gg \varepsilon\gg  k^{-1}$.
Let $T$ be a $k$-vertex tree with $\Delta(T)\leq \Delta$.
Let $G$ be a graph with $e(G)\geq (1-\varepsilon)kn/2$, and let $D$ be a cover   of order $|D|\leq 10k$. If $G$ has no copy of $T$, then there is a subgraph $H$ of $G$ 
satisfying one of:
\begin{enumerate}[(i)]
\item $\delta(H)\geq (1-32\varepsilon^{1/1024})k$ and $|H|\leq (1+32\varepsilon^{1/1024})k$.
\item  $H$ is bipartite with parts $X,Y$ such that $|X|\leq (1+32\varepsilon^{1/1024})k/2$, $|Y|\geq \varepsilon^{-1/1024}k\geq 6k$, $\delta(X)\geq (1-32\varepsilon^{1/1024})|Y|$, $\delta(Y)\geq (1-32\varepsilon^{1/1024})k/2$.
\end{enumerate}
\end{lemma}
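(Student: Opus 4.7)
The plan is to reduce to the two preceding stability sub-lemmas, Lemma~\ref{lem:stability_nonbiparite_no_cut_dense} (for outcome (i)) and Lemma~\ref{lem:stability_bipartite} (for outcome (ii)), via a case split on $n := |V(G)|$. First I would apply Fact~\ref{fact:subgraph_min_degree_e/2} with $x = (1-\varepsilon)k/2$ to pass to a subgraph in which every vertex has degree at least $(1-\varepsilon)k/2$; the restriction of $D$ to the new vertex set is still a cover of size at most $10k$, and $e(G) \ge (1-\varepsilon)k|V(G)|/2$ is preserved.

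In the small-$n$ case, $n \le N$ where $N = \varepsilon^{-c}k$ for an appropriate small constant $c<1/4$, I would apply Lemma~\ref{lem:stability_nonbiparite_no_cut_dense} directly with parameter $\varepsilon'$ chosen as a suitable fractional power of $\varepsilon$: small enough that the output constant $32(\varepsilon')^{1/8}$ does not exceed $32\varepsilon^{1/1024}$, but large enough that the precondition $n \le (\varepsilon')^{-1/4}k$ is satisfied. This delivers outcome (i).

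In the large-$n$ case, $n > N$, I exploit the cover: $e(G[D]) \le \binom{|D|}{2} \le 50k^2 \ll kn/2$, so almost all edges of $G$ lie in the bipartite graph $G_B := G[D, V\setminus D]$. I apply Lemma~\ref{lem:cut_dense_decomposition_of_dominated_graphs2} to $G_B$ with $X = D$ and $p$ a small power of $\varepsilon$, producing vertex-disjoint bipartite pieces $G_1, \ldots, G_t$ with cut-dense cores $D_i$ satisfying $V(G_i) \cap D \subseteq V(D_i)$. Writing $X_i := V(G_i) \cap D$ and $Y_i := V(G_i) \setminus D$, the bounds $\sum_i |X_i| \le 10k$ and $e(G_i) \le |X_i||Y_i|$ combined with $\sum_i e(G_i) \ge (1-o(1))kn/2$ give $\max_i |Y_i| \ge \sum_i e(G_i)/(10k) \ge (1-o(1))n/20$, comfortably exceeding $400(\varepsilon'')^{-1/2}k$ for $\varepsilon'' = \varepsilon^{1/32}$. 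Applying Lemma~\ref{lem:stability_bipartite} to $G_{i^*}$ with auxiliary cut-dense graph $D_{i^*}$ then yields outcome (ii), with output constants $8(\varepsilon'')^{1/32} = 8\varepsilon^{1/1024}$ fitting inside the required $32\varepsilon^{1/1024}$.

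The trickiest step will be to verify the precondition $\delta(G_{i^*}) \ge (1-\varepsilon'')k/2$ of Lemma~\ref{lem:stability_bipartite}, since Lemma~\ref{lem:cut_dense_decomposition_of_dominated_graphs2} only guarantees the much weaker $|N(y) \cap V(D_{i^*})| \ge p^{10^6 p^{-1}}k$. My plan here is to combine the global bound $\delta(G) \ge (1-\varepsilon)k/2$ with the edge-loss bound of Lemma~\ref{lem:cut_dense_decomposition_of_dominated_graphs2} to show that the average $Y_{i^*}$-degree in $G_{i^*}$ is at least $(1 - O(\varepsilon+p))k/2$; then, using the fact that each $y \in Y_{i^*}$ has degree bounded above by $|X_{i^*}| \le 10k$, a pigeonholing argument extracts a sufficiently large subset $Y^+ \subseteq Y_{i^*}$ on which $d(y) \ge (1-\varepsilon'')k/2$. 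An analogous restriction yields a subset $X^+ \subseteq X_{i^*}$ on which $\delta$ is high, and Lemma~\ref{lem:stability_bipartite} is then applied to $G[X^+, Y^+]$ with auxiliary $D_{i^*}$. Setting $N$, $p$, and $\varepsilon''$ as appropriate small fractional powers of $\varepsilon$ (so that, for instance, $N \approx \varepsilon^{-3/64}k$ and $\varepsilon'' = \varepsilon^{1/32}$) ensures that the two cases together cover all $n$ and that the exponent $1/1024$ in the final conclusion is attained.
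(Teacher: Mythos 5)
Your overall architecture matches the paper's: split on whether $n$ is small or large relative to $k$; in the small case invoke Lemma~\ref{lem:stability_nonbiparite_no_cut_dense}; in the large case discard the edges inside $D$, decompose via Lemma~\ref{lem:cut_dense_decomposition_of_dominated_graphs2}, pick a good piece $G_{i^*}$ with its cut-dense core $D_{i^*}$, and feed it to Lemma~\ref{lem:stability_bipartite}. The parameter bookkeeping in both of your cases can be made to work (in the small case one may even take $\varepsilon'=\varepsilon$, so the two regimes overlap). The gap is exactly where you flagged it: verifying $\delta\geq(1-\varepsilon'')k/2$ for the graph you hand to Lemma~\ref{lem:stability_bipartite}. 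First, the averaging step as stated is not valid: knowing the average $Y_{i^*}$-degree is $(1-o(1))k/2$ while individual degrees can be as large as $|X_{i^*}|\leq 10k$ tells you nothing about typical degrees (a $1/20$-fraction of $Y_{i^*}$ could carry all the edges). The $Y$-side can still be rescued by a different argument — every vertex had degree $\geq(1-\varepsilon)k/2$ before the $O((\varepsilon^{c}+p)kn)$ edge deletions, so only $o(n)$ vertices lose more than $\varepsilon''k/4$ edges, and this is negligible against $|Y_{i^*}|=\Omega(n)$. But the ``analogous restriction'' on the $X$-side genuinely fails: $|X_{i^*}|\leq 10k$ is tiny compared to $n$, so the deleted edges can be concentrated on $X$, destroying the degree of a constant fraction (for large $n$, potentially almost all) of the $X$-vertices. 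Removing those damaged $X$-vertices then strips neighbours from the $Y$-vertices, which forces you to re-restrict $Y$, then $X$ again, and so on; this cascade is precisely the iterative cleaning of Fact~\ref{fact:subgraph_min_degree_e/2}, and its output can end up with a $Y$-part far smaller than the $400(\varepsilon'')^{-1/2}k$ that Lemma~\ref{lem:stability_bipartite} requires.

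This is why the paper's proof is structured differently at this point: it selects a piece $G_i$ by \emph{density} (Fact~\ref{fact:connectected_component_with_same_density_as_G}), applies Fact~\ref{fact:subgraph_min_degree_e/2} to that piece to honestly obtain $\delta(G_i')\geq(1-\varepsilon^{1/10})k/2$ on both sides, and then accepts that $G_i'$ may be small — adding a third sub-case in which $|G_i'|\leq\varepsilon^{-1/40}k$ and Lemma~\ref{lem:stability_nonbiparite_no_cut_dense} is invoked a second time (with the weaker parameter $\varepsilon^{1/10}$) to land in outcome (i). Your two-case plan omits this fallback, and without it the large-$n$ branch cannot be completed as described. (A remark: the hypothesis $\delta(G)\geq(1-\varepsilon)k/2$ in Lemma~\ref{lem:stability_bipartite} is in fact only exploited for $Y$-vertices in its proof, so one could salvage your route by proving and using that refinement; but that requires reopening the proof of Lemma~\ref{lem:stability_bipartite}, which your proposal does not do.)
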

\begin{proof}
Let $\Delta\gg \varepsilon\gg p\gg \gamma\gg n^{-1}, k^{-1}$.
If $n\leq \varepsilon^{-1/8}k$, then Lemma~\ref{lem:stability_nonbiparite_no_cut_dense} applies to give structure (i), so we can suppose that $n< \varepsilon^{-1/8}k$.

Delete all $\leq 100k^2\leq 100\varepsilon^{1/8} kn$ edges inside $D$ to get a bipartite graph $G[D,Y]$. Apply Lemma~\ref{lem:cut_dense_decomposition_of_dominated_graphs2} to delete $200p kn\leq \varepsilon^{1/8} kn/2$ edges  in order to partition the remaining edges into  subgraphs $G_1, \dots, G_t$ such that each $G_i$ has a $p^{10^9p^{-8}}$-cut-dense subgraph $D_i$ with $|D_i|\leq  430p^{-8}k$ containing $X\cap G_i$. Note $e(\bigcup G_i)\geq e(G)-101\varepsilon^{1/8} kn\le (1-\varepsilon)kn/2-101\varepsilon^{1/8} kn\ge (1-400\varepsilon^{1/8})kn/2$. By Fact~\ref{fact:connectected_component_with_same_density_as_G}, some $G_i$ has $e(G_i)\geq (1-400\varepsilon^{1/8})k|G_i|/2$. 
Using Fact~\ref{fact:subgraph_min_degree_e/2}, we find a subgraph $G_i'\subseteq G_i$ with  $\delta(G_i')\geq (1-400\varepsilon^{1/8})k/2\ge  (1-\varepsilon^{1/10})k/2$ and $e(G_i')\geq (1-400\varepsilon^{1/8})k|G_i'|/2\ge  (1-\varepsilon^{1/10})k|G_i'|/2$.

If $|G_i'|\leq (\varepsilon^{1/10})^{-1/4}k$, then Lemma~\ref{lem:stability_nonbiparite_no_cut_dense} applies with $\varepsilon'=\varepsilon^{1/10}$ to give structure (i).
If $|G_i'|> (\varepsilon^{1/10})^{-1/4}k\ge 400(\varepsilon^{1/32})^{-1/2}$, then Lemma~\ref{lem:stability_bipartite} applies with  $\varepsilon'=\varepsilon^{1/32}$, $q=p^{10^9p^{-8}}$, $\gamma =\gamma$  to give structure (ii).\end{proof}

We can now deduce Theorem~\ref{thm:stability_main}.
\begin{proof}[Proof of Theorem~\ref{thm:stability_main}]
Apply Lemma~\ref{lem:stability_main_proof} to get a subgraph $H$ with structure (i) or (ii). In case (i), we have $\delta(H)\geq (1-32\varepsilon^{1/320})k\ge (1-\alpha)k$ and $|H|\leq (1+32\varepsilon^{1/320})k\le (1+\alpha) k$ so case (i) of Theorem~\ref{thm:stability_main} holds. In case (ii), $|X|\leq (1+32\varepsilon^{1/320})k/2\le (1+\alpha)k/2$, $|Y|\geq \varepsilon^{-1/320}k\geq 6k$, $\delta(X)\geq (1-32\varepsilon^{1/320})|Y|\ge (1-\alpha)|Y|$, $\delta(Y)\geq (1-32\varepsilon^{1/320})k/2\ge (1-\alpha)k/2$, so case (ii) of Theorem~\ref{thm:stability_main} holds.
\end{proof}

\section{Extremal case analysis}
In this section, we prove Theorems~\ref{thm:nonbipartite_extremal} and~\ref{thm:bipartite_extremal}. 
For both of our extremal case analyses, we'll need a standard fact about dividing a tree in two.

\begin{lemma}[Proposition 3.22, \cite{montgomery2019spanning}]\label{lem:divide_tree}
Let $T$ be a tree and $m\le |T|/3$. 
There are two edge-disjoint subtrees $S$ and $R$ with $E(S)\cup S(R)=E(T)$ such that $|S|\in [m, 3m]$.
\end{lemma}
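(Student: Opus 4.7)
The plan is to root $T$ at an arbitrary vertex and run a ``first time the subtree size crosses $m$'' argument. For a vertex $v$, write $|T_v|$ for the size of the subtree hanging off $v$, so $|T_r|=|T|\ge 3m$ for the root $r$. I would pick $v$ to minimise $|T_v|$ subject to $|T_v|\ge m$; such a $v$ exists because $|T_r|\ge m$, and by minimality every child $c$ of $v$ satisfies $|T_c|<m$.

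Then I split into two cases on $|T_v|$. If $|T_v|\le 3m$, set $S:=T_v$, so that $|S|\in[m,3m]$, and let $R$ be the edge-complement of $S$ in $T$; this $R$ is a subtree because deleting the edges strictly inside $T_v$ leaves the rest of $T$ connected through the vertex $v$, which is shared between $S$ and $R$. If instead $|T_v|>3m$, I use a greedy selection on the children of $v$. Writing $c_1,\dots,c_d$ for those children (each with $|T_{c_i}|<m$), one has $\sum_i|T_{c_i}|=|T_v|-1>3m-1$. I would add the subtrees $T_{c_i}$ one at a time to an initially empty collection; the running sum starts at $0$ and eventually exceeds $3m-1$, so, since each step increases it by strictly less than $m$, the running sum must at some point lie in $[m-1,2m-1]$. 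Taking $S$ to be $v$ together with the selected subtrees and the edges from $v$ to each of them then gives $|S|\in[m,2m]\subseteq[m,3m]$, and $R:=T\setminus E(S)$ is again a subtree, since the part of $T$ above $v$ and the unselected children's subtrees all meet at $v$.

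I don't expect a serious obstacle here: the whole argument is a standard threshold-crossing trick, and the only care needed is to check that in both cases $R$ is genuinely a subtree. This reduces to the observation that $v$ belongs to both $S$ and $R$, so the edge-partition $E(T)=E(S)\sqcup E(R)$ corresponds to two subtrees meeting precisely at $v$.
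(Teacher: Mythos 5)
Your argument is correct, and there is nothing in the paper to compare it against: the lemma is quoted from Montgomery's survey (Proposition 3.22 there) without proof. Your threshold-crossing argument -- take a lowest vertex $v$ with $|T_v|\ge m$, so all children's subtrees have fewer than $m$ vertices, and either take $T_v$ itself or greedily accumulate children's subtrees at $v$ until the count first enters the target window -- is precisely the standard proof of this fact, and your checks that $R$ is a subtree (both pieces meet exactly at $v$) are the only points needing care.
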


We'll need a variant of this where the tree is divided along an edge rather than a vertex.
\begin{lemma}\label{lem:split_tree_by_edge} 
Let $\alpha < 1/4\Delta$ and $k\geq 100$. 
Let $T$ be a $k$-edge tree with $\Delta(T)\leq \Delta$, and $\alpha \in [0,1]$. There is an edge $xy\in E(T)$, such that the component $T_y$ of $T-xy$ containing $y$ has $|T_y|\in [\alpha k, 3\Delta\alpha k]$.
\end{lemma}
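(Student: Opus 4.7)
The plan is to produce the required edge by a greedy root-to-leaf descent. Root $T$ at any vertex $r$ and, for each $v\in V(T)$, write $f(v):=|T_v|$ for the size of the subtree of descendants of $v$ (including $v$ itself). Then $f(r)=k+1$ while every leaf has $f=1$, and the hypothesis $\alpha<1/(4\Delta)$ gives $3\Delta\alpha k<3k/4<k+1=f(r)$, so $r$ lies strictly above the target band $[\alpha k,3\Delta\alpha k]$.

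Starting at $r$, I would move at each step from the current vertex $v$ to a child $w$ maximising $f(w)$, and halt at the first $y$ for which $f(y)\le 3\Delta\alpha k$. Since $f(r)>3\Delta\alpha k$ while leaves satisfy $f=1\le 3\Delta\alpha k$ (in the non-degenerate regime $\alpha k\ge 1/(3\Delta)$ where the target interval is non-empty), the descent terminates at some $y\ne r$; let $x$ be the parent of $y$ in the rooted tree. The upper bound $|T_y|=f(y)\le 3\Delta\alpha k$ is immediate from the halting rule. For the lower bound, observe that we did not halt at $x$, so $f(x)>3\Delta\alpha k$, while $x$ has at most $\Delta$ children in $T$ (as $\Delta(T)\le\Delta$) and each of them has $f$-value at most $f(y)$ by the greedy choice, hence
\[
f(x)=1+\sum_{w\text{ child of }x}f(w)\le 1+\Delta f(y).
\]
Rearranging yields $f(y)\ge(f(x)-1)/\Delta>3\alpha k-1/\Delta\ge\alpha k$, where the final inequality uses $k\ge 100$ to dominate the additive $1/\Delta$ by $2\alpha k$. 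Deleting $xy$ then cuts $T$ into two components; the one containing $y$ is exactly $T_y$ (since $x$ is the parent of $y$), giving $|T_y|=f(y)\in[\alpha k,3\Delta\alpha k]$.

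The only non-trivial point, and the main obstacle to watch out for, is avoiding an overshoot at the halting step: when $f(x)$ is only just above $3\Delta\alpha k$, the greedy child $y$ could a priori have $f(y)$ as small as about $(f(x)-1)/\Delta\approx 3\alpha k$. The factor $3$ in the upper endpoint $3\Delta\alpha k$ is exactly the slack chosen so that this worst-case drop still lands inside the interval $[\alpha k,3\Delta\alpha k]$; with any smaller multiplier the descent would not be guaranteed to stop in the band. An alternative route via Lemma~\ref{lem:divide_tree} (which cuts $T$ along a shared vertex rather than an edge and then selects the heaviest sub-branch at that vertex) runs into the same multiplicative loss, so the direct greedy descent seems the cleanest approach.
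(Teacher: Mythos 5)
Your greedy root-to-leaf descent is correct, but it is a genuinely different argument from the paper's. The paper first invokes Lemma~\ref{lem:divide_tree} with $m=\Delta\alpha k$ to split off a subtree $S$ with $|S|\in[\Delta\alpha k,3\Delta\alpha k]$ meeting the rest of $T$ in a single vertex $v$, and then takes the largest branch of $S$ hanging off $v$, which has size at least $|S|/\Delta\ge\alpha k$ and at most $|S|\le 3\Delta\alpha k$ --- exactly the ``alternative route'' you mention at the end, and it incurs the same factor-$\Delta$ loss you identify. Your version is self-contained (it does not need Lemma~\ref{lem:divide_tree} at all) and makes the source of the factor $3\Delta$ transparent: the worst-case drop $f(y)\ge (f(x)-1)/\Delta$ at the halting step. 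One small repair: your justification of the final inequality $3\alpha k-1/\Delta\ge\alpha k$, namely that ``$k\ge 100$ dominates the additive $1/\Delta$,'' does not actually follow from $k\ge100$ alone, since $\alpha$ may be as small as roughly $1/(3\Delta k)$ in the non-degenerate regime, in which case $2\alpha k\approx 2/(3\Delta)<1/\Delta$. The bound is nonetheless true: if $\alpha k\le 1$ then $|T_y|=f(y)\ge 1\ge\alpha k$ trivially by integrality, and if $\alpha k>1$ then $3\alpha k-1/\Delta>3\alpha k-1>2\alpha k>\alpha k$. With that one-line fix the proof is complete.
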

\begin{proof}
Use Lemma~\ref{lem:divide_tree} to get subtrees $S,R$ dividing $E(T)$ with $|S|\in [\alpha\Delta k, 3\Delta \alpha k]$. Note that there is precisely one vertex $v$ in both $S$ and $R$. Let $y_1, \dots, y_d$ be the neighbours of $v$ in $S$, and let $S_{y_i}$ be the subtree of $S$ descending from each $y_i$. We have $d\le \Delta-1$ and $|S|=1+|S_{y_1}|+\dots, +|S_{y_d}|$, which gives that $|S_{y_i}|\ge |S|/\Delta\ge \alpha k$ for some $y_i$. This $S_{y_i}$ together with the edge $vy_i$ satisfy the lemma.
\end{proof}

\subsection{Non-bipartite}
The following is our analysis of near-extremal graphs that are non-bipartite. We restate Theorem~\ref{thm:nonbipartite_extremal} and prove it.
\begin{theorem*}\label{lem:extremal_analysis_nonbipartite_proof}
Let $\Delta^{-7}/1000\geq \varepsilon\ge 3d^{-1}$.
Let $G$ be a connected graph with $|G|\geq (1+256\Delta^2\sqrt{\varepsilon})d$, $\delta(G)\geq 128\Delta^2\sqrt{\varepsilon} d$ containing a subgraph $K$ with $|K|\leq (1+\varepsilon)d$, $\delta(K)\geq (1-\varepsilon)d$. Then $G$ contains a copy of every $d$-edge tree having $\Delta(T)\leq \Delta$.
\end{theorem*}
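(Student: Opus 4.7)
The plan is to use Lemma~\ref{lem:split_tree_by_edge} to split $T$ at an edge $xy$ into a large subtree $T_x$ containing $x$ and a small subtree $T_y$ containing $y$, with $|T_y|\in[\alpha d,3\Delta\alpha d]$ for a parameter $\alpha\sim\sqrt{\varepsilon}$. The embedding will place $T_x$ inside the near-complete subgraph $K$ (via Fact~\ref{fact:greedy_tree_embedding}) and $T_y$ in a region of $G$ reached through a bridge edge $ab$ from $V(K)$ to $V(G)\setminus V(K)$, with $f(x)=a$, $f(y)=b$.

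First I would clean $K$: the set $B\subseteq V(K)$ of vertices with $d_K(v)<|K|-\sqrt{\varepsilon}d$ has size $O(\sqrt{\varepsilon}d)$ by a handshake-lemma count of the at most $|K|\varepsilon d$ non-edges of $K$, so $K':=K\setminus B$ satisfies $\delta(K')\geq |K'|-O(\sqrt{\varepsilon}d)$. The parameter $\alpha$ is chosen so that $|E(T_x)|=d-|T_y|\leq\delta(K'-a)$ (giving the lower bound $\alpha\gtrsim\sqrt{\varepsilon}$) and $|T_y|$ is small enough to be embedded via Fact~\ref{fact:greedy_tree_embedding} in a subgraph with minimum degree $\sim\delta(G)=128\Delta^2\sqrt{\varepsilon}d$ (giving the upper bound $\alpha\lesssim\Delta\sqrt{\varepsilon}$); for instance $\alpha=20\sqrt{\varepsilon}$ satisfies both constraints, and it also fits the hypotheses of Lemma~\ref{lem:split_tree_by_edge}. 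Since $G$ is connected and $V(G)\setminus V(K')\neq\varnothing$, there exists a bridge edge $ab$ with $a\in V(K')$ and $b\in V(G)\setminus V(K')$.

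Next I would split into two cases according to the connectivity profile of $V(G)\setminus V(K)$. \textbf{Case (A):} every vertex $v\in V(G)\setminus V(K)$ has at least $64\Delta^2\sqrt{\varepsilon}d$ neighbours in $V(G)\setminus V(K)$. Then $G_0:=G[V(G)\setminus V(K)]$ has $\delta(G_0)\geq |T_y|-1$, so Fact~\ref{fact:greedy_tree_embedding} embeds $T_y$ rooted at $b$ into $G_0$, and simultaneously Fact~\ref{fact:greedy_tree_embedding} embeds $T_x$ rooted at $a$ into $K'$; the two images are disjoint by construction. \textbf{Case (B):} some $b\in V(G)\setminus V(K)$ has at least $64\Delta^2\sqrt{\varepsilon}d$ neighbours inside $V(K)$. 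Then (choosing $a\in N_K(b)\cap V(K')$, which is nonempty) I would embed $T_y$ rooted at $b$ into $(K\setminus\{a\})\cup\{b\}$ via Fact~\ref{fact:greedy_tree_embedding} (applicable because $d_K(b)-1\geq\Delta$ and $\delta(K-a)\geq(1-\varepsilon)d-1\geq|T_y|-1$), and then embed $T_x$ rooted at $a$ into $K\setminus f(V(T_y)\setminus\{y\})$ by another invocation of Fact~\ref{fact:greedy_tree_embedding}.

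The hard part is coordinating the two embeddings in Case (B) when $|K|$ is close to its lower bound $(1-\varepsilon)d+1$: the remaining room in $K$ after reserving $f(V(T_y)\setminus\{y\})$ has only $|K|-|T_y|+1$ vertices, whereas $T_x$ needs $|T_x|=d-|T_y|+1$, so the counting requires $|K|\geq d+1$, which can fail by as much as $\varepsilon d$. To repair this, I would divert a short prefix of $T_y$'s embedding into $V(G)\setminus V(K)$, using the extra $\geq 256\Delta^2\sqrt{\varepsilon}d$ vertices guaranteed by $|G|\geq(1+256\Delta^2\sqrt{\varepsilon})d$ together with $\delta(G)\geq 128\Delta^2\sqrt{\varepsilon}d$, which (via a secondary Fact~\ref{fact:greedy_tree_embedding} application) lets a small initial segment of $T_y$ grow greedily into a subgraph of $V(G)\setminus V(K)$ rooted at a neighbour of $b$ there; this frees enough space inside $K$ for $T_x$ and completes the embedding.
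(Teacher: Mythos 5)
Your overall architecture (split off a small subtree at an edge, put the big piece in $K$, route the small piece through a bridge) matches the paper's, and your Case (A) is essentially the paper's second case and is sound (apart from a small slip: your bridge edge only guarantees $b\in V(G)\setminus V(K')$, so $b$ could lie in $B\subseteq V(K)$; you should take the bridge from $V(K)$ to $V(G)\setminus V(K)$ and root $T_x$ in $K$ directly --- the cleaning to $K'$ is unnecessary since $\delta(K)\geq(1-\varepsilon)d\geq d-|T_y|$ already suffices for Fact~\ref{fact:greedy_tree_embedding}). The genuine gap is in Case (B). Your hypothesis there is only that \emph{one} vertex $b\notin V(K)$ has many neighbours in $K$; this gives no minimum-degree control on $G[V(G)\setminus V(K)]$, and $b$ itself may have no neighbours outside $K$ at all. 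So the proposed repair --- ``grow a small initial segment of $T_y$ greedily into a subgraph of $V(G)\setminus V(K)$ rooted at a neighbour of $b$ there'' --- has neither a root nor a host graph in which to grow. Moreover, even if such a segment existed, the $\geq\varepsilon d$ diverted vertices have tree-neighbours that must be embedded back inside $K$, so every diverted vertex's image must have many neighbours in $K$; nothing in your setup arranges this, and this adjacency requirement is the actual crux, not just the vertex count.

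The missing idea is to make the case distinction on the \emph{size} of $A:=\{v\notin K:|N(v)\cap K|\geq 64\Delta^2\sqrt{\varepsilon}d\}$ rather than on a single vertex. If $|A|$ is small, then $G\setminus(V(K)\cup A)$ is nonempty with large minimum degree and the whole small subtree goes outside $K$ (your Case (A), essentially). If $|A|$ is large, the bipartite graph $G[A',K]$ (for $A'\subseteq A$ of size $64\Delta^2\sqrt{\varepsilon}d$) is dense enough that Fact~\ref{fact:subgraph_min_degree_e/2} yields a bipartite subgraph $H$ of minimum degree $\gg\Delta^2\varepsilon d$, and embedding the small subtree $S$ together with the connecting edge into $H$ (with the $R$-endpoint landing in $K$) forces an \emph{entire bipartition class} of $S$, of size $\geq|S|/2\Delta\geq 2\varepsilon d$ by Fact~\ref{fact:large_bipart_in_bounded_degree_tree}, to land in $A'$; this is what simultaneously frees $>\varepsilon d$ vertices of $K$ and guarantees that every diverted vertex's tree-neighbours can be embedded in $K$. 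With this mechanism the natural scale for the small subtree is $|S|=\Theta(\Delta^2\varepsilon d)$ rather than your $\Theta(\Delta\sqrt{\varepsilon}d)$, since all of $S$ must fit greedily inside a graph of minimum degree $\Theta(\Delta^2\varepsilon d)$.
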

\begin{proof}[Proof of Theorem~\ref{thm:nonbipartite_extremal}]
Use Lemma~\ref{lem:split_tree_by_edge} to divide the tree $T$ into subtrees $S$ and $R$ having $|S|\in [4\Delta\varepsilon d, 12\Delta^2\varepsilon d]$ and an edge $st$ with $s\in S, r\in R$. Let the parts of $S$ have sizes $s_1, s_2$ with $s$ contained in the one of size $s_1$. Using Fact~\ref{fact:large_bipart_in_bounded_degree_tree}, note that $s_1,s_2\geq |S|/2\Delta\geq 2\varepsilon d$.
Let $A=\{v\not\in K: |N(v)\cap K|\geq 64\Delta^2{\sqrt{\varepsilon}} d\}$. There are two cases based on how big $A$ is.

If $|A|\geq 64\Delta^2 \sqrt{\varepsilon} d$, then fix a subset  $A'\subseteq A$ with $|A'|= 64\Delta^2 \sqrt{\varepsilon} d$. Note that $e(G[A',K])\ge (64\Delta^2\sqrt{\varepsilon} d)^2\ge (32\Delta^2 \varepsilon)(2d)^2\ge (32\Delta^2 \varepsilon)((1+\varepsilon )d+ 64\Delta^2 \sqrt{\varepsilon} d)^2\ge (32\Delta^2 \varepsilon d)|G[A',K]|$. Fact~\ref{fact:subgraph_min_degree_e/2} gives a subgraph $H\subseteq G[A', K]$ with $\delta(H)\geq 13\Delta^2\varepsilon d$. Get an embedding $f$ of $S\cup\{sr\}$ into $H$ using Fact~\ref{fact:greedy_tree_embedding} with $r$ embedded in $K$. Note that $\delta(K\setminus (im_f\setminus \{f(r)\}))\geq \delta(K)-|f(S)\cap K|\geq (1-\varepsilon)d-|f(S)\cap K|= (1-\varepsilon)d-s_2=|T|-s_2-\varepsilon d-1=(|R|+s_1+s_2)-s_2-\varepsilon d-1=|R|+s_1-1-\varepsilon d>|R|$. 
Now embed $R$ disjointly from the copy of $S$ and rooted at $f(r)$ using Fact~\ref{fact:greedy_tree_embedding}.

If $|A|\leq 64\Delta^2\sqrt{\varepsilon} d$, then  $H:=G\setminus (V(K)\cup A)$ has $|H|\geq |G|-|K|-|A|\geq (1+256\Delta^2\sqrt{\varepsilon})d-(1+\varepsilon)d-64\Delta^2\sqrt{\varepsilon} d>0$ and so $H$  is non-empty. Also $H$ has minimum degree $\delta(G)- 64\Delta^2\sqrt{\varepsilon} d\ge 128\Delta^2\sqrt{\varepsilon} d -64\Delta^2\sqrt{\varepsilon} d\geq 64\Delta^2\sqrt{\varepsilon} d>|S|$. By connectedness, there is an edge $r's'$ from $r'\in K\cup A$ to $s'\in H$. Use Fact~\ref{fact:greedy_tree_embedding} to embed $S$ to $H$ mapping $s$ to $s'$.  Use Fact~\ref{fact:greedy_tree_embedding} to  embed $R$ to $K\cup \{r'\}$  mapping $r$ to $r'$. Combining the two embeddings gives an embedding of $T$ into $G$.
\end{proof}

\subsection{Bipartite}

 To carry out our bipartite extremal case analysis we'll need the following auxiliary lemma about embedding trees. For a set of vertices $S$ in a tree $T$, we'll use $N^2_T(S)$ to denote the set of vertices in $T$ at distance $2$ from everything in $S$ i.e. $N^2(S)=N(N(S))\setminus (S\cup N(S))$.
\begin{lemma}\label{lem:greedy_embedding_bipartite} 
Let $\Delta^{-7}/1000\geq \varepsilon\ge k^{-1}$.
Let $G$ be a graph containing a bipartite graph $K$ with parts $X_K, Y_K$   such that $|X_K|\leq (1+3\varepsilon)k/2$, $\delta_{Y_K}(X_K)\geq 90\Delta^3\varepsilon k$, $\delta_{X_K}(Y_K)\geq (1-160\Delta^3\varepsilon) k/2$, and there are subsets $X_K^{big}\subseteq X_K, Y_K^{big}\subseteq Y_K$ with $|X_K\setminus X_K^{big}|\le \varepsilon k$, $|Y_K^{big}|\geq 4k$ such that $\delta_{Y_K^{big}}(X_K^{big})\ge  (1-20\Delta\varepsilon)|Y_K^{big}|$. Let $T$ be a $k$-edge tree with parts $X_T, Y_T$ having   $\Delta(T)\leq \Delta$. Suppose we have a subset $S\subseteq V(T)$ and an embedding $f:T[S\cup N_T(S)]\to G$ satisfying:
\begin{enumerate}
\item $|S|\leq 32\varepsilon\Delta^2 k$.
\item  $N_T(S)\subseteq Y_T$. 
\item $f(N_T(S))\subseteq Y_K$.
\item $|X_K\setminus f(S))|\geq |X_T\setminus S|$.
\end{enumerate}
\end{lemma}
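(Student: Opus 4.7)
Since the excerpt cuts off mid-statement, I read the intended conclusion to be that the partial embedding $f$ extends to a full embedding of $T$ into $G$. The plan is to process the unembedded vertices $V(T)\setminus(S\cup N_T(S))$ in a BFS order rooted inside $S$, greedily mapping $X_T$-vertices into $X_K$ and $Y_T$-vertices into $Y_K^{big}$, with a matching argument to patch up tightness on the $X_K$-side.

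Setup. Condition~(2) forces $S\subseteq X_T$: every (non-isolated) vertex of $S$ has all its $T$-neighbours in $Y_T$, which is incompatible with being itself in $Y_T$. Also condition~(4) makes sense precisely when $f(S)\subseteq X_K$, which will be enforced throughout. Root $T$ at a vertex of $S$; then each component of $T\setminus(S\cup N_T(S))$ hangs off a single vertex in $N_T(S)\subseteq Y_T$, and a BFS traversal places every new vertex immediately after its $T$-parent, which will have been embedded.

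Greedy embedding of $Y_T$. For $v\in Y_T\setminus N_T(S)$, the parent $p(v)\in X_T$ is already embedded into $X_K$. I target an unused $v'\in Y_K^{big}$ adjacent to $f(p(v))$. If $f(p(v))\in X_K^{big}$ then $\delta_{Y_K^{big}}(X_K^{big})\ge (1-20\Delta\varepsilon)|Y_K^{big}|$ combined with $|Y_K^{big}|\ge 4k\ge 4|Y_T|$ leaves abundant candidates. If $f(p(v))\in X_K\setminus X_K^{big}$, I fall back on $\delta_{Y_K}(X_K)\ge 90\Delta^3\varepsilon k$; the crucial observation is that $|X_K\setminus X_K^{big}|\le \varepsilon k$, so at most $\Delta\varepsilon k$ children of $T$ ever have a parent with image in $X_K\setminus X_K^{big}$, and this is much smaller than $90\Delta^3\varepsilon k$, leaving space even in the $\le 90\Delta^3\varepsilon k$-sized neighbourhood.

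Greedy embedding of $X_T$, and the main obstacle. For $v\in X_T\setminus S$ the parent image $f(p(v))\in Y_K$ has $\ge(1-160\Delta^3\varepsilon)k/2$ neighbours in $X_K$, missing only $O(\Delta^3\varepsilon k)$ of $X_K$. The difficulty is that condition~(4) is only $|X_K\setminus f(S)|\ge|X_T\setminus S|$, which may be tight, so towards the end of the process a single-step greedy choice could fail when the last free slot in $X_K$ happens to be a non-neighbour of the current parent. This tightness is the main obstacle. I would overcome it by combining two ideas: (a) whenever possible, place each $X_T$-vertex into $X_K^{big}$, using the bigger reservoir and saving the $\le\varepsilon k$ ``bad'' slots of $X_K\setminus X_K^{big}$ for $X_T$-leaves, which impose no further children-constraints; and (b) after embedding all of $Y_T$, apply Hall's theorem to the auxiliary bipartite graph on $(X_T\setminus S,\,X_K\setminus f(S))$ where $v\sim v'$ iff $v'$ is $G$-adjacent to $f(y)$ for every $y\in N_T(v)$. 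Each $v$ has at most $\Delta$ constraints, each of which removes $\le O(\Delta^3\varepsilon k)$ of $X_K$, so the plausibility set of each $v$ has size at least $|X_K\setminus f(S)|-O(\Delta^4\varepsilon k)$. For a set $U$ violating Hall, every vertex in $(X_K\setminus f(S))\setminus N_H(U)$ would need to miss at least one embedded $Y_T$-neighbour of every $v\in U$; a double-counting argument using $\Delta(T)\le\Delta$ and the high $Y_K\!\to\!X_K$ density shows this is impossible under $\varepsilon\le\Delta^{-7}/1000$. The bound $|S|\le 32\varepsilon\Delta^2k$ from condition~(1) ensures $f(S)$ does not eat into too much of $X_K^{big}$, keeping the slack positive throughout.
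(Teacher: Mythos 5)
You have correctly identified the two real difficulties (the tightness of condition~(4) and the special role of $X_K\setminus X_K^{big}$), but your plan for resolving them has a genuine gap. First, the order of operations is circular: your greedy phase embeds each $Y_T$-vertex adjacent to the image of its already-embedded $X_T$-parent, yet your repair step~(b) wants to (re)assign all of $X_T\setminus S$ by Hall's theorem \emph{after} $Y_T$ is embedded. You cannot have both. Second, and more seriously, the Hall argument itself can fail: a vertex $x\in X_K\setminus X_K^{big}$ is only guaranteed $90\Delta^3\varepsilon k$ neighbours in $Y_K$, and $Y_K$ may be far larger than $k$, so $x$ may be adjacent to \emph{none} of the $\sim k/2$ embedded images of $Y_T$. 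Such an $x$ is then isolated on the right side of your auxiliary graph, and since condition~(4) may hold with equality, every vertex of $X_K\setminus f(S)$ must be used --- so Hall's condition fails and no double-counting rescues it. The same adjacency direction problem defeats idea~(a): an $X_T$-leaf placed on a vertex of $X_K\setminus X_K^{big}$ must still be adjacent to the already-fixed image of its $Y_T$-parent, which that vertex need not be. Finally, your last step embeds $Y_T$-vertices into common neighbourhoods in $Y_K^{big}$, but if one of a $Y_T$-vertex's $X_T$-neighbours is imaged in $X_K\setminus X_K^{big}$ the intersection may be empty, since for those vertices only the weak bound $\delta_{Y_K}(X_K)\ge 90\Delta^3\varepsilon k$ is available.

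The paper's proof dissolves all three problems by reversing the order in the right places. It first chooses a set $R\subseteq X_T\setminus S$ of $|X_K^{small}\setminus im_f|$ tree vertices pairwise at distance $\ge 4$ (and far from $S$) and maps $R$ bijectively onto $X_K^{small}$ \emph{before} anything adjacent to $R$ is embedded; it then embeds $N_T(R)$ into $Y_K$ choosing images inside the (small but sufficient) neighbourhoods of the already-fixed small vertices, and embeds $N_T^2(R)\cup N_T^2(S)$ into $X_K^{big}$. Crucially, it then maps \emph{all remaining} vertices of $X_T$ into $X_K\setminus im_f$ with no adjacency requirement at all --- legitimate because their $Y_T$-neighbours are not yet embedded --- so the tightness of~(4) costs nothing. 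Only at the very end are the remaining $Y_T$-vertices embedded, and by then every one of their $T$-neighbours sits in $X_K^{big}$, so the common-neighbourhood count inside $Y_K^{big}$ (of size $\ge 4k$) goes through. You would need to restructure your argument along these lines; as written, the Hall step does not close.
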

\begin{proof}
Let $X_{K}^{small}= X_K\setminus X_K^{big}$, noting that we have $|X_K^{small}|\leq \varepsilon k$. 
Pick a set $R\subseteq X_T\setminus S$ with $|R|=|X_K^{small}\setminus im_f|$ such that vertices in $R$ are at distance $\geq 4$ from each other and from $S$ (pick vertices in $R$ one by one. Since $\Delta(T)\leq \Delta$, at any point there are $\le |R\cup S|\Delta^4\le (\varepsilon k+32\varepsilon\Delta^2k)\Delta^4\le 33\Delta^6 \varepsilon k$ vertices we can't pick. But using Fact~\ref{fact:large_bipart_in_bounded_degree_tree}, $|X_T\setminus S|\geq k/2\Delta-32\varepsilon\Delta^2k>33\Delta^6 \varepsilon k$).
Extend $f$ to map $R$ to $X_K^{small}\setminus im_f$ bijectively. Note that this gives an embedding since there are no edges from $R$ to $R\cup S\cup N_T(S)$. 
Note that $|im_f|=|R\cup S\cup N_T(S)|\le \varepsilon k+ 32\varepsilon\Delta^2k+\Delta ( 32\varepsilon\Delta^2k)\le  80\varepsilon\Delta^3k$.

Note that for all $r\in R$ and $y\in N_T(r)$, there are $\geq d_K(f(r))-|im_f|\geq \delta_{Y_K}(X_K)-|im_f|\ge 90\Delta^3\varepsilon k-80\varepsilon\Delta^3k> \Delta \varepsilon k\ge \Delta|R|\geq |N_T(R)|$ places where $y$ can be embedded in $Y_K$. By picking $f(y)$ for all such $y, r$ one by one, we can extend $f$ to an embedding of $R\cup N_T(R)\cup S\cup N_T(S)$ having $f(N_T(R))\subseteq Y_K$. 
Note that $|im_f|=|R\cup N_T(R)\cup S\cup N_T(S)|\le \varepsilon k+\Delta \varepsilon k+ 32\varepsilon\Delta^2k+\Delta ( 32\varepsilon\Delta^2k)\le  80\varepsilon\Delta^3k$.

Next, for all edges $yx\in E(T)$ with $y\in N_T(R)\cup N_T(S)$, and $x\in N^2_T(R)\cup N^2_T(S)$, note that there are $\geq d_{X_K}(f(y))-|im_f|\ge  \delta_{X_K}(Y_K)-|im_f|\geq (1-160\Delta^3\varepsilon) k/2- 80\varepsilon\Delta^3k>(1-200\Delta^4\varepsilon)k/2\ge\Delta^2(\varepsilon k+32\varepsilon\Delta^2 k)\ge \Delta^2|R|+ \Delta^2|S|\ge |N_T^2(R)\cup N_T^2(S))|$ places in $X_K$ where $x$ can be embedded. By picking $f(x)$ for all such $x, y$ one by one, we can extend $f$ to an embedding of $R\cup N_T(R)\cup N_T^2(R)\cup S\cup N_T(S)\cup N_T^2(S)$ having $f\big(N_T^2(R)\cup N_T^2(S)\big)\subseteq X_K\setminus X_K^{small}=X_K^{big}$ (since  $X_K^{small}\subseteq  f(R\cup S\cup N(S))$).  

Next, extend $f$ to map all vertices of $X_T\setminus R\cup N_T(R)\cup N_T^2(R)\cup S\cup N_T(S)\cup N_T^2(S)$ into $X_K\setminus im_f$.
This is possible to do injectively since $|X_T\setminus R\cup N_T(R)\cup N_T^2(R)\cup S\cup N_T(S)\cup N_T^2(S)|=|X_T\setminus S|-|R|-|N_T^2(R)|-|N_T^2(S)|$, $|X_K\setminus im_f|=|X_K\setminus f(S)|-|f(R)|-|f(N_T^2(R))|-|f(N_T^2(S))|$,  and  $|X_K\setminus f(S)|\geq |X_T\setminus S|$. Note that $f$ is an embedding since $T$ has no edges between $X_T\setminus R\cup N_T(R)\cup N_T^2(R)\cup S\cup N_T(S)\cup N_T^2(S)$ and $R\cup N_T(R)\cup N_T^2(R)\cup S\cup N_T(S)\cup N_T^2(S)$. 
 Note also that $|im_f\cap Y_K|\le 80\varepsilon\Delta^3k$ since no new vertices have been embedded in $Y_K$ in the last two paragraphs.

Now all unembedded vertices are in $Y_T$, and their neighbours are embedded to $X_K^{big}$. For such a vertex $y$, the number of places in $Y_K^{big}$ where $y$ can be embedded is at least:
\begin{align*}
\left|\bigcap_{x\in N_T(y)} N_{Y_K^{big}}(f(x))\setminus (im_f\cap Y_K^{big})\right|&\geq \left|\bigcap_{x\in N_T(y)} N_{Y_K^{big}}(f(x))\right|-|im_f\cap Y_K^{big}|\\
&\geq |Y_K^{big}|-\sum_{x\in N_T(y)}|Y_K^{big}\setminus N_{Y_K^{big}}(f(x))|-80\varepsilon\Delta^3k\\
&=|Y_K^{big}|-\sum_{x\in N_T(y)}(|Y_K^{big}|-|N_{Y_K^{big}}(f(x))|)-80\varepsilon\Delta^3k\\
&\ge   |Y_K^{big}|-\sum_{x\in N_T(y)}(|Y_K^{big}|-\delta_{Y_K^{big}}(X_K^{big}))-80\varepsilon\Delta^3k\\
&\geq|Y_K^{big}|-\sum_{x\in N_T(y)}(|Y_K^{big}|-(1-20\Delta\varepsilon)|Y_K|)-80\varepsilon\Delta^3k\\
&=    (1 -|N_T(y)| \cdot 20\Delta \varepsilon) |Y_K^{big}|-80\varepsilon\Delta^3k\\ 
&\ge   (1 -\Delta \cdot 20\Delta \varepsilon) 4k-80\varepsilon\Delta^3k \\
&= 3k+(1-80\Delta^2\varepsilon-80\varepsilon\Delta^3)k>2k
\end{align*}
 Picking $f(y)$ for such $y$ one by one we can complete the embedding of $T$.
\end{proof}

 We'll also need the following standard lemma about finding matchings/stars in graphs of a certain minimum degree. 
\begin{lemma}\label{lem:min_degree_star/matching} 
Let $G$ be a graph with $\delta(G)\geq d$ and $|G|=n\geq 10\Delta d$. Then $G$ either contains $d$ vertex-disjoint $\Delta$-stars or a matching of size $5d$.
\end{lemma}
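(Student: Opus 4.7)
The plan is to first take a maximum matching $M$ of $G$. If $|M| \geq 5d$ the matching conclusion holds, so assume $|M| \leq 5d-1$. Then $|V(M)| \leq 10d-2$ and $U := V(G) \setminus V(M)$ is an independent set with $|U| \geq n - 10d + 2 \geq 10(\Delta-1)d + 2$, in which each vertex has $\geq d$ neighbours, all in $V(M)$. In particular the bipartite graph $B := G[V(M), U]$ has $e(B) \geq |U|d$, and the task reduces to finding $d$ vertex-disjoint $\Delta$-stars in $B$ with centres in $V(M)$ and leaves in $U$.

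Next I would exploit the maximality of $M$ via a Gallai-type observation: for each matching edge $vv' \in M$, if both $v$ and $v'$ have $U$-neighbours then $N_B(v) = N_B(v') = \{u^*_{vv'}\}$ for a single $u^*_{vv'} \in U$, since otherwise distinct $u \in N_B(v)$ and $u' \in N_B(v')$ would yield an augmenting path $u\,v\,v'\,u'$. This lets us collect the ``singly active'' endpoints of matching edges into a set $W \subseteq V(M)$ of size $|W| \leq |M| \leq 5d$ such that the bipartite subgraph $H := G[W,U]$ contains all but at most $2|M| \leq 10d$ edges of $B$. So $e(H) \geq |U|d - 10d$, and a direct edge count (combining the inequality $|U|d \leq e(B) \leq |W||U| + 2|M|$ with $|U| \geq 10(\Delta-1)d + 2$) rules out $|W| \leq d-1$, giving $|W| \geq d$ candidate centres.

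The final step is to extract $d$ vertex-disjoint $\Delta$-stars in $H$ with centres in $W$. My plan is a Hall-type argument for $b$-matchings: find $W_0 \subseteq W$ with $|W_0|=d$ satisfying Hall's condition $|N_H(S) \cap U| \geq \Delta|S|$ for all $S \subseteq W_0$, whence König's theorem for bipartite $b$-matchings yields the desired assignment of $\Delta$ disjoint $U$-leaves to each centre in $W_0$. The main obstacle is constructing such a $W_0$: using the large average degree in $W$ (at least $|U|d/|W| \geq 2(\Delta-1)d$), one would iteratively build $W_0$ by adding high-degree vertices while swapping out any whose inclusion creates a Hall-violating subset. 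The precise argument seems to require a delicate max-flow/LP duality analysis on $H$ exploiting that any Hall-violating subset of $W$ must have unusually small joint $U$-neighbourhood and hence its vertices have small $H$-degree, contradicting the abundance of high-degree vertices guaranteed by the edge count.
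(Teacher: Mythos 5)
Your first two steps are sound: the maximum matching, the Gallai-type observation that an augmenting path forbids two distinct exposed neighbours across a matching edge, and the count showing $|W|\ge d$ are all correct. But the argument stops exactly where the difficulty begins. Having $d$ (or even $5d$) candidate centres in $V(M)$ is very far from having $d$ vertex-disjoint $\Delta$-stars, and the step you defer to ``a delicate max-flow/LP duality analysis'' does not go through by any of the natural counting arguments under the hypothesis $n\ge 10\Delta d$. Concretely, suppose a maximum packing of vertex-disjoint $\Delta$-stars with centres in $A:=V(M)$ and leaves in $U$ has $s\le d-1$ stars, with centre set $C$ and leaf set $L$. Maximality gives $e(A\setminus C,\,U\setminus L)\le (\Delta-1)|A\setminus C|\le 10d(\Delta-1)$, while each $u\in U\setminus L$ sends at least $d-s$ edges into $A\setminus C$, so $e(A\setminus C,\,U\setminus L)\ge (d-s)(|U|-\Delta s)$. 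At $s=d-1$ the lower bound is roughly $|U|-\Delta d\approx 9\Delta d-10d$, which is \emph{smaller} than the upper bound once $d>2\Delta$: there is no contradiction. The obstruction is genuine: $d-1$ vertices of $V(M)$ can be joined to all of $U$, absorbing $d-1$ of the $\ge d$ edges leaving each $U$-vertex, so that the single remaining edge per $U$-vertex is spread over the other $\approx 9d$ vertices of $V(M)$, each receiving only about $10\Delta/9$ leaves --- barely above $\Delta$, and after discarding the $\approx\Delta d$ leaves already used the margin vanishes. So verifying Hall's condition for a suitable $W_0$ of size $d$ is not a routine consequence of your edge count; it is essentially the whole lemma, and your plan does not contain an argument for it.

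For comparison, the paper finishes by a purely elementary extremal argument that sidesteps this: take a maximal family of vertex-disjoint stars, each with between $1$ and $2\Delta$ edges, maximizing first the number of stars and then the total number of edges. If $d$ of the stars are full (have $2\Delta$ edges) they contain the required $\Delta$-stars; if there are $5d$ stars, picking one edge from each gives the matching; otherwise an uncovered vertex has a neighbour avoiding the centres of the full stars, and one can either split off a new star or enlarge an existing one, contradicting maximality. The factor $2$ in ``$2\Delta$'' is precisely the slack that makes the counting close --- the slack your final step is missing.
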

\begin{proof}
Let $S_1, \dots, S_k$ be a collection of vertex-disjoint stars picked so that
\begin{enumerate}[(1)]
\item $1\leq e(S_i)\leq 2\Delta$ for all $i$.
\item $k$ is as large as possible, subject to (1) holding.
\item $e(\bigcup S_i)$ is as large as possible, subject to (1) and (2) holding.
\end{enumerate}
Suppose that the stars are ordered so that   $2\Delta \geq e(S_1)\geq e(S_2)\geq \dots \geq e(S_k)$. 
Suppose that $e(S_1), \dots, e(S_t)=2\Delta$, and $e(S_{t+1}), \dots, e(S_k)< 2\Delta$. If $t\geq d$ or $k\geq 5d$, we are done, so suppose $t< d$ and $k< 5d$. Using $|G|\geq 10\Delta d$, there is a vertex $v\not\in \bigcup S_i$. For $i=1, \dots, t$, let $c_i$ be the center of $S_i$ (which exists since $e(S_i)=2\Delta\geq 2$).  Note that $v$ has a neighbour $u$ outside $\{c_1, \dots, c_t\}$ (since $d(v)\geq \delta(G)\geq d>t$). 

If $u$ is a leaf of some star $S_i$ with $e(S_i)\geq 2$, then let $S_i'=S_i-u$, and $S_{k+1}'= \{uv\}$. Replacing $S_i$ with $S_i'$ and $S_{k+1}'$ gives a family of stars with (1) satisfied and larger $k$ (contradicting maximality in (2)).

If $u$ is the center of some star $S_i$, or contained in a single-edge star $S_i$, then let $S_i'=S_i+uv$. Replacing $S_i$ by $S_i'$ gives a family of stars with $e(\bigcup S_i)$ increased. Note that since $u\not\in \{c_1, \dots, c_t\}$, we have $e(S_i)<2\Delta$ which gives $e(S'_i)=e(S_i)+1\leq 2\Delta$, and so (1) is maintained. The number of stars didn't change, so (2) is maintained. This contradicts maximality in (3).
\end{proof}

We'll also need the fact that trees always either have a lot of leaves or a lot of bare paths. This was originally proved by Krivelevich in~\cite{krivelevich2010embedding}. The version below is taken from~\cite{montgomery2019spanning}, Lemma 2.1.
\begin{lemma}[\cite{krivelevich2010embedding}, Lemma 2.1]\label{Lemma_tree_leaves_bare_paths}
Every tree either has  $|V(T)|/10t$ leaves (and hence a matching of $|V(T)|/10\Delta(T)t$ leaves), or has $|V(T)|/10t$ disjoint bare paths of length $t$.
\end{lemma}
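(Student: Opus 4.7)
The plan is to run the standard counting argument via leaves versus branching vertices. Let $n=|V(T)|$, let $L$ be the number of leaves and $B$ the number of branching vertices (degree $\geq 3$). By handshaking on the tree, $2(n-1)=\sum_v d(v) \geq L+2(n-L-B)+3B$, which rearranges to $L\geq B+2$. In particular, $L+B\le 2L$.

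Assume the first alternative fails, so $L<n/10t$, and hence $L+B<2L<n/5t$. Partition the edges of $T$ into maximal bare paths, that is, maximal sub-paths all of whose internal vertices have degree $2$ in $T$. Contracting each such path to a single edge produces a tree on the $L+B$ leaf/branching vertices, so there are at most $L+B-1<n/5t$ maximal bare paths in total.

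The plan is then to show most edges sit inside long maximal bare paths. The total number of edges lying in maximal bare paths of length $<t$ is at most $(t-1)(L+B-1)<(t-1)\cdot n/5t<n/5$. Hence the edges lying in maximal bare paths of length $\geq t$ total at least $(n-1)-n/5\ge n/2$ (for $n$ moderate). Now a maximal bare path of length $\ell\ge t$ contains $\lfloor\ell/t\rfloor\ge \ell/(2t)$ edge-disjoint (hence vertex-disjoint, aside from endpoints, which we can trim) bare sub-paths of length exactly $t$; summing, the total number of disjoint bare paths of length $t$ that we extract is at least $(n/2)/(2t)=n/(4t)\ge n/(10t)$, as required.

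The ``hence'' clause about matchings is then immediate: partition the at least $n/10t$ leaves according to their unique neighbour in $T$. Since each such neighbour has degree $\leq \Delta(T)$ it can be the parent of at most $\Delta(T)$ leaves, so the number of distinct leaf-parents is at least $n/(10\Delta(T)t)$, and taking one leaf-edge per parent yields a matching of leaves of this size. The main delicacy is purely bookkeeping: being consistent about whether a ``bare path of length $t$'' counts edges or internal degree-$2$ vertices, and making sure the $\lfloor \ell/t\rfloor\ge \ell/(2t)$ bound is applied only when $\ell\ge t$ (which is where it indeed holds); there is no real obstacle beyond this.
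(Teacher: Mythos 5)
The paper does not prove this lemma --- it is quoted from Krivelevich (Lemma 2.1, in the form given in Montgomery's survey) --- so there is no in-paper argument to compare against; I can only assess your proof on its own terms. Your argument is the standard one and is essentially correct: the handshaking bound $L\ge B+2$, the count of exactly $L+B-1$ maximal bare paths, the bound $(t-1)(L+B-1)<n/5$ on edges in short maximal bare paths, and the final slack between the $n/(4t)$ you produce and the $n/(10t)$ required are all sound, as is the ``hence a matching of leaves'' clause.

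The one step that needs repair is the passage from edge-disjoint to vertex-disjoint, which matters because the lemma is later applied to produce vertex-disjoint paths. Two separate issues arise. First, consecutive length-$t$ windows inside one maximal bare path share an endpoint, so ``trimming'' either shortens them below length $t$ or roughly halves their number; the honest count of vertex-disjoint length-$t$ subpaths of a path of length $\ell$ is $\lfloor(\ell+1)/(t+1)\rfloor$, and one checks (via $(t-1)(q-1)\ge 0$) that this is still at least $\ell/(2t)$ whenever $\ell\ge t$, so your per-path bound survives. Second, and not addressed at all in your write-up, subpaths extracted from two \emph{different} maximal bare paths can share a branching vertex, since distinct maximal bare paths meet exactly at their endpoints. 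The clean fix is the standard one: first delete the $L+B<n/(5t)$ leaves and branching vertices, so that the interiors of the maximal bare paths become pairwise vertex-disjoint path components (at most $L+B-1$ of them, together containing more than $4n/5$ vertices), and extract $\lfloor m/(t+1)\rfloor$ length-$t$ bare paths from each component of $m$ vertices; the resulting total exceeds $\frac{4n}{5(t+1)}-(L+B-1)\ge \frac{2n}{5t}-\frac{n}{5t}=\frac{n}{5t}\ge \frac{n}{10t}$. With that adjustment the proof is complete.
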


Now we can analyse the structure of near-extremal bipartite graphs. We restate Theorem~\ref{thm:bipartite_extremal} and prove it.
\begin{theorem*}\label{lem:extremal_analysis_bipartite_proof}
Let $\Delta^{-7}/1000\geq \varepsilon\ge k^{-1}$.
Let $T$ be a $k$-edge tree with $\Delta(T)\leq \Delta$.
Let $G$ be a connected graph with $\delta(G)\geq k/2$. Suppose that $G$ contains a bipartite subgraph $B$ with parts $X, Y$ such that $|X|\leq (1+\varepsilon)k/2$, $|Y|\geq 6k$, $\delta_B(X)\geq (1-\varepsilon)|Y|$, $\delta_B(Y)\geq (1-\varepsilon)k/2$. 
Then $G$ contains a copy of $T$.
\end{theorem*}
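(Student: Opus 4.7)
The plan is to apply Lemma~\ref{lem:greedy_embedding_bipartite} with $K := B$, $X_K := X$, $Y_K := Y$, $X_K^{big} := X$, and $Y_K^{big} := Y$; the density conditions on $K$ and on the big subsets follow at once from the theorem's hypotheses, since $\delta_B(X)\geq (1-\varepsilon)|Y|\geq\max(90\Delta^3\varepsilon k, (1-20\Delta\varepsilon)|Y|)$, $\delta_B(Y)\geq (1-\varepsilon)k/2\geq(1-160\Delta^3\varepsilon)k/2$, and $|Y|\geq 6k\geq 4k$. Label the bipartition of $T$ as $(X_T, Y_T)$ with $|X_T| \leq |Y_T|$, so $|X_T| \leq \lfloor (k+1)/2 \rfloor$; note that $\delta_B(Y)\geq(1-\varepsilon)k/2$ forces $|X|\geq (1-\varepsilon)k/2$. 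If $|X_T| \leq |X|$, the empty seed $S = \emptyset$ satisfies conditions (1)--(4) of Lemma~\ref{lem:greedy_embedding_bipartite} trivially and we are done.

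The substantive case is $|X_T| > |X|$. Set $m := |X_T|-|X|$. From $|X_T|, |Y_T| > |X|$ we obtain $2|X|+2 \le |X_T|+|Y_T|=k+1$, so $|X| \leq (k-1)/2$; a short integer check (separating even and odd $k$) then yields both $m \leq \delta(G) - |X|$ and $m \leq \varepsilon k$. I plan to choose $S := \{v_1,\ldots,v_m\} \subseteq X_T$ with the $v_i$ at pairwise distance $\geq 3$ in $T$ (feasible since $|X_T| \geq k/(2\Delta)$ by Fact~\ref{fact:large_bipart_in_bounded_degree_tree} while $m\Delta^2 \ll |X_T|$), and then embed each $v_i$ to a distinct $u_i \in V(G) \setminus X$ with $|N_G(u_i) \cap Y| \geq 2\Delta m$; the sets $N_T(v_i)\subseteq Y_T$ are pairwise disjoint thanks to the distance condition and so can be embedded greedily into $N_G(u_i) \cap Y$, producing $f\colon T[S\cup N_T(S)]\to G$. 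Since $f(S) \cap X = \emptyset$, condition (4) reads $|X| \geq |X_T|-m$, which holds with equality, and Lemma~\ref{lem:greedy_embedding_bipartite} extends $f$ to a full embedding of $T$ into $G$.

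The main obstacle I anticipate is producing the $u_i$'s, that is, $m$ disjoint vertices outside $X$ each with a $Y$-neighbourhood in $G$ of size at least $2\Delta m$. I expect a sub-case split by the size of $|X|$. In the loose sub-case $|X| \leq k/2 - C\Delta^4\varepsilon k$ for an appropriate constant $C$, every vertex of $V(G)\setminus X$ has $\geq C\Delta^4\varepsilon k$ non-$X$ neighbours, so counting $e_G(V(G)\setminus X, Y)\geq |Y|(\delta(G)-|X|)$ and pigeonholing produces many candidates. In the tight sub-case $|X| > k/2 - C\Delta^4\varepsilon k$ the overflow $m$ is itself $O(\Delta^4\varepsilon k)$, and the connectivity of $G$ together with Lemma~\ref{lem:min_degree_star/matching} applied to a subgraph supported near $Y$ can extract the small structure needed to anchor the seed.
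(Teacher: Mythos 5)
Your reduction to Lemma~\ref{lem:greedy_embedding_bipartite} and your handling of the easy case $|X_T|\le |X|$ match the paper, and your arithmetic for $m=|X_T|-|X|\le \min(\varepsilon k,\ \lceil k/2\rceil-|X|)$ is correct. But the entire difficulty of the theorem lives in the step you defer --- constructing the seed --- and your plan for it rests on a false premise. You propose to find $m$ vertices $u_i\in V(G)\setminus X$ each with $|N_G(u_i)\cap Y|\ge 2\Delta m$ and to embed \emph{arbitrary} well-spread $v_i\in X_T$ onto them. Such vertices need not exist: take $k$ odd, $B=K_{(k-1)/2,\,6k}$ complete bipartite, and add a perfect matching on $Y$ to restore $\delta(G)\ge k/2$. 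Then every vertex outside $X$ has exactly one neighbour in $Y$, yet $m=1$ for a balanced tree, so your anchoring step has nothing to anchor to (while $T$ does embed, by routing two consecutive $Y_T$-vertices of a bare path or a leaf edge through a matching edge of $Y$). Your ``loose sub-case'' counting is also broken: $|V(G)|$ is unbounded, so $e_G(V(G)\setminus X,Y)$ being large does not pigeonhole into vertices with many $Y$-neighbours; and the non-$X$ neighbours of $Y$ need not lie in $Y$ at all.

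What is actually needed, and what the paper does, is to let the available structure dictate \emph{which} vertices of $T$ go into the seed. The paper enlarges $Y$ to $Y'$ (vertices with $\ge(1-160\Delta^3\varepsilon)k/2$ neighbours in $X$) and $X$ to $X\cup X'$, sets $Z$ to be everything else, and splits: if $Z\ne\emptyset$, connectivity supplies one edge into $Z$ and a whole subtree of $T$ of size $\Theta(\Delta\varepsilon k)$ is embedded greedily inside $Z$ (this is why the connectivity hypothesis is there --- your sketch never really uses it); if $Z=\emptyset$, then $\delta(G[Y'])\ge d$ and Lemma~\ref{lem:min_degree_star/matching} gives either $d$ disjoint $\Delta$-stars or a large matching in $Y'$, after which Lemma~\ref{Lemma_tree_leaves_bare_paths} is invoked to choose the seed as star centres, leaves, or interior vertices of bare paths accordingly --- a matching edge can only absorb a tree vertex of degree $1$ or a bare-path vertex, never an arbitrary $v_i$. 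None of this case analysis appears in your sketch, and it cannot be avoided. Two smaller points: your pairwise distance $\ge 3$ is insufficient (at distance exactly $3$ there is an edge of $T$ between $N_T(v_i)$ and $N_T(v_j)$ that $f$ would have to realise inside $Y$); you need distance $\ge 4$. And you should work with $Y'$ rather than $Y$ as $Y_K$, since the minimum-degree guarantee $\delta\ge d$ holds in $G[Y']$, not in $G[Y]$.
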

\begin{proof}[Proof of Theorem~\ref{thm:bipartite_extremal}]
Note that $|X|\geq \delta_B(Y)\geq (1-\varepsilon)k/2$. 
Let $Y'=\{y\in G\setminus X: |N_G(y)\cap X|\geq (1-160\Delta^3\varepsilon)k/2\}$, noting $Y\subseteq Y'$. 
Let $X'\subseteq \{x\in G\setminus X\cup Y': |N(x)\cap Y'|\geq 90\Delta^3\varepsilon k\}$ be a set of order $\min(|\{x\in G\setminus X\cup Y': |N(x)\cap Y'|\geq 90\Delta^3\varepsilon k\}|, \varepsilon k)$. Note that $G[X\cup X', Y]$ satisfies the assumptions of Lemma~\ref{lem:greedy_embedding_bipartite} on $K$ (with $X_K:= X\cup X', Y_K:= Y'$, $X_K^{big}=X, Y_K^{big}=Y$). Also $T$ satisfies the assumptions of Lemma~\ref{lem:greedy_embedding_bipartite} on $T$. 
Let $k_1\leq k_2$ be the sizes of the parts $X_T, Y_T$ of $T$ with $|X_T|=k_1$, $|Y_T|=k_2$, noting that $k_1+k_2=k+1$ and so $k_1\le \lfloor\frac{k+1}{2}\rfloor$.

Suppose  $|X\cup X'|\geq k_1$. Then  $S=\emptyset$, and the trivial function $f:\emptyset\to V(G)$ satisfy the assumptions of Lemma~\ref{lem:greedy_embedding_bipartite}. This gives a copy of $T$. 
Thus we can assume that $|X\cup X'|<k_1$. This gives $|X\cup X'|< k_1\le k_1\le \lfloor\frac{k+1}{2}\rfloor \overset{\text{F. \ref{fact:ceilfloor}}}{=}\lceil k/2\rceil$. Using that $|X\cup X'|$ is an integer, this implies $|X\cup X'|\le k/2$. Combined with $|X|\geq \delta(Y)\geq (1-\varepsilon)k/2$, this  tells us $|X'|<\varepsilon k$ and so $X'= \{x\in G\setminus X\cup Y': |N(x)\cap Y'|\geq 90\Delta^3\varepsilon k\}$.

Let $Z=V(G)\setminus (X\cup X'\cup Y')$, noting that vertices $z\in Z$ have $|N(z)\cap Z|\geq k/2-  |N(z)\cap X|-  |N(z)\cap (X'\setminus X)|- |N(z)\cap Y'|\geq k/2- (1-160\Delta^3\varepsilon)k/2- \varepsilon k- 90\Delta^3 \varepsilon k\geq 20\Delta^3\varepsilon k$ (using $|N(z)\cap X|\le(1-160\Delta^3\varepsilon)k/2$ since $z\not\in Y'$,  $|N(z)\cap (X'\setminus X)|\le |X'|\le \varepsilon k$,  $|N(z)\cap Y'|\le 90\Delta^3 \varepsilon k$ since $z\not\in X'$).
We break into cases.
\begin{itemize}
\item [Case 1:]
Suppose that $Z\neq \emptyset$. By connectedness, there is some edge $zv$ with $z\in Z, v\in X\cup X'\cup Y'$. Use Lemma~\ref{lem:split_tree_by_edge}  to get an edge $z_Tv_T$, such that the components $S,R$ of $T-z_Tv_T$ with $z_T\in S$, $v_T\in R$ have  $e(S)\in [4\Delta\varepsilon k, 16\Delta^2\varepsilon k]$.    Use Fact~\ref{fact:greedy_tree_embedding} to get an embedding $f:S\to Z$, embedding $z_T$ to $z$. 
Extend $f$ to embed $v_T$ to $v$. There are two subcases depending on whether $v\in X\cup X'$ or $v\in Y'$:
\begin{itemize}
\item[ Case 1.1:] Suppose $v\in Y'$. 
Let $Y_T'\in \{X_T, Y_T\}$ be the part of $T$  containing $v_T$, and $X_T'$ the other part of $T$. Since the parts of $S$ have size $\geq |S|/2\Delta\geq 2\varepsilon k$ (by Fact~\ref{fact:large_bipart_in_bounded_degree_tree}), we have $|X_T'\setminus S|\leq \max(|X_T|, |Y_T|)-2\varepsilon k\leq (1+\varepsilon)k/2-2\varepsilon k<| X|\le|X'\cup X|=|X_K\setminus im_f|$.
Thus  $S$, $f$, $X_T', Y_T'$ satisfy Lemma~\ref{lem:greedy_embedding_bipartite}, which gives an embedding of $T$. Indeed $|S|\le 16\Delta^2\varepsilon k\le 32\Delta^2\varepsilon k$ by construction, $N_T(S)=\{v_T\}\subseteq Y_T'$, $f(N_T(S))=\{v\}\subseteq Y'$, and $|X_K\setminus f(S)|=|X'\cup X|>|X_T'\setminus S|$. 

\item[Case 1.2:] Suppose $v\in X\cup X'$. Let $X_T'\in \{X_T, Y_T\}$ be the part of $T$  containing $v_T$, and $Y_T'$ the other part of $T$.
 Extend $f$ to embed $N_T(v_T)\setminus \{z_T\}$ to $N(v)\cap Y'$ (this is possible because all vertices in $X\cup X'$ have $\ge \min(\delta_{Y}(X), \delta_{Y'}(X'))\ge \min((1-\varepsilon)|Y|, 90\Delta^3\varepsilon k)= 90\Delta^3\varepsilon k>\Delta$ neighbours in $Y'$. Also nothing has been embedded into $Y'$ yet so there is nothing to avoid). Set $S'=S\cup \{v_T\}$. Since the parts of $S'$ have size $\geq |S'|/2\Delta\geq 2\varepsilon k$ (by Fact~\ref{fact:large_bipart_in_bounded_degree_tree}), we have $|X_T'\setminus S'|\leq \max(|X_T|, |Y_T|)-2\varepsilon k\leq (1+\varepsilon)k/2-2\varepsilon k< |X|-1\le |X'\cup X|-1=|X_K\setminus \{v\}|=|X_K\setminus im_f|$.
 Thus  $S'$, $f$, $X_T', Y_T'$ satisfy Lemma~\ref{lem:greedy_embedding_bipartite}, which gives an embedding of $T$. Indeed $|S'|\le 16\Delta^2\varepsilon k+1\le 32\Delta^2\varepsilon k$ by construction, $N_T(S')=N_T(v_T)\setminus \{z_T\}\subseteq Y_T'$ by ``$v_T\in X_T'$'', $f(N_T(S'))=f(N_T(v_T)\setminus \{z_T\})\subseteq Y'$ by construction, and $|X_K\setminus f(S')|=|X'\cup X|-1> |X_T'\setminus S'|$. 
\end{itemize}

\item [Case 2:]
Suppose that $Z=\emptyset$. Setting $d=\lceil k/2\rceil-|X'\cup X|$, recall that we've established $d> 0$. Since $|X|\geq (1-\varepsilon)k/2$, we have $d\le 2\varepsilon k$.
Since $Z=\emptyset$, we have $\delta(G[Y'])\geq \delta(G)-|V(G)\setminus Y'|=\delta(G)-|X'\cup X|\ge k/2-|X'\cup X|= d$. 
Using $k_1>|X'\cup X|$ we also get $d\ge k/2-|X'\cup X|= \frac{k_2+k_1-1}{2}-|X'\cup X|> \frac{k_2+k_1-1}{2}-k_1=\frac{k_2-k_1-1}{2}$. 
Since $|Y'|\ge |Y|\ge 6k\ge 10\Delta (2\varepsilon k)\ge 10\Delta d$, Lemma~\ref{lem:min_degree_star/matching} applies to $G[Y']$ to give one of the following cases:

\begin{itemize}
\item [Case 2.1:]
Suppose $G[Y']$ contains $d$ vertex-disjoint $\Delta$-stars $S'_1, \dots, S'_d$.  
Let $x_1, \dots, x_{d}\in X_T$ be vertices at pairwise distance $\geq 4$ in $T$ (they can be found since Fact~\ref{fact:large_bipart_in_bounded_degree_tree} gives $|X_T|\ge k/2\Delta\geq \Delta^4 (2\varepsilon k)\geq \Delta^4d$ ). Let $S_1, \dots, S_{d}$ be the stars in $T$ centered at these vertices, noting that they are vertex-disjoint and have no edges between them. Define $f:\bigcup S_i \to G$ to embed $S_i$ into $S_i'$.  
Now $S:=\{x_1, \dots x_d\}$ and $f$ satisfy Lemma~\ref{lem:greedy_embedding_bipartite}: Indeed $|S|= d\le 2\varepsilon k\le 32\varepsilon\Delta^2 k$, $N_T(S)\subseteq Y_T$ since $S\subseteq X_T$, $f(N_T(S))\subseteq im_f\subseteq Y'$ by construction, and $|X_K\setminus f(N_T(S))|=|X_K|=|X\cup X'|=\lfloor k/2\rfloor-d\ge |X_T|-d=|X_T\setminus S|$.  Invoking Lemma~\ref{lem:greedy_embedding_bipartite} gives an embedding of $T$.

\item [Case 2.2:]
Suppose $G[Y']$ contains a matching $M$ of size $3d$. 
By Lemma~\ref{Lemma_tree_leaves_bare_paths}, $T$ either contains $k/100\Delta$ disjoint bare paths   of length $10$, or contains a matching of $k/100\Delta $ leaves. We split into further cases.
\begin{itemize}
\item[Case 2.2.1:]  Suppose that $T$ contains $k/100\Delta\ge 2\varepsilon k\ge d$ disjoint bare paths  of length $10$. This implies that there are bare paths $P_1, \dots, P_d$ of length $4$ which are at distance $\geq 2$ from each other and whose endpoints are in $Y_T$ (take appropriate middle paths of length $4$ in the paths of length $10$ that we're given). Label $P_i=a_ib_ic_id_ie_i$ (so $a_i, c_i, e_i\in Y_T, b_i, d_i\in X_T$).  Take a submatching of $M'\subseteq M$ off size $2d$, and label it $M'=\{x_1^-y_1^-,x_1^+y_1^+,\dots, x_d^-y_d^-,x_d^+y_d^+\}$. 
Note that for all $y_1,y_2\in Y'$,  $|N(y_1)\cap N(y_2)\cap X|\ge |X|-|X\setminus N(y_1)|-|X\setminus N(y_2)|=|X\cap N(y_1)|+|X\cap N(y_2)|-|X|\ge (1-160\Delta^3\varepsilon)k/2+(1-160\Delta^3\varepsilon)k/2-k/2 =(1-320\Delta^3 \varepsilon )k/2\ge 2\varepsilon k\ge d$. 
Using this, for each $i=1, \dots, d$, we can pick a distinct common neighbour $z_i\in N(x_i^-)\cap N(x_i^+)\cap X$.  
Define  $f:\bigcup P_i\to G$ to embed each $P_i=a_ib_ic_id_ie_i$ to the path $y_i^-x_i^-z_ix_i^+y_i^+$.  
Now taking  $S=\{b_i,c_i,d_i: i=1, \dots, d\}$ together with this $f$ will satisfy Lemma~\ref{lem:greedy_embedding_bipartite}. Indeed $|S|=3d\le 3(2\varepsilon k)\le 32 \varepsilon\Delta^2k$, $N_T(S)=\{a_i,e_i:i=1, \dots, d\}\subseteq Y_T$, $f(N_T(S))\subseteq \{y_i^-, y_i^+: i=1, \dots, d\}\subseteq Y'$ by construction, and $|X_K\setminus f(S)|=|X_K\setminus \{z_1, \dots, z_d\}|=|X_K|-d=|X\cup X'|-d=\lceil k/2\rceil-2d\geq|X_T|-2d=  |X_T\setminus \{b_i, d_i:i=1, \dots, d \}|=|X_T\setminus S|$.  Invoking Lemma~\ref{lem:greedy_embedding_bipartite} gives an embedding of $T$.

\item [Case 2.2.2:]
Suppose that $T$ contains a matching of $k/100\Delta d\ge 6\Delta^5\cdot 2\varepsilon k \ge 6\Delta^5d$ leaves. 
Pick a submatching $N=\{y_1x_1, \dots, y_{3d}x_{3d}\}$ of $3d$ of these leaves that are at pairwise distance $\geq 4$ from each other and which are all in the same part of $T$. Assume that these edges are labelled so that $x_1, \dots, x_{3d}$ are the degree $1$ vertices. Let $X'_T\in \{X_T, Y_T\}$ be the part containing $x_1, \dots, x_{3d}$, and $Y_T'$ the other part, noting $k_2\ge |X'_T|$. Define $f:N\to G$ to map $N$ to $M$ and set $S=\{x_1, \dots, x_{3d}\}$.  These satisfy Lemma~\ref{lem:greedy_embedding_bipartite}: Indeed $|S|=3d\le 6\varepsilon k\le 32\varepsilon\Delta^2k$, $N_T(S)=\{y_1, \dots, y_{3d}\}\subseteq Y_T'$ by choice of $X_T'/Y_T'$, $f(N_T(S))\subseteq im_f\subseteq V(M)\subseteq Y'$ by construction, and $|X_K\setminus f(S)|=|X_K|=|X\cup X'|= \lceil k/2\rceil-d\ge  \frac{k_2+k_1-1}{2}-d = k_2-\frac{k_2-k_1-1}{2}-d-1 \ge k_2-2d-1\ge |X_T'|-3d= |X_T'\setminus S|$.
\end{itemize}
\end{itemize}
\end{itemize} 
\end{proof}

\section{Optimal bounds in Theorem~\ref{main_theorem}}
Here we prove Theorem~\ref{optimal_theorem} which sharpens Theorem~\ref{main_theorem}. The main ingredient for this is Theorem~\ref{main_theorem} itself, which is used as a block box. This is combined with some arguments for taking a graph with a $O(k)$-sized cover and outputting a graph with a $(1+o(1))k$-sized cover. The following lemma is one version of this. Its proof is a simpler version of Lemma~\ref{lem:stability_bipartite}.
\begin{lemma}\label{lem:thm_opt_sparse}
Let $1\gg \Delta^{-1}, q\gg \varepsilon\gg \gamma\gg  k^{-1}$.
Let $T$ be a tree with $\Delta(T)\leq \Delta$.
Let $G$ be a bipartite graph with parts $X,Y$, $|Y|\geq 400\varepsilon^{-1/2}k$ and  $\delta(G)\geq \varepsilon^{1/16} k/2$.  Let $D$ be a $q$-cut-dense  bipartite subgraph with parts $X_D, Y_D$ having $X\cap V(G)\subseteq X_D$, $|X_D|\le 10k$. If $G$ has no copy of $T$, then there is a subgraph $H$ of $G$ with $e(H)\ge e(G)- 2\varepsilon^{1/16}kn$ and  $|V(H)\cap X|\le k_1+10\gamma k$. 
\end{lemma}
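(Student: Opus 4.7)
The plan is to follow the skeleton of the proof of Lemma~\ref{lem:stability_bipartite}, but truncate after the bound on $|V(H)\cap X|$ is established, since the further refinement producing structured parts $X',Y'$ is not needed here. Write $k_1\leq k_2$ for the part sizes of $T$.

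First I would dispose of the easy case: if $|D|\geq 2q^{-1}k$, then Fact~\ref{fact:cut_dense_min_degree} gives $\delta(D)\geq 1.8k$, so Fact~\ref{fact:greedy_tree_embedding} embeds $T$ in $D\subseteq G$, contradicting the hypothesis. Hence $|D|<2q^{-1}k$. I would then take a maximal family of vertex-disjoint \emph{good} subgraphs $H_1,\dots,H_s\subseteq G$, where $H_i$ is good if $|V(H_i)\cap Y|=400\varepsilon^{-1/8}k$, $|V(H_i)\cap X|\geq \varepsilon^{1/8}k/2$, and $\delta_{H_i}(V(H_i)\cap X)\geq 2k$. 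Setting $H=\bigcup_i H_i$ and letting $J$ be the subgraph of edges of $G$ with both endpoints outside $V(H)$, the argument from Lemma~\ref{lem:stability_bipartite} based on Lemma~\ref{lem:find_good_graph} (applied with $k'=\varepsilon^{1/8}k$, $C'=20\varepsilon^{-1/8}$) gives $e(J)\leq \varepsilon^{1/8}k|Y|$; otherwise we would extract another good subgraph contradicting maximality. Since the $X$-parts of the $H_i$ are disjoint and $|X|\leq 10k$, we have $s\leq 20\varepsilon^{-1/8}$ and $|V(H)\cap Y|\leq 8000\varepsilon^{-1/4}k$.

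The heart of the argument is showing $|V(H)\cap X|\leq k_1+\gamma k$, which I would prove by contradiction. Set $G^\ast:=G[V(H)\cup V(D)]$, a bipartite subgraph of $G$ with $X$-side equal to $X$ (as $X\subseteq V(D)$). Each $v\in V(G^\ast)\setminus V(D)$ lies in $Y$ and has all its $G$-neighbours in $X\subseteq V(D)$, so $|N_{G^\ast}(v)\cap V(D)|\geq \delta(G)\geq\varepsilon^{1/16}k/2$. Combined with $|D|\geq |X|\geq\delta(G)\geq\varepsilon^{1/16}k/2$ and $|V(G^\ast)|\leq 10^4\varepsilon^{-1/4}k$, Lemma~\ref{lem:cut_dense_add_vertices} makes $G^\ast$ a $q'$-cut-dense graph with $q'$ polynomial in $q,\varepsilon$, which in the hierarchy $1\gg q\gg\varepsilon\gg\gamma$ still dominates $(\gamma/400)^2$. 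Assuming $|V(H)\cap X|\geq k_1+\gamma k$, the $X$-side of $G^\ast$ contains $\geq k_1+\gamma k$ vertices whose $G^\ast$-degree is $\geq 2k$ (inherited from the good subgraphs), and $k_2+\gamma k\leq 2k$, so Lemma~\ref{lem:regularity_tree_embedding_bipartite} (with $\varepsilon'=(\gamma/400)^2$) produces a copy of $T$ in $G^\ast\subseteq G$, contradicting the hypothesis that $G$ is $T$-free.

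To finish, I would take $H^\ast:=G\setminus (X\setminus V(H))$, giving $|V(H^\ast)\cap X|\leq k_1+\gamma k\leq k_1+10\gamma k$. The deleted edges are precisely those with $X$-endpoint outside $V(H)$; these split into edges of $J$ (at most $\varepsilon^{1/8}k|Y|$) and edges into $Y\cap V(H)$ (at most $|X|\cdot|Y\cap V(H)|\leq 8\cdot 10^4\varepsilon^{-1/4}k^2$). Using $n\geq|Y|\geq 400\varepsilon^{-1/2}k$ and the smallness of $\varepsilon$, both contributions are bounded by $\varepsilon^{1/16}kn$, yielding $e(G)-e(H^\ast)\leq 2\varepsilon^{1/16}kn$ as required. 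The main technical obstacle is the parameter juggling in the cut-density estimate: the weaker minimum-degree hypothesis $\delta(G)\geq\varepsilon^{1/16}k/2$ (versus $(1-\varepsilon)k/2$ in Lemma~\ref{lem:stability_bipartite}) forces a much smaller $\delta$ in Lemma~\ref{lem:cut_dense_add_vertices}, but the generous separation $q\gg\varepsilon\gg\gamma$ keeps the resulting $q'$ safely above the threshold required by Lemma~\ref{lem:regularity_tree_embedding_bipartite}.
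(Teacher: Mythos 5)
Your proposal is correct and follows essentially the same route as the paper: the same easy case via Fact~\ref{fact:cut_dense_min_degree}, the same maximal family of good subgraphs with Lemma~\ref{lem:find_good_graph} bounding $e(J)$, the same cut-density upgrade of $H\cup D$ via Lemma~\ref{lem:cut_dense_add_vertices}, and the same application of Lemma~\ref{lem:regularity_tree_embedding_bipartite} to cap $|V(H)\cap X|$. The only cosmetic difference is in the final edge count: the paper defines $J$ as the edges missing $V(H)\cap X$ so that the discarded edges are exactly $J$, whereas you define $J$ relative to all of $V(H)$ and separately bound the edges from $X\setminus V(H)$ into $Y\cap V(H)$ --- both give the stated $2\varepsilon^{1/16}kn$.
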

\begin{proof}
Let $\Delta, \varepsilon\gg \gamma\gg   k^{-1}$. 
 If $|D|\geq 2q^{-1} k$, then by Fact~\ref{fact:cut_dense_min_degree}, $\delta(D)\geq (0.9 q)(2q^{-1} k)\ge 1.8k$ and Fact~\ref{fact:greedy_tree_embedding} gives a copy of $T$.  So assume  $|D|< 2q^{-1} k$. 
 Say a subgraph $H$ of $G'$ is good if $|V(H)\cap Y|= 400\varepsilon^{-1/8} k$, $|V(H)\cap X|\geq \varepsilon^{1/8} k/2$, and  $\delta_H(X\cap H)\geq 2k$. Let $H_1, \dots, H_s$ be  a maximal collection of good subgraphs which are vertex-disjoint in $X$. Let  $H=\bigcup H_i$.  
 Let $J$ be the subgraph on $V(G)$ consisting of edges of $G'$ disjoint from $V(H)\cap X$. 

We claim that $e(J)\leq \varepsilon^{1/8} k|Y|$. Indeed otherwise apply Lemma~\ref{lem:find_good_graph} to $J$ with $X=X, Y=Y, k'= \varepsilon^{1/8} k, C'=20\varepsilon^{-1/8}$ in order to get a subgraph with $|V(H)\cap Y|= (20\varepsilon^{-1/8})^2\varepsilon^{1/8}k=400\varepsilon^{-1/8}k$, $|V(H)\cap X|\geq \varepsilon^{1/8} k- |X|/(20\varepsilon^{-1/8})\overset{{|X|\leq 10k}}{\geq} \varepsilon^{1/8} k/2$, and  $\delta_H(X\cap H)\geq (20\varepsilon^{-1/8})\varepsilon^{1/8} k/4\geq 2k$. This is a good subgraph, contradicting maximality.
 
Note that since $|X|\leq 10k$  and $H_1, \dots, H_s$ are vertex disjoint in $X$, each having $\varepsilon^{1/8} k/2$ vertices in $X$, we have $s\le |X|/(\varepsilon^{1/8} k/2)\le 10k/(\varepsilon^{1/8} k/2)= 20\varepsilon^{-1/8}$ and so $H:=H_1\cup \dots \cup H_s$ has $|H\cup D|\leq |D|+(400\varepsilon^{-1/8} k)s \leq 2q^{-1}k+ (20\varepsilon^{-1/8})(400\varepsilon^{-1/8}k)\leq  (2q^{-1}+8010\varepsilon^{-1/4})k$. Also vertices in $H\setminus D\subseteq H\setminus X\subseteq Y$ have $\delta(G)\geq \varepsilon^{1/16}k$ neighbours in $D$. 
Note $|D|\geq |X|\geq \delta(G)\geq \varepsilon^{1/16}k$.
By  Lemma~\ref{lem:cut_dense_add_vertices}, we get that $H\cup D$ is $\frac{q(\varepsilon^{1/16}k/|D|)|D|^2}{4|H\cup D|^2}\geq \frac{q(\varepsilon^{1/16}k)^2}{4(2q^{-1}+8000\varepsilon^{-1/4})^2k^2}\geq \varepsilon^2 q^4$-cut-dense. 

 Now $|H\cap X|< k_1+\gamma k$, since otherwise   Lemma~\ref{lem:regularity_tree_embedding_bipartite} (applied to $H\cup D$ with $\varepsilon'=(\gamma/200)^2$) gives a copy of $T$. Letting $H'$ be the set of all edges touching $H\cap X$ satisfies the lemma because edges outside this are all in $J$  and $e(J)\le \varepsilon^{1/8} k|Y|$.
\end{proof}

Another part of the proof of Theorem~\ref{optimal_theorem} is to prove that theorem for dense graphs. This will use the following.
\begin{theorem}[R\"odl-Wysocka, \cite{rodl1997note}]\label{thm:Rodl_Wosocka}
Let $1/2\geq \gamma\gg n^{-1}$.
Every graph with $e(G)\geq \gamma n^2$ contains a $\lceil 0.4\gamma^3 n\rceil$-regular subgraph.
\end{theorem}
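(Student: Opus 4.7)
The plan is a three-step reduction: first pass to a high-minimum-degree subgraph, then to an almost-regular one, and finally extract exact regularity. For the first step, apply Fact~\ref{fact:subgraph_min_degree_e/2} with $x = \gamma n$ (valid since $e(G) \geq \gamma n \cdot n$) to obtain $H \subseteq G$ with $\delta(H) \geq \gamma n$ and $|H| \geq \gamma n$. This already produces a subgraph of minimum degree $\Theta(\gamma n)$, free of charge.

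For the second step, I would partition $V(H)$ into dyadic degree-classes $V_j = \{v : 2^j \leq d_H(v) < 2^{j+1}\}$. Since the maximum degree is at most $|H|-1$, there are at most $O(\log n)$ non-empty classes, and by averaging some class $V_{j^{\ast}}$ supports a constant fraction of the edges. Restricting to the bipartite subgraph $H[V_{j^{\ast}}, N_H(V_{j^{\ast}})]$ and iterating Fact~\ref{fact:subgraph_min_degree_e/2} yields a subgraph $H'$ in which all degrees lie within a bounded ratio. For the third step, from such an almost-regular bipartite subgraph with degrees in $[d, 2d]$, K\"onig's edge-colouring theorem splits the edge set into $O(d)$ matchings, and combining a well-chosen subset of these matchings (or trimming a random union) produces a $k$-regular subgraph for some $k = \Omega(d)$.

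The main obstacle is quantitative. The dyadic pigeonhole as written incurs a $\log n$ loss, which is incompatible with the polynomial bound $\lceil 0.4\gamma^3 n\rceil$ in the statement. R\"odl and Wysocka~\cite{rodl1997note} eliminate this logarithm by replacing the dyadic split with a random-threshold argument: one samples a cutoff continuously in the degree range and uses concentration of the induced degree sequence to locate a regular subgraph directly. The exponent $3$ in $\gamma^3$ arises from compounding the three stages (min-degree reduction, restriction to a single degree scale, and matching-based regular extraction), while the constant $0.4$ reflects the optimal parameter choices in the random-threshold step; this careful bookkeeping is the technical heart of~\cite{rodl1997note}, and for the purposes of the present paper the result is invoked as a black box.
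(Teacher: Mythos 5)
The paper never proves this statement: it is quoted from \cite{rodl1997note} and used purely as a black box (inside the proof of Lemma~\ref{lem:2k_connected_matching_bound}), so there is no internal argument to compare yours against. Judged as a proof, your sketch has a genuine gap at the final stage. Applying K\"onig's edge-colouring theorem to an almost-regular bipartite graph with degrees in $[d,2d]$ yields roughly $2d$ matchings, but these matchings are supported on different vertex sets; a union of $k$ of them has maximum degree at most $k$ while giving no lower bound at all on the minimum degree, and ``trimming a random union'' cannot repair this, since exact regularity is precisely the hard content of the theorem and does not follow from approximate or averaging arguments. The mechanism that actually works (and is what \cite{rodl1997note} uses) is different: one first locates a balanced bipartite subgraph with parts of equal size $m$ whose minimum degree is a sufficiently large fraction of $m$, and then extracts a spanning $k$-regular subgraph by a max-flow/min-cut (equivalently Gale--Ryser) argument, which succeeds for $k$ up to roughly $\delta-m/2$; no decomposition into matchings is involved.

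Your middle step is also not self-contained: you acknowledge that the dyadic pigeonhole loses a factor of $\log n$, which is fatal for the polynomial bound $\lceil 0.4\gamma^3 n\rceil$, and you resolve this only by gesturing at an unexplained ``random-threshold argument'' attributed to the original authors. Since the surrounding paper treats the theorem as an external citation, deferring to \cite{rodl1997note} is perfectly acceptable here --- but the sketch as written should not be presented as a proof, because its one concrete mechanism for producing regularity does not work.
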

\begin{lemma}[\cite{hyperstability}]\label{lem:main_tree_embedding_lemma}
Let  $p, q,\varepsilon,\Delta^{-1}\gg  d^{-1}$
 and let $T$ be a order $d$ tree with $\Delta(T)\leq \Delta$. Let $G$ be a  $n$-vertex, $q$-cut-dense graph,  that contains an  $pn$-regular, order $\geq (2+\varepsilon)d$ subgraph $R$.  Then $G$ has a copy of $T$. 
\end{lemma}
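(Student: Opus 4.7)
The plan is to follow the regularity-based strategy of Lemmas~\ref{lem:regularity_tree_embedding_bipartite} and~\ref{lem:regularity_tree_embedding_nonbipartite}, extracting the matching structure required by Lemma~\ref{lem:stein_tree_embedding} from the $pn$-regularity of $R$. First I dispose of the easy case: if $pn\ge d$ then $\delta(R)=pn\ge d\ge\Delta$, so Fact~\ref{fact:greedy_tree_embedding} embeds $T$ directly into $R\subseteq G$. From now on assume $pn<d$.

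I would apply Szemer\'edi's Regularity Lemma (Theorem~\ref{thm:regularity_lemma}) to $G$ with an initial partition refining $\{V(R),V(G)\setminus V(R)\}$ and parameters chosen so that $d^{-1}\ll\varepsilon_0\ll\eta_0\ll p,q,\varepsilon,\Delta^{-1}$. Let $R_G$ be the resulting $(\varepsilon_0,\eta_0)$-reduced graph on $t$ parts of size $n/t$, and let $W_1,\dots,W_s$ be the clusters lying inside $V(R)$, so that $s\ge(2+\varepsilon/2)dt/n$. Since $G$ is $q$-cut-dense, Fact~\ref{fact:cut_dense_preserved_by_regularity} together with Lemma~\ref{lem:cut_dense_connected_cluster_graph} gives that $R_G$ is connected.

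The key step is to lift the $pn$-regularity of $R$ to a large fractional matching of $R_G$. Weighting each $R$-edge by $1/(pn)$ yields a fractional matching of $R$ of total weight $|R|/2$; averaging over clusters, the function $h(W_i,W_j):=e_R(W_i,W_j)/(pn\cdot n/t)$ satisfies $\sum_j h(W_i,W_j)=1$ and $\sum_{i<j}h(W_i,W_j)=|R|t/(2n)$. The $h$-weight carried by cluster pairs that fail to be edges of $R_G$ (either not $\varepsilon_0$-regular, or with $G$-density below $\eta_0$) is $O((\varepsilon_0+\eta_0)t/p)$, which my parameter choice makes much smaller than $\varepsilon|R|t/(8n)$. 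Restricting $h$ to edges of $R_G$ therefore gives $\nu_f(R_G)\ge(1+\varepsilon/4)dt/(2n)$, comfortably above the $(1+100\sqrt{\varepsilon_0})dt/(2n)$ threshold of Lemma~\ref{lem:stein_tree_embedding}. Invoking Lemma~\ref{lem:matching_from_fractional_matching} converts this into a matching $M'$ in a refined reduced graph $R'_G$ with $|R'_G|=2t$ and $|V(M')|\ge(1+100\sqrt{\varepsilon_0})d|R'_G|/n$; if $R'_G$ is non-bipartite, Lemma~\ref{lem:stein_tree_embedding}(2) immediately yields a copy of $T$ in $G$.

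The genuinely delicate case, which I expect to be the main obstacle, is when $R'_G$ is bipartite. Bipartite $pn$-regularity of $R$ forces $|V(R)\cap X|=|V(R)\cap Y|=|R|/2$, and I would aim to verify the hypotheses of Lemma~\ref{lem:stein_tree_embedding}(1) with $X'$ the clusters inside one side of $V(R)$: the size condition $|X'|\ge(1+100\sqrt{\varepsilon_0})k_1 t/n$ is immediate from $|X'|\approx|R|t/(2n)$ and $k_1\le d/2$. For the minimum degree condition $\delta(X')\ge(1+100\sqrt{\varepsilon_0})k_2 t/n$, the same averaging trick shows that $R$ alone contributes roughly $pt$ $R_G$-neighbours in the opposite side to each $V_i\in X'$, while $q$-cut-density of $G$ contributes a further $\approx 0.9qt$ neighbours. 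When $\max(pn,0.9qn)\ge(1+100\sqrt{\varepsilon_0})d$ this is enough, but in the genuinely tight regime $pn,qn<d$ the two contributions must be combined more carefully --- likely by pruning to the sub-collection of clusters in $X'$ whose $R$-degree is close to the $pt$ average and then separately accounting for the $G\setminus R$-edges. Making this quantitative while respecting the hierarchy $d^{-1}\ll\varepsilon_0\ll\eta_0\ll p,q,\varepsilon,\Delta^{-1}$ is where the main technical work lies.
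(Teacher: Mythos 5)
There is no proof to compare against in this paper: Lemma~\ref{lem:main_tree_embedding_lemma} is imported from~\cite{hyperstability} as a black box (see the remark at the start of Section~2). Judging your proposal on its own merits: the easy case $pn\ge d$ and the non-bipartite case are fine --- weighting each edge of $R$ by $1/(pn)$ and averaging over clusters is a clean way to manufacture a fractional matching of value $\ge(1+\varepsilon/4)d|R_G|/n$ in the reduced graph, and together with connectedness of $R_G$ (from cut-density via Fact~\ref{fact:cut_dense_preserved_by_regularity} and Lemma~\ref{lem:cut_dense_connected_cluster_graph}) and Lemma~\ref{lem:matching_from_fractional_matching}, hypothesis (2) of Lemma~\ref{lem:stein_tree_embedding} is met. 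The bookkeeping you leave implicit (losses from the exceptional set, irregular pairs, and intra-cluster edges) does close, because in the non-trivial case $pn<d$ one has $n<d/p$, so all error terms are $O((\varepsilon_0+\eta_0)n^2)=o(\varepsilon e(R))$.

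The bipartite case, however, is a genuine gap, and the repair you sketch cannot work. Hypothesis (1) of Lemma~\ref{lem:stein_tree_embedding} demands a set $X'$ of clusters each of reduced degree at least $(1+100\sqrt{\varepsilon_0})k_i|R_G|/n$ for one of the two parts $k_i$ of $T$; since $k_1\ge d/(2\Delta)$, even the weaker of the two orientations forces every cluster of $X'$ to have reduced degree at least roughly $\frac{d}{2\Delta}\cdot\frac{|R_G|}{n}$. The only degree lower bounds your hypotheses supply cluster-by-cluster are of order $p|R_G|$ (from the $R$-edges) and $q|R_G|$ (from cut-density), and in the regime $pn,qn\ll d/(2\Delta)$ --- which is fully consistent with $p,q,\Delta^{-1}\gg d^{-1}$ and $n\ge(2+\varepsilon)d$ --- both fall short for \emph{every} cluster on \emph{both} sides; for instance the reduced graph can itself be a sparse quasirandom bipartite graph of edge density well below $\frac{d}{2\Delta n}$, which is still $\Theta(q)$-cut-dense and compatible with containing a spanning $pn$-regular subgraph. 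So no amount of pruning or combining the two contributions produces the degree condition: it simply is not implied by the hypotheses. Note that the paper's own Lemma~\ref{lem:regularity_tree_embedding_nonbipartite} conspicuously stops short of embedding $T$ exactly when the reduced graph is bipartite, outputting a structural alternative instead; this is a strong hint that the bipartite case requires an embedding argument outside the scope of Lemma~\ref{lem:stein_tree_embedding} --- one that exploits the large matching in the connected bipartite reduced graph together with the fact (coming from the near-regular bipartite subgraph $R\cap G'$) that \emph{both} sides of $G'$ have at least $(1+\varepsilon/2)d$ vertices, so that the larger part of $T$ can be distributed over one side of the matching. Without such an argument the proof is incomplete.
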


The above lemma can be rephrased as the following lemma about matchings in the reduced graph.
\begin{lemma}\label{lem:2k_connected_matching_bound}
Let $M_0^{-1}, \Delta, \varepsilon, q\gg n^{-1}, k^{-1}$.
Let $T$ be a tree with $\Delta(T)\leq \Delta$ and $k$ vertices.
Let $G$ be a $q$-cut-dense graph and $R$ a $(\varepsilon, 5\sqrt{\varepsilon})$-reduced graph of $G$ with $|R|\leq M_0$. Suppose that $R$ has a matching $M$ with $e(M)\geq  (1+10\sqrt{\varepsilon})k|R|/n$ 
Then $G$ has a copy $T$.
\end{lemma}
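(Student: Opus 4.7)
The idea is to deduce Lemma~\ref{lem:2k_connected_matching_bound} from Lemma~\ref{lem:main_tree_embedding_lemma} by building, from the matching $M$, a $pn$-regular subgraph $H\subseteq G$ of order at least $(2+\sqrt{\varepsilon})k$. The matching corresponds to $e(M)\ge (1+10\sqrt{\varepsilon})k|R|/n$ pairwise vertex-disjoint $\varepsilon$-regular pairs $(V_i,V_j)$ in $G$, each of density $\ge 5\sqrt{\varepsilon}$ and cluster size $m:=n/|R|$; their union contains $2e(M)m\ge (2+20\sqrt{\varepsilon})k$ vertices, giving the required slack in the order count.

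Within each pair I would first \emph{super-regularize}: by $\varepsilon$-regularity at most $\varepsilon m$ vertices on each side have bipartite degree outside $[(d_{ij}-\varepsilon)m,(d_{ij}+\varepsilon)m]$, and deleting these and then equalizing the two sides yields $(V_i',V_j')$ with $m':=|V_i'|=|V_j'|\ge (1-3\varepsilon)m$ and minimum degree at least $(d_{ij}-5\varepsilon)m\ge 3\sqrt{\varepsilon}m$. Setting $r:=\lfloor 2\sqrt{\varepsilon}m\rfloor$, I claim each $(V_i',V_j')$ contains an $r$-regular bipartite spanning subgraph. This follows from the max-flow / Gale--Ryser criterion for bipartite $r$-factors, which asks that $e(S,V_j'\setminus T)\ge r(|S|-|T|)$ for every $S\subseteq V_i'$ and $T\subseteq V_j'$. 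For $|S|,|V_j'\setminus T|\ge \varepsilon m$ the $\varepsilon$-regularity inherited by slicing (as in Fact~\ref{fact:regularity_inheritance}) gives $e(S,V_j'\setminus T)\ge (d_{ij}-O(\varepsilon))|S||V_j'\setminus T|$, which dominates $r(|S|-|T|)$ after a short linear-in-$|T|$ case analysis; for the remaining small cases ($|S|<\varepsilon m$ or $|V_j'\setminus T|<\varepsilon m$) the condition follows from the min-degree bound $3\sqrt{\varepsilon}m>r$ by a direct edge count and (where needed) the observation that if $|T|>|S|$ the condition is trivial.

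Taking the vertex-disjoint union $H$ of these $r$-regular bipartite subgraphs yields an $r$-regular subgraph of $G$ with $|V(H)|=2e(M)m'\ge 2(1-3\varepsilon)(1+10\sqrt{\varepsilon})k\ge (2+\sqrt{\varepsilon})k$. Setting $p:=r/n\ge \sqrt{\varepsilon}/M_0$, which is $\gg k^{-1}$ by the hierarchy $\varepsilon,M_0^{-1}\gg k^{-1}$, we conclude that $H$ is a $pn$-regular subgraph of the $q$-cut-dense graph $G$ of order at least $(2+\sqrt{\varepsilon})k$. Applying Lemma~\ref{lem:main_tree_embedding_lemma} with parameters $(p,q,\sqrt{\varepsilon},\Delta)$ to $H\subseteq G$ then produces the desired copy of $T$. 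I expect the main obstacle to be the case analysis needed to verify the Gale--Ryser $r$-factor condition, particularly near the boundary $|S|,|T|\approx m'$, where one must combine $\varepsilon$-regularity with the super-regularized min-degree bound rather than use either in isolation.
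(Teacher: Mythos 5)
Your proposal is correct and shares the paper's high-level strategy: both proofs reduce the lemma to Lemma~\ref{lem:main_tree_embedding_lemma} by extracting, from the union of the matched $\varepsilon$-regular pairs, a $pn$-regular subgraph of order at least $(2+\Omega(\sqrt{\varepsilon}))k$. Where you differ is in how that regular subgraph is produced. The paper takes a \emph{maximal} $pn$-regular subgraph $H$ of the union of the matched pairs and argues indirectly: if $H$ missed more than an $\varepsilon$-fraction of some matched cluster, the leftover graph in that pair would still be dense enough that the R\"odl--Wysocka theorem (Theorem~\ref{thm:Rodl_Wosocka}) would produce another $pn$-regular subgraph disjoint from $H$, contradicting maximality; hence $H$ covers a $(1-\varepsilon)$-fraction of every matched cluster and has the required order. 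You instead build the regular subgraph directly: super-regularize each pair and extract an $r$-regular spanning bipartite subgraph via the max-flow/Gale--Ryser defect condition, then take the disjoint union. Your route is self-contained modulo the standard ``super-regular pairs contain regular spanning subgraphs'' lemma, which you only sketch --- the case analysis for $e(S,V_j'\setminus T)\ge r(|S|-|T|)$ when $|T|$ is close to $m'$ genuinely needs the min-degree bound and is the one place you would have to write out details --- whereas the paper's route outsources all such work to Theorem~\ref{thm:Rodl_Wosocka} at the cost of a less transparent maximality argument and a slightly lossier constant. Both yield the same conclusion, and your parameter bookkeeping ($p\ge\sqrt{\varepsilon}/M_0\gg k^{-1}$, order $\ge(2+\sqrt{\varepsilon})k$) checks out.
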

\begin{proof}
Let $M_0^{-1}, \Delta, \varepsilon, q\gg p\gg n^{-1}, k^{-1}$. 
Let $M'$ be the bipartite subgraph of $G$ consisting of only the pairs in $M$, and let $H$ be the largest $pn$-regular subgraph of $M'$.

Consider some pair $(A,B)\in M$. We have $e(G[A,B]\setminus H)\le  3(pn/|G[A,B]\setminus H|)^{1/3}|G[A,B]\setminus H|^2$, as otherwise Theorem~\ref{thm:Rodl_Wosocka} would give a $\lceil0.4 (3(pn/|G[A,B]\setminus H|)^{1/3})^3 |G[A,B]\setminus H|\rceil \ge pn$-regular subgraph disjoint from $H$ (contradicting maximality of $H$). Also note that $|H\cap A|=|H\cap B|$ because $H[A,B]$ is regular and bipartite with parts $A,B$. These show that $|H\cap A|, |H\cap B|\le \varepsilon n/|R|$, as otherwise the fact that $(A,B)$ is $\varepsilon$-regular of density $\ge 5\sqrt{\varepsilon}$  would give $e(|H\cap A|, |H\cap B|)\ge (1- \varepsilon) 5\sqrt{\varepsilon}|H\cap A| |H\cap B|\ge (1- \varepsilon) 5\sqrt{\varepsilon} (\varepsilon n/|R|)^2>3(pn/(2n/|R|))^{1/3}(2n/|R|)^2>3(pn/|G[A,B]\setminus H|)^{1/3}|G[A,B]\setminus H|^2$. 

Using the bound from the previous paragraph, we have that $|H|=\sum_{(A,B)\in M}(|H\cap A|+|H\cap B|)\ge (1-\varepsilon)(2n/|R|)e(M)\ge  (1-\varepsilon)(2n/|R|)(1+10\sqrt{\varepsilon})k|R|/n\ge (1+9\sqrt{\varepsilon})2k$. Thus Lemma~\ref{lem:main_tree_embedding_lemma} applies to give a copy of $T$.
\end{proof}

Via K\"onig's Theorem we get the following.
\begin{lemma}\label{lem:bipartite_regularity_cover}
Let $\Delta^{-1}, q, \gg \varepsilon\gg  k^{-1}$.
Let $T$ be a tree with $\Delta(T)\leq \Delta$ and $k$ vertices.
Let $G$ be a  bipartite graph which is $q$-cut-dense with parts $X,Y$ with no copy of $T$. Then $G$ has a subgraph $G'$ with $e(G')\ge e(G)-10\sqrt{\varepsilon}n^2$ that has a cover of order $\le (1+10\sqrt{\varepsilon})k$. 
\end{lemma}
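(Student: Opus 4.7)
The plan is to apply Szemer\'edi's Regularity Lemma to $G$, use the contrapositive of Lemma~\ref{lem:2k_connected_matching_bound} to bound the matching number of the reduced graph, and then invoke K\"onig's theorem to extract a small vertex cover. Pick constants $\Delta^{-1}, q \gg \varepsilon_1 \gg M_0^{-1} \gg k^{-1}, n^{-1}$ with $\varepsilon_1 \le \varepsilon/100$. Apply Theorem~\ref{thm:regularity_lemma} with initial partition $\{X, Y\}$, producing a subgraph $G^*$ of $G$ with an $(\varepsilon_1, 5\sqrt{\varepsilon_1})$-regularity partition $V_1, \dots, V_l$ (with $l \le M_0$) that refines $\{X, Y\}$; write $R$ for the reduced graph and $n^* := |V(G^*)|$.

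Since each $V_i$ lies entirely in $X$ or in $Y$ and $G$ is bipartite, every pair $(V_i, V_j)$ with both in the same side has density $0$, so $R$ is bipartite. By Fact~\ref{fact:cut_dense_preserved_by_regularity}, $G^*$ remains $(q-2(\varepsilon_1+5\sqrt{\varepsilon_1})) \ge q/2$-cut-dense. Since $G$ is $T$-free, so is $G^*$, and so the contrapositive of Lemma~\ref{lem:2k_connected_matching_bound} gives
\[
\nu(R) \;<\; (1+10\sqrt{\varepsilon_1}) \, k|R|/n^*.
\]
As $R$ is bipartite, K\"onig's Theorem produces a vertex cover $C \subseteq V(R)$ with $|C| = \nu(R) < (1+10\sqrt{\varepsilon_1}) \, k|R|/n^*$.

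Define $S := \bigcup_{V_i \in C} V_i \subseteq V(G)$ and let $G'$ be the subgraph of $G$ consisting of all edges with at least one endpoint in $S$. Then $S$ covers $G'$ and $|S| = |C| \cdot n^*/|R| < (1+10\sqrt{\varepsilon_1})k \le (1+10\sqrt{\varepsilon})k$, as required. The edges of $G$ missed by $G'$ come in two types: (a) edges of $G \setminus G^*$, of which there are at most $(2\varepsilon_1 + 5\sqrt{\varepsilon_1})n^2$ by the remark following Theorem~\ref{thm:regularity_lemma}; and (b) edges of $G^*$ between pairs $(V_i, V_j)$ with $V_i, V_j \notin C$. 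For each such pair, $V_iV_j \notin E(R)$ (because $C$ is a vertex cover of $R$), so the density is $< 5\sqrt{\varepsilon_1}$, contributing in total at most $\binom{|R|}{2} \cdot 5\sqrt{\varepsilon_1}(n^*/|R|)^2 \le \tfrac{5}{2}\sqrt{\varepsilon_1}\, n^2$ edges. Summing gives $e(G) - e(G') \le (2\varepsilon_1 + \tfrac{15}{2}\sqrt{\varepsilon_1})n^2 \le 10\sqrt{\varepsilon}\, n^2$ once $\varepsilon_1$ is chosen appropriately.

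This argument is a fairly routine regularity translation of a matching obstruction into a vertex-cover statement; the only subtlety — which isn't really a hard obstacle given the hierarchy $\Delta^{-1}, q \gg \varepsilon \gg k^{-1}$ — is keeping the error terms from both the regularity approximation and the sub-$5\sqrt{\varepsilon_1}$-density edges below the required $10\sqrt{\varepsilon}\, n^2$ threshold. The key input is the sharp matching bound of Lemma~\ref{lem:2k_connected_matching_bound}, which is what forces the $(1+o(1))k$ constant (rather than a larger constant) in the cover size.
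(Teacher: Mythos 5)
Your proof is correct and follows essentially the same route as the paper's: regularity lemma refining $\{X,Y\}$, the contrapositive of Lemma~\ref{lem:2k_connected_matching_bound} to bound $\nu(R)$, then K\"onig's Theorem and blowing the cover of $R$ back up to a cover in $G$. The only cosmetic differences are that the paper first dispatches the case $k\leq qn_0/4$ by greedy embedding, and that your error accounting separately bounds edges in low-density pairs (which under the paper's convention for regularity partitions have density $0$ anyway); neither affects correctness.
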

\begin{proof}
Let $\Delta^{-1}, q, \gg \varepsilon\gg m^{-1}\gg M^{-1}\gg  k^{-1}$.
Let $n_0:=|G|$. If $k\leq qn_0/4$, then note that Fact~\ref{fact:cut_dense_min_degree} gives $\delta(G)\geq 0.9qn$, and so we can greedily find $T$ using Fact~\ref{fact:greedy_tree_embedding}. Thus suppose $k> qn_0/4$.  
Use Szemer\'edi's Regularity Lemma to get a subgraph $G'$ of $G$ and a $(\varepsilon, 5\sqrt{\varepsilon})$-regularity partition with $s+t\in [m,M]$ parts  $X_1, \dots, X_s, Y_1, \dots, Y_t$  refining the partition $\{X, Y\}$, labelled so that $X_1, \dots, X_s\subseteq X, Y_1, \dots, Y_t\subseteq Y$. Let $n:=|G'|$, noting that $n\geq(1-\varepsilon)n_0$, which in particular gives $\Delta^{-1}, q, \varepsilon, m^{-1}, M^{-1}\gg n^{-1}$.
Let $R$ be the reduced graph of this partition. By Fact~\ref{fact:cut_dense_preserved_by_regularity}, $G'$ is $(q-2(\varepsilon+5\sqrt{\varepsilon}))\geq q/2$-cut-dense. By Lemma~\ref{lem:2k_connected_matching_bound},  $R$ has no matching of size $e(M)\ge (1+10\sqrt{\varepsilon})k|R|/n$. By K\"onig's Theorem, $R$ has a cover $C_R$ of order $\le (1+10\sqrt{\varepsilon})k|R|/n$. Let $C$ be the union of the parts giving $C_R$ to get a set of vertices with $|C|=|C_R|n/|R|\le  (1+10\sqrt{\varepsilon})(k|R|/n)(n/|R|)=(1+10\sqrt{\varepsilon})k$. Now $C$ covers all the edges of $G'$ and hence all but $(2\varepsilon+\sqrt{\varepsilon}) n^2\le 10\sqrt{\varepsilon}n^2$ of the edges of $G$ ($e(G)-e(G')\le (2\varepsilon+\sqrt{\varepsilon}) n^2$ by the remark after the statement of the Regularity Lemma). 
\end{proof}

Combining with Lemma~\ref{lem:bipartite_regularity_cover}, we can prove Theorem~\ref{optimal_theorem} for dense graphs.
\begin{lemma}\label{lem:thm_opt_dense}
Let $\Delta\gg \alpha\gg  k^{-1}$.
Let $T$ be a $k$-vertex tree with $\Delta(T)\leq \Delta$, and $G$ a $n$-vertex graph with no copies of $T$.
 Then  $G$ has a subgraph $H$ of $G$ with $e(H)\geq e(G)-3\alpha n^2$ whose connected components all have a cover of order $\le (1+\alpha)k$.
\end{lemma}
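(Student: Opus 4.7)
The plan is to combine the cut-dense decomposition of Lemma~\ref{lem:cut_dense_decomposition} with the reduction-to-cover lemmas already established in the paper (Lemmas~\ref{lem:regularity_tree_embedding_nonbipartite} and~\ref{lem:bipartite_regularity_cover}), applied separately on each cut-dense component. Choose constants $\Delta \gg \alpha \gg q \gg \delta, \varepsilon \gg k^{-1}$ compatible with the hierarchies required by those lemmas, and small enough that $100\sqrt{\varepsilon}$, $10\sqrt{\varepsilon}$, $q$, and $\delta$ are all $\le \alpha$.

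First I would apply Lemma~\ref{lem:cut_dense_decomposition} to $G$ with parameter $q$, deleting at most $qn^2 \le \alpha n^2$ edges and producing a spanning subgraph $G_0$ whose connected components $C_1, \dots, C_t$ are $q$-cut-dense (and each is still $T$-free). Then for each $C_i$ I would apply Lemma~\ref{lem:regularity_tree_embedding_nonbipartite}. In case (ii) this deletes $\le \delta |C_i|^2$ edges and produces sets $C_1^i, C_2^i \subseteq V(C_i)$ with $2|C_1^i|+|C_2^i|\le (1+100\sqrt{\varepsilon})k$ such that $C_1^i\cup C_2^i$ covers all remaining edges; so $C_1^i\cup C_2^i$ is already a cover of size $\le (1+\alpha)k$ and we are done with that component. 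In case (i) this produces instead a bipartite $q/2$-cut-dense $T$-free subgraph $H_i \subseteq C_i$, to which I would then apply Lemma~\ref{lem:bipartite_regularity_cover} to obtain a further subgraph $H_i'$ with a cover of size $\le (1+10\sqrt{\varepsilon})k \le (1+\alpha)k$, at the cost of deleting an additional $\le 10\sqrt{\varepsilon}|C_i|^2$ edges.

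Let $H$ be the union over $i$ of the $H_i$ (case (ii)) or $H_i'$ (case (i)). Each connected component of $H$ is contained in some $C_i$, and intersecting the cover constructed for the ambient $H_i$ or $H_i'$ with the vertex set of that component still gives a cover of the component, of size $\le (1+\alpha)k$. The total number of edges deleted from $G$ is bounded by
\[
qn^2 + \sum_i \bigl(\delta + 10\sqrt{\varepsilon}\bigr)|C_i|^2 \;\le\; \alpha n^2 + \bigl(\delta + 10\sqrt{\varepsilon}\bigr)n^2 \;\le\; 3\alpha n^2,
\]
using $\sum_i |C_i|^2 \le \bigl(\sum_i |C_i|\bigr)^2 \le n^2$ for the vertex-disjoint components.

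The argument is essentially a packaging of the existing tools, so I do not anticipate real combinatorial obstacles. The one thing needing care is the constant hierarchy: the cut-density $q$ produced in the first step has to remain comfortably larger than the $\varepsilon$ used by Lemmas~\ref{lem:regularity_tree_embedding_nonbipartite} and~\ref{lem:bipartite_regularity_cover} (both of which demand $q \gg \varepsilon$), while $\sqrt{\varepsilon}$ still must sit below $\alpha$ so that the final cover bound $(1+100\sqrt{\varepsilon})k$ is absorbed into $(1+\alpha)k$. Given the hierarchies already stated in the prerequisite lemmas, arranging $\Delta \gg \alpha \gg q \gg \delta, \varepsilon \gg k^{-1}$ suffices.
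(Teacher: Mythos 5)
Your proposal is correct and follows essentially the same route as the paper's own proof: decompose via Lemma~\ref{lem:cut_dense_decomposition}, apply Lemma~\ref{lem:regularity_tree_embedding_nonbipartite} to each cut-dense component, and handle case (i) with Lemma~\ref{lem:bipartite_regularity_cover}, summing the deleted edges over components. The only cosmetic difference is that the paper simply takes $q=\delta=\alpha$ rather than introducing separate smaller constants.
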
 
\begin{proof}
Let $\Delta\gg \alpha\gg\varepsilon\gg  k^{-1}$.
Apply Lemma~\ref{lem:cut_dense_decomposition} in order to delete $\alpha n^2$ edges and get a  subgraph $G'$ whose connected components $G_1, \dots, G_t$ are $\alpha$-cut-dense. For each $G_i$ apply Lemma~\ref{lem:regularity_tree_embedding_nonbipartite} (with $q,\delta=\alpha$) in order to find a subgraph $G_i'$ of $G_i$ with $e(G_i')\ge e(G_i)-\alpha |G_i|^2$ satisfying (i) or (ii). If $G_i'$ satisfies (ii), let $G_i''=G_i'$, and if it satisfies (i) then apply Lemma~\ref{lem:bipartite_regularity_cover} to $G_i'$ to get $G_i''$. We claim that in either case $G_i''$ has a cover of order  $(1+\alpha)k$. Indeed if $G_i'$ satisfied (ii), then $C_1\cup C_2$ is such a cover since $|C_1\cup C_2|\le 2|C_1|+|C_2|\le (1+100\sqrt{\varepsilon})k\le (1+\alpha)k$. If $G_i'$ satisfied (i), then  Lemma~\ref{lem:bipartite_regularity_cover} gave us a cover of order $\le (1+10\sqrt{\varepsilon})k\le (1+\alpha)k$.
Now $\bigcup G_i''$ satisfies the lemma since the total number of edges missing from it is $\le \alpha n^2+\sum (\alpha +10\sqrt{\varepsilon})|G_i|^2\le \alpha n^2+2\alpha \sum|G_i|^2\le \alpha n^2+2\alpha (\sum|G_i|)^2\le  \alpha n^2+2\alpha n^2=3\alpha n^2$. 
\end{proof}

Combining lemmas~\ref{lem:thm_opt_sparse} and~\ref{lem:thm_opt_dense}, we get the following.
\begin{lemma}\label{lem:thm_opt_combine}
Let $\Delta\gg \beta\gg  k^{-1}$.
Let $T$ be a $k$-vertex tree with $\Delta(T)\leq \Delta$.
Let $G$ be a graph with a cover $D$ of order $|D|\leq 10k$. If $G$ has no copy of $T$, then there is a subgraph $H$ of $G$ with $e(H)\geq e(G)-3\beta k n$ whose connected components all have a cover of order $\le (1+\beta)k$.
\end{lemma}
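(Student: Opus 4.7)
The plan is to combine Lemmas~\ref{lem:thm_opt_sparse} and~\ref{lem:thm_opt_dense} via the cut-dense decomposition of Lemma~\ref{lem:cut_dense_decomposition_of_dominated_graphs2}. Fix a constant hierarchy $\Delta \gg \beta \gg p \gg \varepsilon \gg \gamma \gg k^{-1}$ and set $C := 100/\beta$. The first step is to dispose of the small-$n$ case: if $n \le Ck$, I would apply Lemma~\ref{lem:thm_opt_dense} to $G$ directly with parameter $\alpha := \beta/(3C)$, so that the $3\alpha n^2$ edge loss is at most $3\alpha (Ck) n = \beta k n \le 3\beta k n$ and each component inherits a cover of size $(1+\alpha)k \le (1+\beta)k$. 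So I may assume $n > Ck$ throughout the rest.

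Next, I would reduce to a bipartite graph. Write $Y := V(G) \setminus D$. Delete the at most $\binom{10k}{2} \le 50 k^2 \le \beta k n/2$ edges inside $D$ (using $n > Ck$), and delete every $y \in Y$ with $d_G(y) < \varepsilon^{1/16} k/2$, losing a further $\le \varepsilon^{1/16} k n$ edges. Then apply Lemma~\ref{lem:cut_dense_decomposition_of_dominated_graphs2} with parameter $p$ to the remaining bipartite graph (taking its $X$ to be $D$, so $|X| \le 10k$); this costs another $200 p k n$ edges and yields vertex-disjoint subgraphs $G_1, \dots, G_t$ such that each $G_i$ contains a $p^{10^9 p^{-8}}$-cut-dense subgraph $D_i$ of order $\le 430 p^{-8} k$ with $V(G_i) \cap D \subseteq V(D_i)$. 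Because Fact~\ref{fact:cut_dense_min_degree} gives $\delta(D_i)$ at least a constant (depending on $p$) times $k$ --- certainly much more than $\varepsilon^{1/16} k/2$ --- and because the $Y$-side was cleaned up, we have $\delta(G_i) \ge \varepsilon^{1/16} k/2$ for every $i$.

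For each piece I would split on the size of $V(G_i) \cap Y$. If $|V(G_i) \cap Y| < 400 \varepsilon^{-1/2} k$, then $|V(G_i)|$ is bounded by some constant $C' = C'(p,\varepsilon)$ times $k$, so Lemma~\ref{lem:thm_opt_dense} applies to $G_i$ with a parameter $\alpha' \le \beta$ chosen small enough (depending on $p, \varepsilon$) to give each component a cover of size $\le (1+\beta)k$ while losing at most $3\alpha' (C')^2 k^2$ edges per piece; summing over the $O_p(1)$ pieces of this type and using $n > Ck$ keeps this within $\beta k n$. Otherwise $|V(G_i) \cap Y| \ge 400 \varepsilon^{-1/2} k$, and Lemma~\ref{lem:thm_opt_sparse} applies to $G_i$ with $X := V(G_i) \cap D$, $Y := V(G_i) \cap Y$, $D := D_i$; it outputs a subgraph $H_i$ of $G_i$ with $|V(H_i) \cap D| \le k_1 + 10 \gamma k \le (1 + 20 \gamma) k/2 \le (1+\beta) k$, which is itself a cover of $H_i$ since $H_i$ is bipartite with one side contained in $D$. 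The loss here is at most $2 \varepsilon^{1/16} k |V(G_i)|$ per piece, hence at most $2 \varepsilon^{1/16} k n$ in total.

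Taking $H := \bigcup_i H_i$, vertex-disjointness of the $G_i$ implies that each connected component of $H$ lies inside a single $H_i$ and hence inherits a cover of size at most $(1+\beta)k$; adding up the edge losses across all steps gives at most $3\beta k n$, as required. The main obstacle I foresee is the bookkeeping with the constant hierarchy --- especially verifying that the cut-density $q = p^{10^9 p^{-8}}$ of $D_i$ provided by Lemma~\ref{lem:cut_dense_decomposition_of_dominated_graphs2} is still large enough (relative to $\varepsilon$ and $\gamma$) to feed into the hypothesis $q \gg \varepsilon \gg \gamma$ of Lemma~\ref{lem:thm_opt_sparse}, and checking that the various $\varepsilon^{1/16} k n$, $\alpha' k^2$, and $p k n$ losses can simultaneously be squeezed below the $3 \beta k n$ budget by choosing $\varepsilon, \gamma$ sufficiently small compared to $\beta$ and $p$.
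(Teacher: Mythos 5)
Your proposal is correct and follows essentially the same route as the paper: handle small $n$ with Lemma~\ref{lem:thm_opt_dense}, strip the edges inside $D$, decompose via Lemma~\ref{lem:cut_dense_decomposition_of_dominated_graphs2}, and then treat each piece $G_i$ with Lemma~\ref{lem:thm_opt_sparse} or Lemma~\ref{lem:thm_opt_dense} according to the size of $V(G_i)\cap Y$. One small repair: deleting low-degree vertices of $Y$ beforehand does not by itself give $\delta(G_i)\ge \varepsilon^{1/16}k/2$, since a vertex can lose most of its edges in passing from $G$ to $G_i$; instead, the needed minimum degree follows directly from the clause $|N(v)\cap V(D_i)|\ge p^{10^6p^{-1}}k\ge \varepsilon^{1/16}k$ for all $v\in G_i$ in Lemma~\ref{lem:cut_dense_decomposition_of_dominated_graphs2}, which makes your cleanup step unnecessary.
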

\begin{proof}
Let $\Delta\gg \beta\gg p\gg q\gg \varepsilon\gg\gamma\gg n^{-1}, k^{-1}$.
Suppose $|G|\le \varepsilon^{-1} k$. Apply Lemma~\ref{lem:thm_opt_dense}, to get a subgraph $H$ with $e(H)\geq e(G)-3\gamma n^2$ whose connected components all have a cover of order $\le (1+\gamma)k$. Note that $3\gamma n^2\le 3\gamma \varepsilon^{-1} kn\le  3\beta kn$, and so this $H$ satisfies the lemma.

Suppose $|G|> \varepsilon^{-1} k$. Delete all $\leq 100k^2\leq 100\varepsilon kn\le \beta k n$ edges inside $D$ to get a bipartite graph $G[D,Y]$. Apply Lemma~\ref{lem:cut_dense_decomposition_of_dominated_graphs2} to delete $200p kn\leq \beta kn$ edges  in order to partition the remaining edges into  subgraphs $G_1, \dots, G_t$ such that each $G_i$ has a $p^{10^9p^{-8}}\ge q$-cut-dense subgraph $D_i$ with $|D_i|\leq  430p^{-8}k$ containing $X\cap G_i$ and 
 $|N(v)\cap V(D_i)|\geq p^{10^6p^{-1}}k\ge \varepsilon^{1/16}k$ for all $v\in G_i$.
 
For each $i$: if $|G_i\cap Y|\ge 400\varepsilon^{-1/2}k$ then apply Lemma~\ref{lem:thm_opt_sparse} to $G_i, D_i$ in order to get a subgraph $H_i$ with $e(H_i)\geq e(G_i)-4\varepsilon^{1/16}k|G_i|\ge e(G_i)-\beta k|G_i|$ and having a cover of order $\le (1+\gamma)k\le (1+\beta )k$. 
If $|G_i\cap Y|< 400\varepsilon^{-1/2}k$ then apply Lemma~\ref{lem:thm_opt_dense}  in order to get a subgraph $H_i$ with $e(H_i)\geq e(G_i)-\gamma |G_i|^2\ge e(G_i)-\gamma (400\varepsilon^{-1/2})k|G_i|\ge e(G_i)-\beta k|G_i|$ which has a cover of order $\le (1+\varepsilon)k\le  (1+\beta)k$.

Now $\bigcup H_i$ satisfies the lemma since the number of edges missing from this is $\le \beta kn +\beta kn+\beta k\sum |G_i|\le  3\beta kn$. 
\end{proof}
We now prove the main result of this section, which we restate below.
\begin{theorem*} Let $\Delta, \varepsilon\gg d^{-1}$. Let $T$ be a tree with $k$ edges and $\Delta(T) \leq \Delta$. For any graph $G$ with $e(G) \geq \varepsilon k |G|$ having no copies of $T$, it is possible to delete $\varepsilon e(G)$ edges to obtain a graph $H$, each of whose connected components has a cover of order $\leq (1 + \varepsilon)k$. \end{theorem*}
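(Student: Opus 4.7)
The plan is to use Theorem~\ref{main_theorem} as a black box to first reduce to graphs with a cover of order at most $3k \leq 10k$, and then apply Lemma~\ref{lem:thm_opt_combine} componentwise to refine those $3k$-sized covers down to $(1+\varepsilon)k$-sized covers. Pick an auxiliary constant $\beta$ in the hierarchy $\Delta \gg \varepsilon \gg \beta \gg k^{-1}$, chosen small enough that $3\beta \leq \varepsilon^2/2$ (for example $\beta := \varepsilon^2/6$).

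First, I would apply Theorem~\ref{main_theorem} to $G$ with parameter $\varepsilon/2$ (the hypothesis $e(G)\geq \varepsilon k|G|\geq (\varepsilon/2)k|G|$ is satisfied). This yields a subgraph $H' \subseteq G$ obtained by deleting at most $(\varepsilon/2)e(G)$ edges, such that every connected component of $H'$ admits a vertex cover of order $\leq 3k$. Let $C_1,\dots,C_t$ be the connected components of $H'$, and let $n_i := |C_i|$, so that $\sum_i n_i \leq |G|$.

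Next, for each $C_i$, I would apply Lemma~\ref{lem:thm_opt_combine} with the parameter $\beta$ above: the cover of $C_i$ has order $\leq 3k \leq 10k$, and $C_i$ is $T$-free (as a subgraph of $G$). This produces a subgraph $H_i \subseteq C_i$ with $e(H_i) \geq e(C_i) - 3\beta k n_i$, each of whose connected components admits a cover of order $\leq (1+\beta)k \leq (1+\varepsilon)k$. Setting $H := \bigcup_i H_i$, the connected components of $H$ are exactly the connected components of the various $H_i$ (since the $C_i$ are vertex-disjoint), so each has a cover of order at most $(1+\varepsilon)k$.

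Finally, I would check the edge count: the edges deleted in the second stage total at most
\[
\sum_{i} 3\beta k n_i \;\leq\; 3\beta k |G| \;\leq\; \tfrac{\varepsilon^2}{2}\, k |G| \;\leq\; \tfrac{\varepsilon}{2}\, e(G),
\]
using $e(G) \geq \varepsilon k |G|$. Adding the $(\varepsilon/2)e(G)$ edges removed by Theorem~\ref{main_theorem} gives the required total of at most $\varepsilon e(G)$. The only delicate point is coordinating the hierarchy: we need $\beta$ small enough against $\varepsilon$ to absorb the second-stage loss into $\varepsilon e(G)$, but still large enough against $k^{-1}$ for Lemma~\ref{lem:thm_opt_combine} to apply; the choice $\beta = \varepsilon^2/6$ works since $\varepsilon \gg k^{-1}$ (so $\varepsilon^2 \gg k^{-1}$ by adjusting the hierarchy function appropriately). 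Beyond this bookkeeping, the proof is essentially immediate from the two cited results, which is the point of the black-box approach advertised in the paper.
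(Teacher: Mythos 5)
Your proposal is correct and follows essentially the same route as the paper: apply Theorem~\ref{main_theorem} as a black box to get components with $O(k)$-sized covers, then apply Lemma~\ref{lem:thm_opt_combine} to each component and absorb the $O(\beta k|G|)$ second-stage loss into $\varepsilon e(G)$ via the hypothesis $e(G)\geq \varepsilon k|G|$. The only difference is cosmetic bookkeeping (the paper uses a single auxiliary constant $\gamma\ll \varepsilon$ for both stages rather than your $\varepsilon/2$ and $\beta=\varepsilon^2/6$), which does not affect correctness.
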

\begin{proof}
Pick $\Delta, \varepsilon\gg \gamma \gg d^{-1}$
Apply Theorem~\ref{main_theorem} in order to get a subgraph $G'$ with $e(G')\geq (1-\gamma)e(G)$ whose connected components $G_1, \dots, G_t$ have covers $D_1 \dots, D_t$ of order $10t$. To each $G_i$ apply Lemma~\ref{lem:thm_opt_combine} in order to get a subgraph $H_i$ having $e(H_i)\geq e(G_i)-3\gamma k |G_i|$, whose connected components have covers of order $\le (1+\gamma)k\le (1+\varepsilon)e(G)$. Now $\bigcup H_i$ satisfies the theorem because the number of missing edges is $\le \gamma e(G)+3\gamma k\sum |G_i|\le\gamma e(G)+ 3\gamma k|G|\le \gamma e(G)+ 3\gamma \varepsilon^{-1}e(G)=(\gamma+3\gamma \varepsilon^{-1})e(G)\le \varepsilon e(G)$.
\end{proof}

\subsubsection*{Acknowledgement}
The author would like to thank Kyriakos Katsamakis, Shoham Letzter, Benny Sudakov, Leo Versteegen, and Ella Williams for discussion about this project.
 
\bibliographystyle{abbrv} 
\bibliography{trees}  

\end{document}